\documentclass[dvipsnames,11pt,reqno,twoside,final]{amsart}
\usepackage[a4paper,left=3cm,right=3cm,top=3cm,bottom=3cm]{geometry}
\usepackage[T1]{fontenc}
\usepackage[utf8]{inputenc}
\usepackage{xcolor}
\usepackage{amssymb,mathtools}  
\usepackage{eufrak}
\usepackage{yfonts}
\usepackage{dsfont}
\usepackage{enumitem}
\usepackage{subcaption}
\usepackage{centernot}
\usepackage{booktabs}

\usepackage{preamble/mhequ}

\usepackage{amsthm}

\newcounter{counterEnvMain}
\newcounter{counterEnvDefault}
\numberwithin{counterEnvDefault}{section}

\theoremstyle{plain}

\newtheorem{lemma}[counterEnvDefault]{Lemma}
\newtheorem*{lemma*}{Lemma}

\newtheorem{theorem}[counterEnvDefault]{Theorem}
\newtheorem*{theorem*}{Theorem}

\newtheorem{proposition}[counterEnvDefault]{Proposition}
\newtheorem*{proposition*}{Proposition}

\newtheorem{corollary}[counterEnvDefault]{Corollary}
\newtheorem*{corollary*}{Corollary}

\newtheorem*{assumption*}{Assumption}

\theoremstyle{definition}

\newtheorem*{exercise*}{Exercise}

\newtheorem{definition}[counterEnvDefault]{Definition}
\newtheorem*{definition*}{Definition}

\newtheorem*{notation*}{Definition}

\newtheorem{remark}[counterEnvDefault]{Remark}
\newtheorem*{remark*}{Remark}

\newtheorem*{claim*}{Claim}

\newtheorem*{assertion*}{Assertion}

\usepackage{microtype}
\renewcommand\epsilon\varepsilon

\usepackage{hyperref}
\definecolor{colorlinks}{RGB}{0, 24, 168}
\definecolor{colorcites}{RGB}{0, 138, 118}
\hypersetup{
    colorlinks=true,
    linkcolor=colorlinks,
    citecolor=colorcites,
    urlcolor=colorlinks,
    pdfborder={0 0 0}
}

\usepackage{xargs}
\usepackage[colorinlistoftodos,prependcaption,textsize=tiny]{todonotes}

\newcommandx\work[2][1=]{\todo[linecolor=RoyalBlue,backgroundcolor=RoyalBlue!25,bordercolor=RoyalBlue,#1]{\textsc{todo} #2}}
\newcommandx\comment[2][1=]{\todo[linecolor=OliveGreen,backgroundcolor=OliveGreen!25,bordercolor=OliveGreen,#1]{\textsc{comment} #2}}
\newcommandx\mistake[2][1=]{\todo[linecolor=red,backgroundcolor=red!25,bordercolor=red,#1]{\textsc{mistake} #2}}
\newcommandx\improve[2][1=]{\todo[linecolor=orange,backgroundcolor=orange!25,bordercolor=orange,#1]{\textsc{improve} #2}}
\newcommandx\change[2][1=]{\todo[linecolor=yellow,backgroundcolor=yellow!25,bordercolor=yellow,#1]{\textsc{change} #2}}
\newcommandx\mem[2][1=]{\todo[linecolor=orange,backgroundcolor=orange!25,bordercolor=orange,#1]{\textsc{mem} #2}}
\newcommandx\status[2][1=]{\todo[linecolor=Blue,backgroundcolor=Blue!25,bordercolor=Blue,#1]{\textsc{Status} #2}}

\newcommand\lra{\longleftrightarrow}
\newcommand{\nlra}{\centernot\longleftrightarrow}

\newcommand{\eps}{\epsilon}

\newcommand\Var{\operatorname{Var}}

\newcommand{\sfw}{\mathsf{w}}
\newcommand{\e}{\mathrm{e}}
\newcommand{\om}{\omega}

\newcommand\ind[1]{\mathds{1}_{\{#1\}}}

\newcommand\bfi{\mathbf{i}}

\newcommand\C{\mathbb C}
\newcommand\D{\mathbb D}

\newcommand\R{\mathbb R}
\renewcommand\S{\mathbb S}

\newcommand\Z{\mathbb Z}

\newcommand\bbC{\mathbb C}
\newcommand\bbD{\mathbb D}

\newcommand\bbP{\mathbb P}

\newcommand\bbR{\mathbb R}
\newcommand\bbS{\mathbb S}

\newcommand\bbZ{\mathbb Z}

\newcommand{\sfQ}{\mathsf{Q}}

\newcommand\calB{\mathcal B}
\newcommand\calC{\mathcal C}
\newcommand\calD{\mathcal D}

\newcommand\calF{\mathcal F}

\newcommand\calK{\mathcal K}

\newcommand\calU{\mathcal U}

\newcommand\calW{\mathcal W}

\newcommand\calY{\mathcal Y}

\newcommand\frakC{\mathfrak C}

\newcommand\frakL{\mathfrak L}

\newcommand\frakc{\mathfrak c}
\newcommand\frakd{\mathfrak d}

\newcommand\frakf{\mathfrak f}
\newcommand\frakg{\mathfrak g}

\newcommand\frakm{\mathfrak m}

\newcommand\fraks{\mathfrak s}

\newcommand\fraky{\mathfrak y}

\newcommand{\sfP}{\mathsf{P}}

\mathtoolsset{showonlyrefs}

\makeatletter
\@namedef{subjclassname@2020}{\textup{2020} Mathematics Subject Classification}
\makeatother
\microtypesetup{expansion=false}

\title[Analyticity of the FK correlation length]{Analyticity of the correlation length in subcritical FK-percolation and Potts models}

\author{Lucas D'Alimonte}
\address{Institut Camille Jordan, Université Claude Bernard Lyon 1}
\email{dalimonte@math.univ-lyon1.fr}

\author{Loïc Gassmann}
\address{Université de Fribourg}
\email{loic.gassmann@unifr.ch}

\date{\today}
\keywords{%
    analyticity,
    FK-percolation, 
    cluster expansion, 
    Ising model,
    mass,
    correlation length.
}

\usepackage{tikz}

\begin{document}

\maketitle

\begin{abstract}

We prove that the correlation length of FK-percolation and Potts models on $\Z^d$ is analytic in the whole subcritical regime of parameters.
Our proof makes use of the identification of the inverse of the polar dual of the mass with the solution of an equation involving the leading eigenvalue of the \emph{FK-Ruelle operator}, introduced in the context of Ornstein--Zernike theory in Campanino, Ioffe and Velenik (Probab. Theory Relat. Fields, 2003), and crucially relies on Kato's theory of analytic perturbation of linear operators. 
As a consequence of this result, we prove that the correlation length of FK-percolation cannot be invariant by rotation except at countably many values of the temperature parameter in the subcritical phase.  
\end{abstract}

\setcounter{tocdepth}{2}

\makeatletter
\def\l@section{\@tocline{1}{0pt}{1em}{}{\bfseries}}
\def\l@subsection{\@tocline{2}{0pt}{4em}{}{}}
\makeatother

\makeatletter
\newcommand{\sectionstarwithouttoc}[1]{%
  \begingroup
  \let\addcontentsline\@gobblethree
  \section*{#1}%
  \endgroup
}
\makeatother

\tableofcontents

\section{Introduction}

\subsection{Preface}

This article is primarily concerned with the analyticity properties of the correlation length of the \emph{subcritical} FK-percolation measure. 
Background on the analyticity problem for off-critical thermodynamic quantities of statistical mechanics lattice models is already presented in the introduction of the companion paper~\cite{dalimonte_analyticity_2026}, thus we shall be briefer here. 
This article focuses on the FK-percolation model on the lattice $\Z^d$, for any dimension $d \geq 2$, in its subcritical phase.
Since the very influential work of~\cite{duminil-copin_sharp_2019}, it is known that the phase transition for this model is \emph{sharp}, that is to say, that the two-point function of the model decays exponentially fast in the whole subcritical regime.
The inverse of the exponential rate of decay of the two-point function is an important quantity in percolation theory, and is usually called the \emph{correlation length}.
The rate of decay itself is referred to as \emph{the mass} of the model.

\medskip
\noindent
As discussed in the introduction of~\cite{dalimonte_analyticity_2026}, understanding the regularity properties of thermodynamic quantities as functions of the temperature parameter is a problem of some significance in statistical mechanics, and some attention has been given to the behaviour of the mass of some classical models.
Some results in that direction have been obtained in the last fifty years; we try to summarize them in what follows.  
The first rigorous analysis of the analyticity properties of the mass in a context of a random surface model in a perturbative regime appeared in~\cite{abraham_random_1984}. 
This work also established the link between the analyticity problem and the celebrated \emph{Ornstein--Zernike theory}, which is a crucial ingredient in almost all the proofs that will be discussed below. Prior to that,~\cite{paes-leme_ornstein-zernike_1978} studied the analyticity properties of a different quantity related to the rate of decay of the Fourier transform of the two-point function, the \emph{complex mass shell}. 
Slightly later on, the mass of a related statistical mechanics model, the \emph{self-avoiding walk} model, has been proved to be analytic in the activity parameter in~\cite{chayes_random_1984,chayes_ornstein-zernike_1986} (see also the reference book~\cite{madras_self-avoiding_1993}). 
More related to this work, the case of Bernoulli percolation was treated in~\cite{campanino_gaussian_1991}, in which analyticity of the mass in the whole subcritical regime was established. 

\medskip
\noindent
In the case of correlated \emph{spin models}, however, the picture is less complete. 
One of the archetypical test grounds for such models is the classical \emph{Ising model}. 
In two dimensions, the Ising model is integrable, as proved in the groundbreaking work of Onsager~\cite{Onsager}. 
Its mass was exactly computed in~\cite{2D_Ising_mass}, and indeed defines an analytic function of the inverse temperature parameter in the subcritical regime. 
Related to two-dimensional integrability is the case of the free measure of the critical FK-percolation model, in two dimensions, with $q > 4$. 
In that case, the mass was shown to be non-zero, and more strikingly was also exactly computed using a rigorous implementation of the \emph{Bethe ansatz} ~\cite{duminil-copin_discontinuity_2021} and shown to define an analytic function of $q$ on the critical line for $q \in (4, +\infty)$. 
In dimensions $d \geq 3$, these integrability properties disappear, and the best results to this date are convergent power series expansions of the mass around $\beta = 0$~\cite{ocaroll_analyticity_1984-1,bricmont_frohlich_1985} and around $\beta = +\infty$~\cite{ocaroll_analyticity_1984-2,bricmont_frohlich_1985} (in the second regime, one needs to consider the exponential decay rate of~\emph{truncated correlations}). It is also worth mentioning that the latter perturbative expansions hold in more complicated settings than the Ising one, \emph{e.g.} in the context of lattice gauge theories~\cite{ocaroll_gluebal_1984}. The general picture for the Ising mass in dimensions greater than two is thus open, and the problem of proving its analyticity is classical, and for instance mentioned in the open problems section of the review article~\cite{ott_asymptotics_2023}. 

\medskip
\noindent
In this work, we prove that the correlation length of FK-percolation in $\Z^d$ for any $d\geq 2$ in a fixed direction is analytic as a function of the percolation parameter $p$ in the whole subcritical interval.
This result implies as a special case the analyticity of the correlation length of the Potts model for any $q \geq 2$, in any dimension $d \geq 2$. 
This is a consequence of the celebrated \emph{Edwards--Sokal coupling}, that will be recalled below.
Specializing to $q =2$ treats the case of the Ising model, and thus solves the conjecture of~\cite{ott_asymptotics_2023} mentioned above. 
We conclude this introduction by mentioning that in fact, we prove the stronger result that the correlation length is \emph{jointly} analytic in both the inverse temperature parameter and the direction in which it is computed.

\subsection{Definition of the model}

\subsubsection{FK-percolation}
This work is primarily concerned with a classical percolation model called \emph{FK-percolation} (sometimes referred to as \emph{random-cluster model}), that we now define. 
Let $d \geq 2$ be the dimension of the underlying lattice, and $p \in (0,1), q\geq 1$ be two parameters. 
We abuse notation by writing $\Z^d$ for the hypercubic lattice with set of vertices given by $\Z^d$ and nearest-neighbour edges for the $L^1$-norm on this lattice. 
Let $G = (V(G), E(G))$ be a subgraph of $\Z^d$. 
A \emph{percolation configuration} on $G$ is an element of $\{0,1\}^{E(G)}$, typically denoted by $\om$. 
For an edge $e \in E(G)$, we say that $e$ is \emph{open} if $\om_e=1$ and \emph{closed} otherwise. 
The number of open edges of a configuration $\om$ will be referred to as $o(\om)$. 

We now assume that $G$ is a \emph{finite} subgraph of $\Z^d$.
If $\om$ is a percolation configuration on $G$ and $\xi $ a percolation configuration on $\Z^d$,  write $k^\xi_G(\om)$ for the number of connected components of vertices of $V(G)$ in the subgraph with edges $\om|_{E(G)} \cup \xi|_{E(\Z^d)\setminus E(G)}$. Such connected components shall be called \emph{clusters}. 
The FK-percolation measure on $G$ with boundary conditions $\xi$ is now defined on the set of percolation configurations on $G$ as follows
\begin{equation}
\phi^\xi_{G,p,q}[\om] \coloneqq \frac{1}{\mathbf{Z}^\xi_{G,p,q}}\left(\frac{p}{1-p} \right)^{o(\om)}q^{k^\xi_G(\om)}.
\end{equation}
In the latter equation, $\mathbf{Z}^\xi_{G,p,q}$ is the constant normalizing the measure at 1, and is referred to as the \emph{partition function}. 
For $n \in \Z_{> 0}$, define $\Lambda_n \coloneqq \{-n, \dots, n\}^d$ seen as a subgraph of~$\Z^d$. For every vertex $x$ of $\bbZ^d$, we further define the box centered at $x$ by $\Lambda_n(x) \coloneqq x + \Lambda_n$, also viewed as a subgraph of $\bbZ^d$.
It is classical (see~\cite{grimmett_RCM}) that when the boundary conditions~$*$ are either \emph{free} (corresponding to $\xi \equiv 0$) or \emph{wired} (corresponding to $\xi \equiv 1$) then the sequence of measures $\phi^*_{\Lambda_n, p,q}$ converges to a percolation measure on $\Z^d$, called the free (resp. wired) measure in infinite volume, and denoted by $\phi^{0}_{p,q}$ (resp. $\phi^1_{p,q}$).

Denote by $\{0 \lra \infty\}$ the event that the vertex cluster of $0 \in \Z^d$ is infinite. 
The critical point $p_c$ is defined as follows :
\begin{equation}
p_c \coloneqq p_c(q,d) \coloneqq \sup \{p \in (0,1), \phi^1_{p,q}[0 \lra \infty] = 0 \}.
\end{equation}
It is notorious, and is the basic result of percolation theory, that the model enjoys a non-trivial \emph{phase transition}, that is to say, that $p_c \in (0,1)$ as long as $d \geq 2$ and $q\geq 1$.

Motivated by the quantitative decay of the two-point function of the FK-percolation in the subcritical regime, we introduce the following important quantity. 

\begin{definition*}[Mass]
Let $p \in (0,1), q\geq 1$ and $x \in \R^d$. 
Then the following\footnote{$\lfloor nx \rfloor$ denotes the point of the lattice which is the closest to $nx \in \R^d$. In case of equality, we break ties by arbitrarily choosing the first vertex in the lexicographical order.} limit exists in $\R_{\geq 0}$ and is called the \emph{mass} (or \emph{inverse correlation length}) in the direction given by $x$:
\begin{equation}
\frakm(p, x) \coloneqq \lim_{n \rightarrow +\infty} -\frac 1 n \log \phi^0_{p,q}[0 \lra \lfloor nx \rfloor]. 
\end{equation} 
\end{definition*}

One of the most celebrated results of the last decades of percolation theory is the proof of the \emph{sharpness} of the phase transition of FK-percolation~\cite{duminil-copin_sharp_2019}. 
In this article, the authors prove that for any $d\geq 2, q \geq 1, x \in \R^d\setminus \{0\}$, and any $p <p_c$, it is the case that  $\frakm(p,x) > 0$.
That is to say, the two-point function of FK-percolation decays exponentially fast at any subcritical value of the parameter $p$. 

\subsubsection{Potts model}

FK-percolation is intimately tied to a lattice spin model, called the Potts model, that we briefly introduce now. 
First set an integer $q \geq 2$ (the number of colours).
The spin space will be the set $\{1, \dots, q\}$. 
Given an inverse temperature $\beta > 0$,  a finite set $\Lambda \subset \Z^d$, and a \emph{boundary condition} $\tau \in \{0,1, \dots, q\}^{\bbZ^d}$, define the probability measure:
\begin{equation}
\mu^\tau_{\Lambda, \beta}[\sigma] \coloneqq \frac{1}{\mathbf{Z}^{\tau}_{\Lambda, \beta}} \exp\big( \beta \sum_{x\sim y \atop x,y \in\Lambda} \ind{\sigma_x = \sigma_y} + \beta \sum_{x\sim y \atop x \in \Lambda, y \notin\Lambda} \ind{\sigma_x = \tau_y}\big),
\end{equation}
where $x \sim y$ indicates that $x$ and $y$ are neighbouring vertices. The constant $\mathbf{Z}^{\tau}_{\Lambda, \beta}$ is the unique number normalizing the measure to 1, and is called the \emph{partition function}. 
We will mostly be interested in \emph{monochromatic} measures. 
For $j\in \{0, 1, \dots, q\}$, abbreviate $\mathbf{j}$ for the boundary condition constant equal to $j$ on $\Z^d$. 
The measure $\mu^{\mathbf{0}}_{\Lambda, \beta}$ will be referred to as \emph{free} in the volume $\Lambda$, while $\mu^{\mathbf{j}}_{\Lambda, \beta}$ with $j \neq 0$ will be referred to as \emph{monochromatic} in volume $\Lambda$. 
It is classical (see ~\cite{HDC_lecture_notes}) that all the measures $\mu^{\mathbf{j}}_{\Lambda, \beta}$ admit limits when $\Lambda \rightarrow \Z^d$, provided a suitable growth condition on the volumes $\Lambda$ (the sequence of boxes $(\Lambda_n)_{n \geq 1}$ satisfies this growth condition). 
Those limits will be referred to as $\mu^{\mathbf{j}}_{\beta}$.

The Potts measure also features a phase transition; there exists a non-trivial parameter $\beta_c \in (0,+\infty)$ such that when $\beta < \beta_c$, all the monochromatic measures in infinite volume coincide, while when $\beta > \beta_c$, they become distinct and mutually singular.  

Define the \emph{mass} of the model at temperature $\beta > 0$ associated to $x \in \R^d$ by
\begin{equation}
\frakm^{\mathsf{Potts}}(\beta, x) \coloneqq -\lim_{n\rightarrow \infty} \frac 1 n  \log\big(\mu^{\mathbf{0}}_{\beta}[\sigma_0 = \sigma_{\lfloor nx \rfloor}] -\tfrac 1 q\big).
\end{equation}

Again, the deep result of~\cite{duminil-copin_sharp_2019} implies that the phase transition of the Potts model is \emph{sharp}, namely that $\frakm^{\mathsf{Potts}}(\beta, x) > 0$ for any $\beta < \beta_c, x \in \R^d\setminus \{0\}$.  

\begin{figure}
	\begin{center}
	\includegraphics[width = 0.7\textwidth]{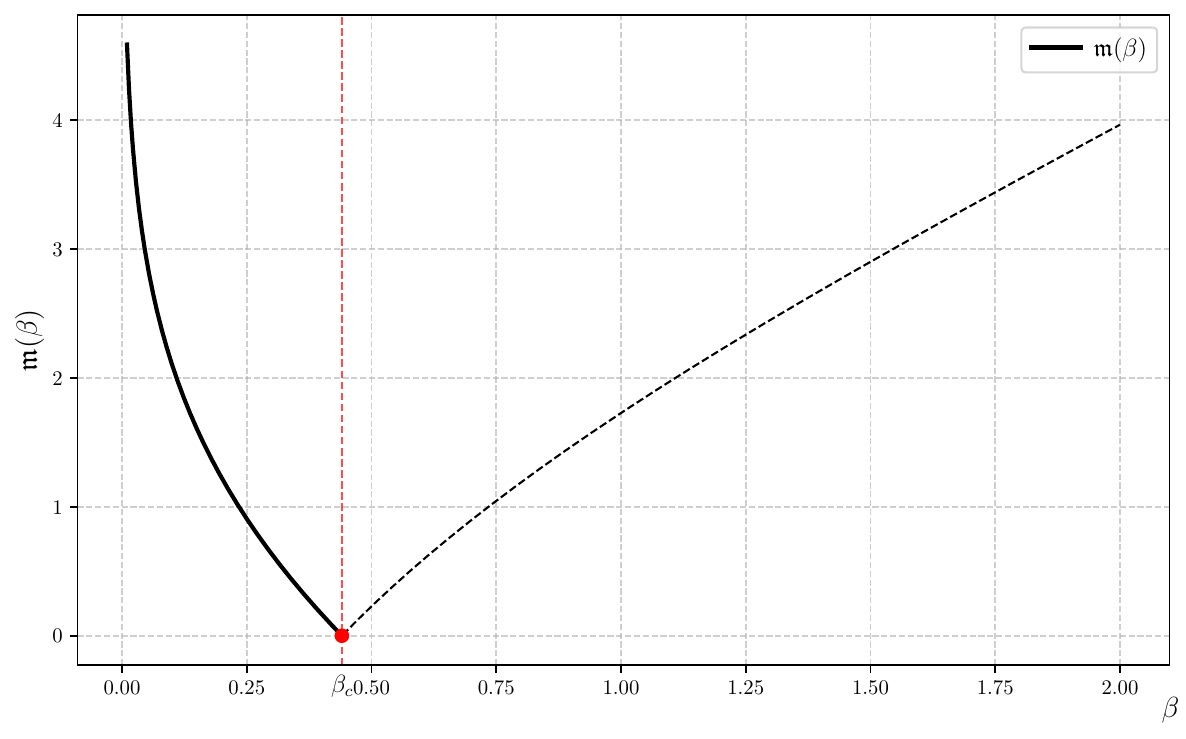}
	\caption{Graphical representation of the mass of the 2D Ising model in an axis direction, which is explicitly computable with the results of~\cite{2D_Ising_mass}. 
	The plain part of the curve corresponds to the high-temperature regime; we prove in this article that it is indeed analytic in the case of the Potts model, for any values of $q \geq 1$ and $d\geq 1$. 
	The dotted part corresponds to the rate of decay of truncated correlation functions, which is not covered by the results of this article. 
	Finally, the red dot corresponds to $\beta_c$, the unique point at which $\frakm(\beta_c) = 0$.
	 }
	\label{fig:mass_2D}	
	\end{center}
	\end{figure}

\subsubsection{Edwards--Sokal coupling}

It is well-known that when $q$ is an integer, the FK-percolation model with parameters $p \in (0,1)$, $q \in \Z_{\geq 2}$ and the Potts model with $q$ colours at inverse temperature $\beta(p) \coloneqq -\log(1-p) > 0$ can be coupled through the so-called \emph{Edwards--Sokal coupling} (again we refer to~\cite{HDC_lecture_notes} for a more extensive background). 
In particular, we will use the following very classical input:

\begin{lemma}[Edwards--Sokal coupling for the masses]
\label{lem:ES_coupling}
For any $p \in [0,1)$, $q \in \bbZ_{\geq 2}$, $x \in \R^d\setminus \{0\}$,
\begin{equation}\frakm(p,x) = \frakm^{\mathsf{Potts}}(\beta(p),x).\end{equation}
\end{lemma}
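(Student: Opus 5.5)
The plan is to prove the identity at the level of the two-point functions themselves, before taking logarithms and limits. The key relation is
\begin{equation}
\mu^{\mathbf{0}}_{\beta(p)}[\sigma_0 = \sigma_y] - \tfrac1q = \tfrac{q-1}{q}\,\phi^0_{p,q}[0 \lra y], \qquad y \in \Z^d,
\end{equation}
with the Potts model taken at $\beta(p) = -\log(1-p)$. Once this holds, we apply it with $y = \lfloor nx \rfloor$, take $-\frac1n\log$ of both sides and let $n \to \infty$. The constant factor $\frac{q-1}{q} > 0$ contributes $-\frac1n\log\frac{q-1}{q} \to 0$, so the two limits coincide. The existence of the limit on the FK side is granted by the Definition of the mass, so the Potts limit exists as well and equals $\frakm(p,x)$. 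The degenerate case $p=0$ is trivial: $\beta = 0$, both two-point quantities vanish for $y \neq 0$, and both masses are $+\infty$ under the natural convention.

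\textbf{Step 1: finite volume.} Fix $\Lambda = \Lambda_n$ and construct the Edwards--Sokal joint measure on pairs $(\sigma,\om)$, with spins in $\{1,\dots,q\}^{\Lambda}$ and edges in $\{0,1\}^{E(\Lambda)}$, for free boundary conditions. Its weight is proportional to
\begin{equation}
\prod_{e=xy} \big[(1-p)\ind{\om_e=0} + p\,\ind{\om_e=1}\ind{\sigma_x=\sigma_y}\big].
\end{equation}
We check the two marginals by direct summation. Summing over $\om$ gives weight $\prod_e (1-p)\,e^{\beta\ind{\sigma_x=\sigma_y}}$, which is the Potts measure $\mu^{\mathbf{0}}_{\Lambda,\beta}$ precisely because $e^{-\beta}=1-p$. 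Summing over $\sigma$ gives $p^{o(\om)}(1-p)^{|E|-o(\om)}q^{k(\om)}$, which is $\phi^0_{\Lambda,p,q}$.

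The conditional law of $\sigma$ given $\om$ assigns independent uniform colours to the clusters of $\om$. Hence $\sigma_0=\sigma_y$ almost surely on the event $\{0\lra y\}$, while on its complement the two spins agree with probability $\frac1q$. This yields
\begin{equation}
\mu^{\mathbf{0}}_{\Lambda,\beta}[\sigma_0=\sigma_y] = \phi^0_{\Lambda,p,q}[0\lra y] + \tfrac1q\big(1-\phi^0_{\Lambda,p,q}[0\lra y]\big),
\end{equation}
which is the displayed identity in volume $\Lambda$.

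\textbf{Step 2: infinite volume.} We let $n\to\infty$ along the boxes $\Lambda_n$. The left-hand side converges to the $\mu^{\mathbf{0}}_\beta$ quantity by the convergence of Potts measures recalled in the paper. The right-hand side converges because $\{0\lra y\}$ is an increasing event that depends on infinitely many edges. This needs a small argument: approximate it by the local events $\{0\lra y \text{ in } \Lambda_m\}$, and use that the free measures $\phi^0_{\Lambda_n}$ are stochastically increasing in $n$. Then $\phi^0_{\Lambda_n}[0\lra y]$ is nondecreasing in $n$, and its limit equals $\phi^0_{p,q}[0\lra y]$ by monotone convergence in $m$.

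The main point of care is this passage to the limit, together with checking that the paper's "free" Potts boundary condition $\mathbf{0}$ (colour $0$ coupling to no spin) corresponds exactly to the free FK boundary condition. That correspondence holds because boundary terms with $\tau_y = 0$ never contribute, since $\sigma_x \in \{1,\dots,q\}$.
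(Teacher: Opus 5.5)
Your proof is correct and is the classical Edwards--Sokal argument that the paper invokes without writing out; it only cites the coupling as standard. The finite-volume two-point identity $\mu^{\mathbf{0}}_{\beta(p)}[\sigma_0=\sigma_y]-\tfrac1q=\tfrac{q-1}{q}\,\phi^0_{p,q}[0\lra y]$, passed to infinite volume by monotonicity in $n$ and in $m$, gives the equality of the masses after taking $-\tfrac1n\log$, and your treatment of the non-local event $\{0\lra y\}$, of the free boundary condition $\mathbf{0}$, and of the case $p=0$ supplies the details the citation leaves implicit.
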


In particular, as $p \mapsto \beta(p)$ is analytic, analyticity properties of $\frakm^{\mathsf{Potts}}$ will immediately follow from the analyticity properties of $\frakm$ by composition. 

\subsection{Exposition of the results}
\subsubsection{Analyticity of the correlation length}

We are now ready to state our main results. 
Recall that a function $f : U \mapsto \R$ defined on an open set $U \subset \R^m$ is said to be \emph{analytic} on $U$ if it can be expanded as a convergent multivariate power series at any $u \in U$. 
That is to say, for any $u = (u_1, \dots, u_m) \in U$, there exist coefficients $a_{n_1, \dots, n_m} \in \R$ and an open neighbourhood $V$ of $u$ such that for any $x = (x_1, \cdots, x_m) \in V$,
\[
f(x) = \sum_{n_1, \dots, n_m \geq 0}a_{n_1, \dots, n_m} (x_1 - u_1)^{n_1}\cdots (x_m - u_m)^{n_m}. 
\]
We will sometimes use the terminology \emph{jointly analytic} instead of \emph{analytic} to emphasize on the multivariate setting. 

\begin{theorem}\label{thm:analyticity_corr_length}
For any dimension $d \geq 2$ and any cluster weight $q\geq 1$, the function
\[
(p, x) \mapsto \frakm(p, x)
\]
is jointly analytic on $(0, p_c) \times \R^d\setminus \{0\}$.
\end{theorem}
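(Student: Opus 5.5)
The approach follows the strategy announced in the abstract: characterize the polar dual of the mass through the leading eigenvalue of a transfer (Ruelle) operator built from the Ornstein--Zernike renewal structure of subcritical clusters, and then apply Kato's analytic perturbation theory together with the implicit function theorem. Since $\frakm(p,\cdot)$ extends to a norm on $\R^d$ (subadditivity and lattice symmetries), it suffices to prove joint analyticity of $(p,t)\mapsto \frakm(p,t)$ on $(0,p_c)\times\S^{d-1}$ locally, and then use $1$-homogeneity. Fix $p_0<p_c$ and a direction $s_0$. Let $\calK_p=\{t: \frakm(p,\cdot)^*(t)\le 1\}$ be the unit ball of the polar dual norm. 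For $t$ dual to $s_0$ (so $t\cdot s_0=\frakm(p_0,s_0)$, $t\in\partial\calK_{p_0}$) we want to realize $\partial\calK_p$ near $t$ as the zero set of an analytic function $F(p,h)=0$, $h\in\R^d$.

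\textbf{Step 1 (renewal structure).} Using sharpness of the phase transition~\cite{duminil-copin_sharp_2019} and the Campanino--Ioffe--Velenik coarse-graining, I will decompose the two-point connection $\{0\lra y\}$ under $\phi^0_{p,q}$ (with $y$ in a cone around $s_0$) into a concatenation of irreducible ``diamond'' pieces. Because FK-percolation is not a product measure, the weights of successive pieces interact. However, the dependence decays exponentially, and this can be encoded as a Gibbs-type measure on sequences of animals. This yields the FK-Ruelle operator $\frakL_{p,h}$ acting on a Banach space of Hölder-type functions of semi-infinite animal sequences, with weights $e^{h\cdot X(\gamma)}$ attached to the displacements. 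The key identity, proved as in CIV, is that $h\in\partial\calK_p$ (near $t$) if and only if the spectral radius $\lambda(p,h)$ of $\frakL_{p,h}$ equals $1$. The operator has a spectral gap, with $\lambda(p,h)$ a simple isolated eigenvalue; this follows from a Ruelle--Perron--Frobenius theorem for exponentially mixing interactions.

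\textbf{Step 2 (analyticity of the operator).} I will show that $(p,h)\mapsto\frakL_{p,h}$ extends to a holomorphic family of bounded operators on a complex neighbourhood of $(p_0,t)$. The $h$-dependence is harmless because the exponential tails of the displacements give room for complex $h$. The $p$-dependence is the delicate part. Each kernel entry is a ratio of FK partition functions, or equivalently a conditional probability of a local pattern given the past, and one must control its complex extension uniformly in the length of the conditioning past. My plan is to write these weights via a cluster/polymer expansion of the boundary effects, convergent for complex $p$ close to $p_0$. Convergence relies on the exponential decay of boundary influence (ratio weak mixing) in the subcritical regime, which I expect to take from the companion paper~\cite{dalimonte_analyticity_2026}. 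The coarse-graining scale is chosen depending on $p_0$ so that the pieces are large and the expansion parameters small. This uniform complex control of the kernel, including that the Hölder norms stay bounded, is the main obstacle.

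\textbf{Step 3 (Kato and implicit function).} Given a holomorphic family with a simple isolated eigenvalue at $(p_0,t)$, Kato's theory makes $\lambda(p,h)$ analytic nearby. Moreover $\nabla_h\lambda\ne0$, since $\partial_h\lambda$ equals an expected displacement times $\lambda$, which is nonzero in the direction $s_0$. The analytic implicit function theorem then parametrizes $\partial\calK_p$ near $t$ as an analytic graph in $(p,\text{angle})$, and the boundary is strictly convex with positive curvature near $t$. Finally, $\frakm(p,s)=\sup_{h\in\calK_p}h\cdot s$ is attained at the point of $\partial\calK_p$ whose normal is $s$. By the nondegeneracy of the Hessian (positive curvature), this point depends analytically on $(p,s)$, again by the implicit function theorem. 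Substituting it into $h\cdot s$ gives joint analyticity of $\frakm$ near $(p_0,s_0)$, which proves Theorem~\ref{thm:analyticity_corr_length}.
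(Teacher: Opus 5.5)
Your architecture is the paper's: the FK--Ruelle operator on chains of irreducible clusters, the identity $\rho=1$ on $\partial\calW_p$, the Campanino--Ioffe--Velenik Ruelle--Perron--Frobenius theorem, several-variable Kato theory, a nonvanishing radial derivative, the implicit function theorem, and a final polar-duality step using positive curvature.

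\textbf{The gap is Step~2.} It is the entire new content of the paper (Theorem~\ref{thm:complex_reg_ruelle}), and you leave it as a plan that lacks the ideas making it work.

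\emph{First, log control of the kernel.} You cannot simply complexify ``conditional probabilities of local patterns given the past'' and take logarithms. For events that are unions of many patterns, complex probabilities vanish at parameters $p+z_n$ with $|z_n|=O(1/n)$, where $n$ is the size of the support (Appendix~\ref{sec:counter_example}). The kernel involves pieces of unbounded size, and the companion paper's analytic extensions give no relative logarithmic bounds. The paper instead uses the spatial Markov property to write $\psi^s_p(\bar\frakc)=\log\phi^0_{\bar\frakc_1}[\mathsf{Ins}(\bar\frakc_1)]+\log\phi^0_{G_{\bar\frakc}}[\mathsf{Out}(\bar\frakc_1)]+\langle s,X(\bar\frakc_1)\rangle$. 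Both events are \emph{unique-decoupling}, so $\phi_{p+z}[A]/\phi_p[A]$ equals $\alpha_z^{o(\omega_A)}$ times a ratio of partition functions on $H$ and $H\setminus\mathsf{Supp}_A$. A polymer expansion valid uniformly on arbitrary subgraphs then gives $|\log\phi^0_{G,p+z}[A]-\log\phi^0_{G,p}[A]|\le c_\eps|\mathsf{Supp}_A|$ with $c_\eps\to0$. This forces the renewal pieces to be microscopic irreducible clusters, i.e.\ single patterns. Coarse-grained ``large pieces'' would be unions of patterns, exactly the bad case. The only large scale is the renormalisation scale of the polymers.

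\emph{Second, the H\"older bound.} Convergence of an expansion is not enough here. The polymer weights must be \emph{subgraph-compatible}, depending only on $\mathsf{Span}(\gamma)\cap G$. Then, for two pasts $G,G'$, only polymer clusters reaching both $\mathsf{Supp}_A$ and $E(G)\Delta E(G')$ survive in the difference, which gives the complex mixing estimate.

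\emph{Third, the complex mass gap.} You never address summability of $|e^{\psi}|$, and without it the complexified operator is not even bounded, so Kato cannot be invoked. The complex perturbation costs $e^{c_\eps|\bar\frakc_1|}$ in the \emph{volume} of the piece, which no tilt of $h$ absorbs directly. The paper splits according to $|\bar\frakc_1|\gtrless K\langle t,X(\bar\frakc_1)\rangle$. On one side it uses exponential tails of $|\calC_0|$; on the other, the real mass gap at $(1+c_\eps K)t$.

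\textbf{Two smaller points.} The state space (cones and irreducibility built from $\frakm(p_0,\cdot)$) must be frozen at $p_0$, so that all $\frakL_{p,h}$ act on one Banach space. The identity $\lambda(p,h)=1$ for $h\in\partial\calK_p$ with $p\neq p_0$ then requires the frozen cone to still carry the exponential rate. That uses continuity of $\frakm$ in $p$ (Lemma~\ref{lem:mass_continuous}).

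Also, the implicit function theorem only makes $\partial\calK_p$ an analytic hypersurface. The positive curvature you need in the last step comes from nondegeneracy of the Hessian of $\log\lambda$ in $h$, i.e.\ a nondegenerate covariance of the displacements. The paper imports this from \cite[Theorem B]{campanino_fluctuation_2008}.

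Your first-order-perturbation argument for $\nabla_h\lambda\neq0$ is fine. The paper instead uses the monotonicity comparison $\frakL_{\psi^{(1+\eta)t}_p}h\ge e^{\varsigma\eta}\frakL_{\psi^t_p}h$.
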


\begin{remark}
Note that the analyticity in the $x$ variable, at a fixed value of $p < p_c$ was already established \cite{campanino_fluctuation_2008}. 
\end{remark}

Lemma~\ref{lem:ES_coupling} then implies the following.

\begin{corollary}
For any dimension $d \geq 2$ and any color number $q \in \Z_{\geq 2}$, the function
\[
(\beta,x)   \mapsto \frakm^{\mathsf{Potts}}(\beta, x)
\]
is jointly analytic on $(0, \beta_c) \times \R^d\setminus\{0\}$.
\end{corollary}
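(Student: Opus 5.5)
The plan is to deduce the corollary from Theorem~\ref{thm:analyticity_corr_length} and Lemma~\ref{lem:ES_coupling} by an analytic change of variables in the first coordinate.

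Fix $q \in \Z_{\geq 2}$ and $d\geq 2$. The map $p \mapsto \beta(p) = -\log(1-p)$ is a real-analytic bijection from $(0,1)$ onto $(0,+\infty)$. Its inverse $\beta \mapsto p(\beta) = 1-\e^{-\beta}$ is also real-analytic, indeed entire. The first step is to check that this bijection sends $(0,p_c)$ onto $(0,\beta_c)$, that is, $\beta(p_c)=\beta_c$. This is the standard identification of the two critical points through the Edwards--Sokal coupling. Under that coupling, the free or monochromatic Potts measure at inverse temperature $\beta(p)$ is the colouring of the clusters of $\phi^0_{p,q}$ or $\phi^1_{p,q}$. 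Consequently, the monochromatic Potts states coincide precisely when $\phi^1_{p,q}[0\lra\infty]=0$. Since $\beta$ is increasing, this gives $\beta_c = \beta(p_c)$. Alternatively, one can read the identification directly from Lemma~\ref{lem:ES_coupling} combined with sharpness: $\frakm(p,x)>0$ exactly for $p<p_c$, and $\frakm^{\mathsf{Potts}}(\beta,x)>0$ exactly for $\beta<\beta_c$.

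Next, define $\Phi(\beta,x) \coloneqq (p(\beta), x)$. This is a real-analytic map from $(0,\beta_c)\times\R^d\setminus\{0\}$ onto $(0,p_c)\times\R^d\setminus\{0\}$. By Lemma~\ref{lem:ES_coupling}, for every $\beta\in(0,\beta_c)$ and $x\neq 0$,
\[
\frakm^{\mathsf{Potts}}(\beta,x) = \frakm\big(p(\beta),x\big) = (\frakm\circ\Phi)(\beta,x).
\]
By Theorem~\ref{thm:analyticity_corr_length}, $\frakm$ is jointly analytic on $(0,p_c)\times\R^d\setminus\{0\}$. A composition of real-analytic maps between open subsets of Euclidean spaces is real-analytic: locally one substitutes one convergent power series into another, or equivalently extends both maps holomorphically to small complex polydiscs and composes them there. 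This yields joint analyticity of $\frakm^{\mathsf{Potts}}$ on $(0,\beta_c)\times\R^d\setminus\{0\}$.

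There is no real obstacle here. The only point requiring care is the matching of the critical points $\beta_c=\beta(p_c)$, so that the domains correspond exactly; this is classical and is handled as above. All the analytic difficulty lies in Theorem~\ref{thm:analyticity_corr_length} itself.
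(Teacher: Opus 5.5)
Your proposal is correct and is exactly the paper's argument: the corollary follows by composing Theorem~\ref{thm:analyticity_corr_length} with the Edwards--Sokal identity of Lemma~\ref{lem:ES_coupling}. You also make explicit two points the paper leaves implicit: that one composes with the analytic inverse $\beta\mapsto 1-\e^{-\beta}$, and that $\beta_c=\beta(p_c)$, which you check through the colouring of $\phi^1_{p,q}$ (your alternative justification via Potts sharpness is slightly circular, since that statement already presupposes this identification, but your first argument suffices).
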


\subsubsection{Duality in two dimensions}

In two dimensions, we can also draw some conclusions regarding the \emph{truncated mass} of FK-percolation, defined as follows, for $x \in \R^d\setminus \{0\}$:
\begin{equation}
\frakm^{\mathsf{f}}(p,x) \coloneqq -\lim_{n\rightarrow \infty} \frac 1 n \log \phi^{1}_{p,q}[ 0 \lra \lfloor nx \rfloor, 0 \nlra \infty] \in \R_{\geq 0}.
\end{equation}
While it is not currently known\footnote{Except in the special case of the FK-Ising measure obtained when $q=2$, see~\cite{duminil-copin_exponential_2020}.} whether $\frakm^{\mathsf{f}}(p,x) > 0$ for any $p \neq p_c$ in general dimension $d \geq 2$, the case of dimension 2 is special, due to planar duality. 
Indeed, denote by $p^*$ the dual parameter of $p$, that is, the unique solution of the equation
\begin{equation}pp^* = q(1-p)(1-p^*). \end{equation}
Then, it was proved in~\cite{borgs_covariance_1996} (see also~\cite{dalimonte_entropic_2024}) that for any $q \geq 1$, when $d=2$, it is the case that 
\begin{equation} \frakm^\mathsf{f}(p,x) = 
\begin{cases}
\frakm(p,x) &\text{ if } p<p_c \\
2\frakm(p^*,x) &\text{ if }p>p_c.
\end{cases} 
\end{equation}
This proves the following corollary in dimension 2:

\begin{corollary}\label{cor:truncated_corr_lenght_RCM}
In dimension $d=2$, the truncated mass
\begin{equation} (p,x) \mapsto \frakm^{\mathsf{f}}(p,x)\end{equation}
is jointly analytic and strictly positive on $(0,1)\setminus\{p_c\} \times \R^d \setminus \{0\}$.
\end{corollary}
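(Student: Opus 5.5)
The corollary follows from Theorem~\ref{thm:analyticity_corr_length} and the duality identity of~\cite{borgs_covariance_1996} stated just above. I would treat the two regimes $p<p_c$ and $p>p_c$ separately. Since $(0,1)\setminus\{p_c\}$ is the disjoint union of the open intervals $(0,p_c)$ and $(p_c,1)$, joint analyticity on the union is equivalent to joint analyticity on each of the open sets $(0,p_c)\times\R^2\setminus\{0\}$ and $(p_c,1)\times\R^2\setminus\{0\}$. Analyticity is a local property, so this reduction is harmless.

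\emph{Subcritical regime.} For $p<p_c$, the identity gives $\frakm^{\mathsf f}(p,x)=\frakm(p,x)$. Joint analyticity on $(0,p_c)\times\R^2\setminus\{0\}$ is then exactly Theorem~\ref{thm:analyticity_corr_length} with $d=2$. Strict positivity is the sharpness result of~\cite{duminil-copin_sharp_2019}: $\frakm(p,x)>0$ for $p<p_c$ and $x\neq 0$.

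\emph{Supercritical regime.} For $p>p_c$, the identity gives $\frakm^{\mathsf f}(p,x)=2\frakm(p^*,x)$. Solving $pp^*=q(1-p)(1-p^*)$ yields
\[
p^* = \frac{q(1-p)}{p+q(1-p)},
\]
which is a rational function of $p$. Its denominator is positive on $(0,1)$ since $q\ge 1$, so $p\mapsto p^*$ is real-analytic on $(0,1)$. It is a decreasing involution mapping $(0,1)$ onto itself.

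The step needing some care is to check that $p^*\in(0,p_c)$ whenever $p\in(p_c,1)$. This uses that, for $d=2$ and $q\ge1$, $p_c$ is the self-dual point $p_{sd}=\sqrt q/(1+\sqrt q)$, i.e.\ $p_c^*=p_c$ (Beffara--Duminil-Copin). Since $p\mapsto p^*$ is a decreasing involution fixing $p_c$, it maps $(p_c,1)$ onto $(0,p_c)$.

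Then $(p,x)\mapsto (p^*,x)$ is an analytic map from $(p_c,1)\times\R^2\setminus\{0\}$ into $(0,p_c)\times\R^2\setminus\{0\}$. Composing with the jointly analytic function $\frakm$ from Theorem~\ref{thm:analyticity_corr_length} gives an analytic function, since compositions of real-analytic maps are real-analytic. Multiplying by $2$ preserves analyticity. Strict positivity again follows from sharpness applied at $p^*<p_c$.

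No step is a genuine obstacle. The only external input beyond Theorem~\ref{thm:analyticity_corr_length} and the displayed duality formula is the identification $p_c=p_{sd}$ in two dimensions, which I take from the literature.
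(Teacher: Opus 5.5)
Your proposal is correct and follows the same route as the paper, which obtains the corollary directly from Theorem~\ref{thm:analyticity_corr_length} together with the duality identity of~\cite{borgs_covariance_1996}. You make explicit the details the paper leaves implicit: the real-analyticity of $p\mapsto p^*$, the fact that $p^*\in(0,p_c)$ for $p\in(p_c,1)$ via $p_c=\sqrt q/(1+\sqrt q)$ (which the paper also uses in its appendix), and strict positivity from sharpness.
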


In the case of two-dimensional Potts model, such a duality principle maps the mass at a subcritical parameter towards the so-called \emph{surface tension} of the model. 
This quantity is defined as follows.
For any unit vector $\mathbf{n} \in \bbS^1$ satisfying $\mathbf{n}_1 \geq 1/\sqrt 2$, define the following \emph{Dobrushin boundary condition}: 
\begin{equation} \tau^{\mathbf{n}}(y) = 
\begin{cases}
1 &\text{if }\langle {\mathbf{n}}^\perp, y \rangle \geq 0, \\
2  &\text{if }\langle {\mathbf{n}}^\perp, y \rangle < 0,
\end{cases}
 \end{equation}
 where $\langle \cdot,\cdot \rangle$ is the Euclidean scalar product on $\R^2$ and ${\mathbf{n}}^\perp = (-\mathbf{n}_2, \mathbf{n}_1)$.  
The surface tension of the model is then defined as 
\begin{equation} 
\sigma^{\mathbf{n}}(\beta) \coloneqq 
\lim_{n\rightarrow \infty}
- \frac{1}{n} 
\log 
\frac{
\mathbf{Z}_{\Lambda_n, \beta, q}^{\tau^{\mathbf{n}}}
}{
\mathbf{Z}_{\Lambda_n, \beta, q}^{\mathbf{1}}
},.
\end{equation}
It is well known~\cite{wu_duality_1997} that in two dimensions, the surface tension at any parameter $\beta > \beta_c$ corresponds to the mass at the dual parameter $\beta^* < \beta_c$.
As a consequence, we obtain the following:

\begin{corollary}\label{cor:surface_tension}
When $d=2$, for any $q \in \Z_{\geq 2}$, the function
\begin{equation} 
(\beta, \mathbf{n}) \mapsto \sigma^{\mathbf{n}}(\beta) > 0  
\end{equation}
is jointly analytic on $(\beta_c, +\infty) \times \bbS^1$.
\end{corollary}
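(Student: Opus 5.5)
This corollary reduces to Theorem~\ref{thm:analyticity_corr_length} through two-dimensional duality. The key input is the identity of~\cite{wu_duality_1997}: for $d=2$, $q\in\Z_{\geq 2}$ and $\beta>\beta_c$,
\[
\sigma^{\mathbf{n}}(\beta)=\frakm^{\mathsf{Potts}}(\beta^*,\mathbf{n}^{\perp}) ,
\]
or the same with $\mathbf{n}$ in place of $\mathbf{n}^\perp$, depending on the convention of orientation for the interface versus the dual two-point direction. Here $\beta^*<\beta_c$ is the dual inverse temperature. In the FK parametrization $\beta=-\log(1-p)$, it is defined by $pp^*=q(1-p)(1-p^*)$. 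Solving this gives $p^*=\frac{q(1-p)}{p+q(1-p)}$, so
\[
\beta^*=-\log\Bigl(1-\tfrac{q(1-p)}{p+q(1-p)}\Bigr),
\]
which reduces to the classical relation $(e^{\beta}-1)(e^{\beta^*}-1)=q$. First I would check the precise statement of this identity, including the orientation convention and the fact that the Dobrushin boundary condition above matches the one used there. Rotating by a fixed angle is a linear and hence analytic map of $\bbS^1$, so either convention suffices.

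Next, the map $\beta\mapsto\beta^*=\log\bigl(1+q/(e^{\beta}-1)\bigr)$ is real-analytic on $(0,\infty)$. It is a strictly decreasing involution, and since $\beta_c$ is its fixed point (as is known in two dimensions), it sends $(\beta_c,\infty)$ onto $(0,\beta_c)$. By the corollary on the Potts mass obtained from Theorem~\ref{thm:analyticity_corr_length} and Lemma~\ref{lem:ES_coupling}, the map $(\beta',x)\mapsto\frakm^{\mathsf{Potts}}(\beta',x)$ is jointly analytic on $(0,\beta_c)\times\R^2\setminus\{0\}$. The unit circle $\bbS^1$ is an analytic submanifold of $\R^2\setminus\{0\}$. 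Composing with the analytic map $(\beta,\mathbf{n})\mapsto(\beta^*,\mathbf{n}^\perp)$ therefore gives joint analyticity on $(\beta_c,\infty)\times\bbS^1$. Positivity follows from sharpness~\cite{duminil-copin_sharp_2019}: $\frakm^{\mathsf{Potts}}(\beta^*,\cdot)>0$ because $\beta^*<\beta_c$.

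The expected obstacle is the range of directions. The definition of $\tau^{\mathbf{n}}$ is only stated for $\mathbf{n}_1\geq 1/\sqrt2$, whereas the claim is on all of $\bbS^1$. I would handle this using the lattice symmetries of $\Z^2$, namely rotations by $\pi/2$ and reflections. These let one extend the definition to all $\mathbf{n}$ consistently: the surface tension in direction $R\mathbf{n}$ equals that in direction $\mathbf{n}$ for $R$ a lattice symmetry, and the mass has the same symmetry. The identity with the mass then holds on all of $\bbS^1$, so analyticity across the sector boundaries is inherited from the mass, which is analytic on all of $\R^2\setminus\{0\}$. No gluing argument is needed, because the mass side is already globally analytic.
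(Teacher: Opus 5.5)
Your proposal is correct and follows the paper's route. The paper simply invokes Wu's duality, which identifies $\sigma^{\mathbf{n}}(\beta)$ for $\beta>\beta_c$ with the Potts mass at $\beta^*<\beta_c$, and then composes with the analytic Potts mass; your treatment of $\beta\mapsto\beta^*$, of positivity and of the range of directions is more explicit than the text.

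One caveat for the check you plan concerns the normalization. With the literal normalization $-\frac1n\log$ on $\Lambda_n$, the interface joins $(\pm n,\pm n\mathbf{n}_2/\mathbf{n}_1)$, so the identity comes out as $\sigma^{\mathbf{n}}(\beta)=\frac{2}{\mathbf{n}_1}\frakm^{\mathsf{Potts}}(\beta^*,\mathbf{n})$. The choice of $\mathbf{n}$ versus $\mathbf{n}^\perp$ is immaterial, since the mass is invariant under rotation by $\pi/2$. The factor $\frac{2}{\mathbf{n}_1}$ is harmless on the sector $\mathbf{n}_1\geq 1/\sqrt2$. However, its symmetric extension $\frac{2}{\max(|\mathbf{n}_1|,|\mathbf{n}_2|)}\frakm^{\mathsf{Potts}}(\beta^*,\mathbf{n})$ has kinks on the diagonals. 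So your ``no gluing needed'' extension to all of $\bbS^1$ works only if you normalize by the interface length $2n/\mathbf{n}_1$ rather than by $n$. That is the standard convention, under which $\sigma^{\mathbf{n}}(\beta)=\frakm^{\mathsf{Potts}}(\beta^*,\mathbf{n})$ exactly.
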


\subsection{Discussion about the saturation phenomenon}

It is interesting to put our main result, Theorem~\ref{thm:analyticity_corr_length} in regard with the results of~\cite{aoun_non-analyticity_2021,aoun_two-point_2022,aoun_ornstein-zernike_2024}, which are particularly related to this work.

Indeed, in this series of works, the authors considered a class of long-range models, including (but not restricted to) the FK-percolation model with coupling constants of the form $J_x = \psi(x)\e^{-\rho(x)}$, with $\rho$ a norm on $\Z^d$ and $\psi$ a sub-exponential correction. 
In this framework, some conditions on the functions $\psi$ and $\rho$ were identified in~\cite{aoun_non-analyticity_2021,aoun_two-point_2022} to ensure the existence of a parameter $\beta_\mathsf{sat} \in (0, \beta_c)$ such that the mass of the model behaves differently in the regimes $(0, \beta_\mathsf{sat})$ and $(\beta_\mathsf{sat}, \beta_c)$. 
Indeed in the former regime, the rate of decay of the two-point function is governed by the norm $\rho$, while in the latter, the rate of decay is $\beta$-dependent (and actually strictly decreasing with $\beta$). 
Furthermore, more refined estimates on the behaviour of the two-point function are available.
On the one hand, it is proved in~\cite{aoun_two-point_2022} that for a restricted class of prefactor functions $\psi$, the classical Ornstein--Zernike asymptotical behaviour for the two-point function fails to occur in the whole saturated regime $\beta \in (0, \beta_{\mathsf{sat}})$. 
On the other hand, in~\cite{aoun_ornstein-zernike_2024}, the authors prove that under a \emph{non-saturation} assumption (called $\mathsf{NSA}$), then the usual Ornstein--Zernike decay of the two-points function is recovered. Furthermore it is conjectured that this $\mathsf{NSA}$ assumption is actually sharp, that is to say, that it holds in the whole regime $(\beta_{\mathsf{sat}}, \beta_c)$. 

In link with this work, we note that for models displaying $\beta_\mathsf{sat} \in (0, \beta_c)$ --- many fairly simple example of such models are discussed in~\cite{aoun_two-point_2022}\footnote{A concrete instance is the FK-percolation model on $\Z^d$ with the choice of coupling constants $J_x = \exp(- \rho(x)^\eta)\e^{-\rho(x)}$, where $\rho$ is a norm on $\Z^d$ respecting the symmetries of the lattice, and $0 < \eta < 1$.}--- the mass fails to be analytic at the parameter $\beta_\mathsf{sat}$. 
This is an interesting and previously unexpected phenomenon, as failure of analyticity of some thermodynamical quantities occurs \emph{within the disordered phase} of the model, revealing a more intricate change of behaviour than the mere phase transition of the model. 

In light of this discussion, our result proves the absence of such a phenomenon in the nearest-neighbour FK-percolation and Potts models, by showing analyticity of the correlation length in the whole subcritical phase. 
Furthermore, it was conjectured in~\cite[Open Problem 1.16]{aoun_non-analyticity_2021} that the correlation length should always be analytic in the regime $(\beta_\mathsf{sat}, \beta_c)$ (observe that in the regime $(0,\beta_\mathsf{sat})$, it is constant equal to $\rho$ and thus trivially analytic). 
We strongly believe that our work provides evidence for this conjecture to be true. 
Indeed, it establishes it in the arguably degenerate case of a model in which $\beta_\mathsf{sat} = 0$, 
but most importantly uses mostly three main ingredients (exponential mixing, Ruelle operator representation, mass gap) that are shown to be valid in the regime $\mathsf{NSA}$ discussed above. 
We thus think that our method supports the validity of this conjecture, and could possibly provide a good approach to tackle it.

\subsection{A rigidity principle and an application to non-rotational invariance of subcritical FK-percolation}

Analyticity of the correlation length provides a ``rigidity principle'', that can be summarized as follows. 

\begin{center}
``\emph{A property of the model that can be reformulated as an identity for a function of the correlation length is either true in the whole subcritical regime, or true only on an isolated set of parameters}.''
\end{center}

This principle can be useful in the sense that it is often easy to check the (non-)validity of an identity implying the correlation length in a \emph{perturbative} regime, that is for very low values of $p$. 

We provide an example of this principle concerning the (non-)rotational invariance of off-critical FK-percolation. 
For any $p < p_c$, define the sets (see~\cite{campanino_fluctuation_2008,dalimonte_near-critical_2025} for a discussion):
\begin{equation} 
\calU_p \coloneqq \{ x \in \R^d, \frakm(p,x) \leq 1 \} 
\quad 
\text{and}
\quad 
\calW_p \coloneqq \{ t \in \R^d, \forall \mathbf{n} \in \bbS^{d-1}, \langle \mathbf{n}, t\rangle \leq \frakm(p,\mathbf{n}) \}.
 \end{equation}
The first set is called the \emph{equi-decay set} and the second one the \emph{Wulff shape}.  
 
It is expected that the renormalized Wulff shape $\calW_p$ becomes round when $p$ approaches the critical point $p_c$. 
This fact has been established in a certain number of two-dimensional cases~\cite{duminil-copin_wulff_2013,manolescu_wulff_2026}, and in high dimension (higher than 15 in the case of Bernoulli percolation) in~\cite{liu_crossover_2026}.
Furthermore, this phenomenon should be a \emph{feature} of the critical point: a general statistical mechanics model on a lattice outside of its critical regime should ``feel'' the lattice and not exhibit any rotational symmetry. 
Using our main result Theorem~\ref{thm:analyticity_corr_length}, we are able to provide a slightly weaker --- but almost optimal --- version of this second statement. 
Define the set of ``exceptional rotationally invariant points'' as follows
\begin{equation}  \mathsf{Rot}_\mathsf{FK} \coloneqq \{ p \in (0,p_c), \exists \lambda \in \R, \partial\calW_p = \lambda\bbS^{d-1}  \}. \end{equation}
It corresponds to the set of ``exceptional'' parameters at which the Wulff shape is a dilation of the unit sphere\footnote{In the definition of this exceptional set, we might have replaced $\calW_p$ by $\calU_p$ without any issue as those two convex bodies are convex dual to each other, see~\cite{campanino_fluctuation_2008,dalimonte_near-critical_2025}.}. Then, our result is the following. 
\begin{theorem}\label{thm:non_rot}
For any $d \geq 2$, any $q \geq 1$ and any $\eps >0$, 
\begin{equation} \mathsf{Rot}_\mathsf{FK} \cap (\eps, p_c -\eps)\quad  \emph{\text{is finite}}.  \end{equation}
\end{theorem}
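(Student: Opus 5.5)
The plan is to reduce rotational invariance of $\calW_p$ to an identity between two values of the mass, and then use the analyticity of Theorem~\ref{thm:analyticity_corr_length} together with the principle of isolated zeros.

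First, by convex duality between $\calW_p$ and $\calU_p$, $\partial\calW_p = \lambda\bbS^{d-1}$ holds if and only if $\frakm(p,\cdot)$ is a multiple of the Euclidean norm. In that case $\frakm(p,\mathbf{n})$ is constant on $\bbS^{d-1}$. In particular it gives
\[
F(p) \coloneqq \frakm(p, e_1) - \frakm\big(p, \tfrac{1}{\sqrt d}(1,\dots,1)\big) = 0 .
\]
So $\mathsf{Rot}_\mathsf{FK} \subset \{p \in (0,p_c) : F(p)=0\}$. By Theorem~\ref{thm:analyticity_corr_length}, restricted to the two fixed directions, $F$ is real-analytic on $(0,p_c)$.

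Second, I show that $F$ is not identically zero by examining the perturbative regime $p \to 0$. For the axis direction, $\phi^0_{p,q}[0\lra ne_1] \geq c\,(p/q)^n$ up to constants, since the straight path is open with probability at least of order $(p/(p+q(1-p)))^n$ by comparison with Bernoulli percolation. Conversely, any connection requires an open self-avoiding path of length at least $n$, and there are at most $(2d)^k$ such paths of length $k$. Hence $\frakm(p,e_1) = -\log p + O(1)$. For the diagonal direction, with $x$ of $L^1$ norm $\sqrt d\cdot n/\sqrt d\cdot d^{1/2}$, a path to $\lfloor n(1,\dots,1)/\sqrt d\rfloor$ has length at least $\sqrt d\, n$. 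The same bounds then give
\[
\frakm\big(p,\tfrac{1}{\sqrt d}(1,\dots,1)\big) = -\sqrt d\log p + O(1),
\]
with $O(1)$ uniform in small $p$; the entropy of paths only shifts the constant. Therefore $F(p) = (\sqrt d -1)\log p + O(1) \to -\infty$ as $p\to 0$, so $F\not\equiv 0$ on the connected interval $(0,p_c)$.

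Third, an analytic function on $(0,p_c)$ that is not identically zero has isolated zeros there. Hence its zeros in the compact interval $[\eps,p_c-\eps]$ are finitely many, which gives the claim.

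The main obstacle is the second step: making the upper and lower bounds uniform for the dependent FK measure, including $q\ge1$ noninteger. I would first try stochastic domination by Bernoulli percolation, $\phi^0_{p,q} \preceq \mathrm{Bernoulli}(p)$ and $\phi^0_{p,q} \succeq \mathrm{Bernoulli}(p/(p+q(1-p)))$. Combined with the path counting and the subadditivity-defined limit, these give the needed $\log p$ asymptotics.
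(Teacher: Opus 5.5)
Your proposal is correct and follows the paper's proof. You use the non-vanishing of a function of $\frakm(p,\mathbf{e_1})$ and $\frakm(p,\mathbf{e_{\mathsf{diag}}})$, invoke Theorem~\ref{thm:analyticity_corr_length} with isolated zeros on the compact interval, and exclude identical vanishing via the regime $p\to 0^+$. The differences are cosmetic: you take a difference instead of the paper's ratio $\frakm(p,\mathbf{e_{\mathsf{diag}}})/\frakm(p,\mathbf{e_1})-1$, and you make explicit, through Bernoulli domination and path counting, the asymptotics $\frakm(p,x)=-|x|_1\log p+O(|x|_1)$ that the paper summarises as convergence of the rescaled equi-decay set to the $L^1$ ball.
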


It is natural to conjecture that $\mathsf{Rot}_\mathsf{FK}$ is empty (observe that this is true in the case of the two-dimensional Ising model by the computation of~\cite{2D_Ising_mass}, see also~\cite{messikh_surface_2006}) but we are currently unable to derive this stronger result. 

\begin{proof}
That the Wulff shape is a sphere implies that the mass is constant on $\S^{d-1}$. 
We prove that this can occur at most in an isolated subset of $(0, p_c)$. 
Consider the vectors $\mathbf{e_1} = (1,0,\dots,0)$ and $\mathbf{e_{\mathsf{diag}}} = \tfrac 1 {\sqrt d} (1, 1, \dots, 1) \in \S^{d-1}$. 
We now consider the function $f : p\in (0, p_c) \mapsto  \frac{\frakm(p,\mathbf{e_{\mathsf{diag}}})}{\frakm( p,\mathbf{e_1})} - 1$.
By Theorem~\ref{thm:analyticity_corr_length}, $f$ is real analytic in its domain: thus its zero set is either $(0, p_c)$, either consists in only isolated points and the result follows.
We rule out the first alternative by the easy observation that 
\[ f(p) \underset{p \rightarrow 0^+}{\longrightarrow} \sqrt d - 1, \]
since the rescaled equi-decay set converges towards the $L^1$ unit ball as $p \rightarrow 0^+$. 
\end{proof}

\begin{remark}
Other interesting identities involving surface tensions or masses might be considered in the framework of the rigidity principle applied to the mass and/or the surface tension of the two-dimensional Potts models, such as 
\begin{itemize}
\item \emph{Antonov's rule} $2\sigma^{\mathbf{0}/\mathbf{1}}(\beta) = \sigma^{\mathbf{1}/\mathbf{2}}(\beta)$, the validity of which implies the \emph{perfect wetting} of the system~\cite{messager_antonov_1991}. 
\item \emph{Young's relation} for the contact angle of a sessile drop on a wall~\cite{de-coninck_young_1989}. 
\item Strict triangle inequality for the mass: for any pair of non-collinear vectors $(u, v)$, we have the strict inequality $\frakm(p,u+v) < \frakm(p,u) + \frakm(p,v)$.
Equality in this relation would imply the creation of \emph{facets} in the Wulff shape.  
The validity of this strict inequality in the case of the Potts model is a consequence of~\cite{campanino_fluctuation_2008}; however our method gives an alternative proof of the weaker statement that once $u$ and $v$ are fixed this strict inequality can only fail for an isolated set of parameters. 
\end{itemize}
\end{remark}

\subsection{Polar duality}

Recall the definitions of the \emph{equi-decay set} and of the \emph{Wulff shape}.
\begin{equation} 
\calU_p = \{ x \in \R^d, \frakm(p,x) \leq 1 \} 
\quad 
\text{and}
\quad 
\calW_p = \{ t \in \R^d, \forall \mathbf{n} \in \bbS^{d-1}, \langle \mathbf{n}, t\rangle \leq \frakm(p,\mathbf{n}) \}.
 \end{equation}
 
 Those two sets are polar dual to each other (see~\cite{convexity} for all the relevant background on convex duality).
 Moreover, $\frakm(p, \cdot)$ is the gauge function (resp. the support function) of $\calU_p$ (resp. of $\calW_p$). Introduce $\frakg(p, \cdot)$ to be the gauge function (resp. the support function) of $\calW_p$ (resp. of $\calU_p$), that is the function such that: 
 \begin{equation} 
\calW_p = \{ x \in \R^d, \frakg(p,x) \leq 1 \} 
\quad 
\text{and}
\quad 
\calU_p = \{ t \in \R^d, \forall \mathbf{n} \in \bbS^{d-1}, \langle \mathbf{n}, t\rangle \leq \frakg(p,\mathbf{n}) \}.
 \end{equation}
From this equation, one immediately sees that $\frakg$ is strictly positive in $(0,p_c) \times \R^d\setminus\{0\}$.
For reasons that will appear clearer later, it is actually more convenient to prove that the \emph{inverse} of $\frakg$ is analytic.
\textbf{ Thus, in what follows, we will systematically adopt the notation $\fraky(p, t) := \frac{1}{\frakg(p, t)}$}.
This new\footnote{For any $\mathbf{n} \in \bbS^{d-1}$, the quantity $\fraky(p,\mathbf{n})$ is nothing but the rate of decay of point-to-hyperplanes connection probabilities in the direction $\mathbf{n}$. This is also the radius of the Wulff shape in the direction given by the unit vector $\mathbf{n}$. We will not use this geometric interpretation, but refer to~\cite{ott_decay_2022,dalimonte_near-critical_2025} for further details.} quantity $\fraky$ is well-defined for $(p,t) \in (0,p_c) \times \R^d\setminus\{0\}$.
We also note that $\fraky(p, \cdot)$ is homogeneous of degree $-1$. 
The main focus of this paper is the proof of the following result.

\begin{theorem}[Analyticity of $\fraky$]
\label{thm:analyticity_leg_m}
For any dimension $d \geq 2$ and $q \geq 1$, the function 
\[
(p,t) \mapsto \fraky(p, t)
\]
is jointly analytic on $(0, p_c) \times \R^d \setminus \{0\}$.
\end{theorem}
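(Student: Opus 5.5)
We will follow the strategy announced in the abstract: realise $\fraky(p,t)$ as the solution $s$ of an implicit equation $\lambda(p,t,s)=1$, where $\lambda(p,t,s)$ is the leading eigenvalue of the FK-Ruelle operator of~\cite{campanino_ornstein_2003}. We then combine Kato's analytic perturbation theory with the analytic implicit function theorem.

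\textbf{Step 1: the Ruelle representation.} Fix $p_0<p_c$ and $t_0\neq 0$, and choose a direction and a cone adapted to $t_0$. By Ornstein--Zernike theory (coarse-graining and the cone-point/diamond decomposition, valid in the whole subcritical regime by sharpness~\cite{duminil-copin_sharp_2019}), the point-to-point connection probability $\phi^0_{p,q}[0\lra x]$ admits, up to exponentially small errors, a representation as a chain of irreducible ``diamond'' pieces. These pieces are coupled through a mixing weight with exponentially decaying dependence on the past.

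Weighting each piece by $\e^{s\langle t,\cdot\rangle}$ defines a transfer (Ruelle) operator $\calK_{p,t,s}$. It acts on a Banach space of functions of semi-infinite sequences of irreducible pieces that are Hölder for the natural ultrametric. One has $\sum_x \e^{s\langle t,x\rangle}\phi^0[0\lra x]<\infty$ exactly when $s<\fraky(p,t)$. The spectral radius $\lambda(p,t,s)$ of $\calK_{p,t,s}$ is then characterised by $\lambda(p,t,\fraky(p,t))=1$.

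\textbf{Step 2: analytic dependence of the operator.} We will show that $(p,t,s)\mapsto\calK_{p,t,s}$ extends to a holomorphic family of bounded operators on a complex neighbourhood of $(p_0,t_0,\fraky(p_0,t_0))$. Analyticity in $(t,s)$ is straightforward, because the weights are exponentials and the mass gap gives summability for slightly complexified parameters. Analyticity in $p$ is more delicate: the FK weights of a single irreducible piece are ratios of partition functions depending on the boundary. Here we use exponential mixing, i.e.\ ratio weak mixing of the subcritical FK measure, which is uniform in a real neighbourhood of $p_0$. This writes the boundary-dependence of the weights as a convergent cluster-type expansion in $p$, with terms that are polynomials in $p/(1-p)$ and are exponentially small in the diameter. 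Uniform exponential bounds then give convergence on a complex polydisc.

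\textbf{Step 3: spectral gap and Kato.} The Ruelle--Perron--Frobenius theorem for positive operators with summable, Hölder, exponentially decaying interactions gives that $\lambda(p_0,t_0,s_0)$ is a simple isolated eigenvalue with the rest of the spectrum strictly inside a smaller disc. Kato's perturbation theory then makes $\lambda$ holomorphic near $(p_0,t_0,s_0)$. Finally, $\partial_s\lambda>0$ at that point, since increasing $s$ strictly increases every weight and the derivative equals a positive expectation of $\langle t, X\rangle$ under the associated Gibbs measure, where $X$ is the displacement of a single piece. The analytic implicit function theorem then yields analyticity of $(p,t)\mapsto\fraky(p,t)$ near $(p_0,t_0)$.

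The main obstacle is Step 2, specifically analyticity in $p$. The weights involve infinite-volume FK probabilities whose dependence on $p$ is not a priori analytic. We would first prove a finite-energy/ratio-mixing estimate that holds uniformly for complex $p$ near $p_0$. For this we approximate by finite-volume conditional measures, whose weights are rational in $p$, and control the approximation errors via the exponential decay from sharpness. The errors must remain summable after multiplication by $\e^{\mathrm{Re}(s)\langle t,x\rangle}$, which is where the mass gap is essential.
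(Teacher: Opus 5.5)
Your overall architecture matches the paper's: Ruelle representation, Ruelle--Perron--Frobenius, several-variable Kato, then the analytic implicit function theorem with $\partial_s\lambda>0$. Steps 1 and 3 are fine; your derivative argument is equivalent to the paper's comparison $\frakL_{\psi_p^{(1+\eta)t}}h\geq \e^{\varsigma\eta}\lambda h$ for the positive eigenfunction $h$.

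The gap is Step 2, which carries essentially all the new content, and the mechanism you propose there does not work. The potential is a \emph{logarithm} of FK probabilities. So for $z$ in a disc whose radius does not depend on the piece, you need three things.
(a) The complexified probabilities of $\mathsf{Ins}(\bar\frakc_1)$ and $\mathsf{Out}(\bar\frakc_1)$ must not vanish, so that $\psi^s_{p+z}$ is defined at all.
(b) You need a bound $|\psi^s_{p+z}(\bar\frakc)-\psi^s_p(\bar\frakc)|\leq c_\eps|\bar\frakc_1|$ with $c_\eps\to0$.
(c) You need a uniform Hölder bound $\sup_z|\psi^s_{p+z}|_\theta<\infty$; without it, $z\mapsto\frakL_{\psi^s_{p+z}}$ is not even a bounded family on $\calF^t_{\kappa,\theta}$, let alone a holomorphic one.

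Sharpness, FKG and real ratio weak mixing concern the positive measures at real parameters. By themselves they give no control on the complex extensions, which are not measures. Approximating by finite-volume rational weights only helps once you have a bound on the complex finite-volume logarithms that is uniform in the volume and in the piece (this is what Vitali's theorem needs), and that bound is exactly what is missing. The appendix even exhibits local events with growing supports whose complexified probabilities vanish at $z_n\to0$. So no argument that ignores the structure of the events can produce such a disc.

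The missing idea is that $\mathsf{Ins}$ and $\mathsf{Out}$ are unique-decoupling events, for which
\[
\frac{\phi^0_{G,p+z}[A]}{\phi^0_{G,p}[A]}=\alpha_z^{o(\om_A)}\,\frac{\mathbf{Z}^0_{G,p}}{\mathbf{Z}^0_{G,p+z}}\,\frac{\mathbf{Z}^0_{G\setminus\mathsf{Supp}_A,p+z}}{\mathbf{Z}^0_{G\setminus\mathsf{Supp}_A,p}}.
\]
This is combined with a convergent polymer expansion of $\mathbf{Z}^0_{G,p+z}/\mathbf{Z}^0_{G,p}$ (Ott's Glauber-dynamics construction). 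Its weights are small uniformly over \emph{all} subgraphs $G$ and depend only on $G$ near the polymer; real mixing enters only as an input to this expansion. After cancellation, only polymer clusters touching $\mathsf{Supp}_A$ survive, which gives (a) and (b). In the difference between $G_{\bar\frakc}$ and $G_{\bar\frakc'}$, only clusters touching both $\mathsf{Supp}_A$ and $E(G_{\bar\frakc})\Delta E(G_{\bar\frakc'})$ survive. This gives the complex mixing estimate, hence (c).

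Even granting (b), your remark that the mass gap is ``essential'' does not close the argument for the complex mass-gap bound. The complex perturbation costs a factor $\e^{c_\eps|\bar\frakc'_1|}$ in the \emph{volume} of the added piece. The real mass gap, by contrast, only tolerates a small extra tilt in its \emph{displacement} $\langle t,X(\bar\frakc'_1)\rangle$, and volume is not controlled by displacement. The paper splits the sum according to whether $|\bar\frakc'_1|>K\langle t,X(\bar\frakc'_1)\rangle$. The large-volume part is controlled by the exponential tail of $|\calC_0|$ coming from sharpness. The remaining part is dominated by $\sum \e^{\psi_p^{(1+c_\eps K)t}(\bar\frakc')}$, which is finite by the real mass gap once $c_\eps K$ is small. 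Some such volume-versus-displacement argument is needed before Lasota--Yorke and Kato can be applied to the complexified family.
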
  

Transmitting this joint analyticity property to the original function $\frakm$ is not difficult. 
We use an \emph{ad hoc} result, Lemma~\ref{lem:dual_analytic}, formulated and proved in the Appendix.  
Thus, one can already provide a proof of Theorem~\ref{thm:analyticity_corr_length}

\begin{proof}[Proof of Theorem~\ref{thm:analyticity_corr_length}]
By definition of $\fraky$, Theorem \ref{thm:analyticity_leg_m} readily implies that the function $(p,t) \mapsto \frakg(p,t)$ is jointly analytic on $(0,p_c) \times \R^d \setminus \{0\}$. 

It remains to check the additional assumptions of Lemma~\ref{lem:dual_analytic}, which are all provided by~\cite[Theorem B]{campanino_fluctuation_2008}.
Indeed, it is the content of that result that for any fixed $p < p_c$, the surfaces $\partial \calU_p$ and $\partial\calW_p$ are analytic, strictly convex and have strictly positive curvature. 
Thus, by Lemma~\ref{lem:dual_analytic}, $(p,x) \mapsto \frakm(p,x)$ is jointly analytic on $(0,p_c) \times \R^d \setminus \{0\}$.
\end{proof}

\subsection{Discussion and overview of the proof}

\subsubsection{Limitations of the ``direct approach''}

Recall that 
\begin{equation}\label{equ:def_corr_length}
\frakm(p, x) \coloneqq  \lim_{n \rightarrow +\infty} -\frac 1 n \log \phi^0_{p,q}[0 \lra \lfloor n x \rfloor].
\end{equation}
A natural first attempt to extend~\eqref{equ:def_corr_length} to the complex plane is to use the uniform analytic extension of the quantity $\phi^0_{p,q}[0 \lra \lfloor n x \rfloor]$ to a complex neighbourhood of $p$ provided by~\cite[Theorem 1.7]{dalimonte_analyticity_2026}, 
and to study the logarithm of these functions; thus directly constructing a mass for the complexified model $\phi^0_{p+z,q}$ defined in~\cite{dalimonte_analyticity_2026}. 
However, the presence of the logarithm in~\eqref{equ:def_corr_length} is a serious nuisance to this strategy, as one would need to show the existence of a complex neighbourhood of $p$ on which all the complex extensions of $ \phi^0_{p,q}[0 \lra \lfloor n x \rfloor]$ do not vanish.
We are not aware whether this property holds true or not; in particular, it might very well be the case that the quantity $\phi^0_{p+z,q}[0 \lra \lfloor n x \rfloor]$ vanishes for $z$ arbitrarily close to $0$, because of the cancellations that may occur when $z$ is not real\footnote{An example illustrating this phenomenon is provided in Appendix~\ref{sec:counter_example}}. 

The implementation of a weaker version of this strategy is well-known in the case of a \emph{magnetic} perturbation of the model. 
Indeed, in~\cite{frohlich_rodriguez_2012,frohlich_rodriguez_2017}, the authors build upon Lee--Yang theory and a beautiful argument due to Lebowitz and Penrose~\cite{lebowitz_penrose} to argue that for any $h\in \C$ with $\Re(h)>0$ and any $\beta > 0$, in the case of the Ising model with temperature $\beta$ and external magnetic field $h$, the quantity
\begin{equation}\label{def:mass_gap_modulus}
- \limsup_{n \rightarrow  \infty} \frac 1 n \log | \langle \sigma_0 \sigma_{\lfloor nx \rfloor} \rangle^c_{\beta, h} |
\end{equation}
is strictly positive. 
The superscript $\langle \cdot \rangle^c$ means that one is looking at the \emph{truncated} correlation functions of the Ising model.
This method may be successfully replicated in our context thanks to~\cite[Theorem 1.7]{dalimonte_analyticity_2026}; 
however its conclusion is significantly weaker than the one of the present paper, 
as it only establishes an upper bound on the exponential decay of the correlations.
Indeed, it does not rule out the fact that replacing the $\limsup$ in~\eqref{def:mass_gap_modulus} by a $\liminf$ may result in an infinite mass. 
As explained above, this is a serious issue that is hard to circumvent.

\subsubsection{Outline of the method}

We thus chose to follow a different strategy, inspired by~\cite{campanino_ornstein-zernike_2003} and prove only that the the real function defined in~\eqref{equ:def_corr_length} can be extended to an analytic function, instead of constructing directly a mass for the complexified model $\phi^0_{p+z,q}$. As explained previously, our method is to first analytically extend the function $\fraky$ instead of~$\frakm$. We explain this choice below.

\medskip
\noindent
The idea of~\cite{campanino_ornstein-zernike_2003} allows to interpret the inverse of the polar dual of the mass $\fraky(p,t)$ as the solution of an equation involving the spectral radius of an operator, which is called the \emph{FK-Ruelle} operator $\frakL_{\psi^t_p}$, acting on an infinite-dimensional functional Banach space. 
The identification of the spectral radius with the dominant eigenvalue of the FK--Ruelle operator, and the important fact that it is isolated from the rest of the spectrum and of algebraic multiplicity one, constitute one of the core results of~\cite{campanino_ornstein-zernike_2003} and fall in the classical framework of \emph{Ruelle--Perron--Frobenius} type theorems. 
Our main contribution is to construct a complexified version of the FK-Ruelle operator $\frakL_{\psi^s_{p+z}}$ and to show that this complexified operator is an holomorphic perturbation of the real FK--Ruelle operator. 
We then crucially rely on Kato's theory of analytic perturbations of linear operators developed in~\cite{kato_perturbation_1995} to conclude that the mapping from $p$ to the leading eigenvalue of $\frakL_{\psi_{p}^t}$ is analytic.
Analyticity of $\fraky$ then follows from an easy application of the holomorphic implicit function theorem, and analyticity of $\frakm$ follows, as explained above.

\medskip
\noindent
The main innovation of this paper is thus the proof of the regularity of the FK-Ruelle operator, when the parameter $p$ is perturbed by a small complex number. 
To this end, we rely on a rigorous cluster expansion method \emph{à la}~\cite{ott_weak_2020-1,ott_ising}.
We want to emphasize that the fact that this method succeeds in proving regularity of the FK-Ruelle potential $\psi^s_{p+z}$ is a feature of the specific form of the FK-Ruelle potential.
Indeed, the reader familiar with Ornstein--Zernike theory may be surprised by the fact that we do not use the ``modern'' renewal picture provided by~\cite{ott_potts_2018,dalimonte_near-critical_2025}. 
Unfortunately, the form of the decoupled weight functions appearing in those works is too intricate to verify the regularity properties needed to implement Kato's perturbation theory. 
This explains the need to perform the cluster expansion technique directly at the level of the FK-Ruelle potential. 

\subsubsection{Organization of the article}
The remainder of the article is organized as follows: 
\begin{itemize}
\item Section~\ref{sec:kato} is a review of one of the fundamental results of Kato's analytic perturbation theory, that shall be used to prove the main result. 
\item Section~\ref{sec:perturbed_ruelle} introduces the FK--Ruelle operator, and states Theorem~\ref{thm:complex_reg_ruelle}, which is the crucial tool for this works. 
We use it to derive the crucial identity~\eqref{equ:corr_length_eigenvalue_Ruelle}, which is the starting point of the analysis. 
\item Section~\ref{sec:conditional_proof} introduces the Ruelle--Perron--Frobenius theorem of~\cite{campanino_ornstein-zernike_2003}, which relates the spectral radius of the FK--Ruelle operator with its leading eigenvalue. 
It contains a proof of Theorem~\ref{thm:analyticity_corr_length}, conditionally on the crucial complex regularity Theorem~\ref{thm:complex_reg_ruelle}.
\item Section~\ref{sec:exp_bounds} is dedicated to the proof of Theorem~\ref{thm:complex_reg_ruelle}, using a cluster expansion method based on~\cite{ott_weak_2020-1}.
\item Finally, Appendix~\ref{sec:appendix} gather technical tools that are needed in the body of the article. It contains a separate new proof of the continuity of the mass in its real subcritical $p$-parameter, a re-proof of a mixing estimate in a slighty different context than the classical formulation in FK-percolation, Lemma~\ref{lem:dual_analytic} about analyticity of polar dual transforms of regular functions, and a counterexample to the generalization of Theorem~\ref{thm:clust_exp_UDE} to general local events.
\end{itemize}

The main novelty of this article is thus Theorem~\ref{thm:complex_reg_ruelle} and its proof; 
the rest of the arguments are mainly adaptation or improvements of the arguments of~\cite{campanino_ornstein-zernike_2003,campanino_fluctuation_2008}.

\section{Preparation: Kato's perturbation theory in several variables}
\label{sec:kato}

As previously explained, we import a key input for the remainder of this work, extracted from the celebrated \emph{analytic perturbation theory} developed by Kato.
The best exposition to that theory is presented in the book~\cite{kato_perturbation_1995}. 
This is a review section; the reader familiar with this theory may safely skip it. 

We will work in a framework that is not \emph{stricto sensu} covered by the latter reference: namely, our perturbation will be analytic \emph{in several variables}. 
This extension has however been studied in~\cite{baumgartel_article} (see the supplement in ~\cite{baumgartel_book} for an english version). 
The result below is presented in~\cite[S2, Corollary 1]{baumgartel_book} and is essentially a direct adaptation of the argument of \cite[chapter VII-§ 1.3]{kato_perturbation_1995}.

In the following, we fix a Banach space $\calF$, and some $\delta > 0$.
If $\mathfrak{L}$ is a linear operator on $\calF$, we denote by $\Sigma(\mathfrak{L})$ the spectrum of $\mathfrak{L}$ (see for example \cite[III-§6]{kato_perturbation_1995}). 
An eigenvalue $\lambda$ of $\mathfrak{L}$ is said to be \emph{non-degenerate} if:
\begin{enumerate}
\item $\lambda$ has algebraic multiplicity 1.
\item $\lambda$ is isolated, that is, there exists a rectifiable simple closed curve $\Gamma$ that separates\footnote{That is, that $\Gamma$ does not intersect $\Sigma(\mathfrak{L})$ and the only point of $\Sigma(\mathfrak{L})$ in the interior of $\Gamma$ is $\lambda$.} $\lambda$ from the rest of the spectrum $\Sigma(\frakL)\setminus \{\lambda\}$.
\end{enumerate} 

\begin{theorem}[\cite{baumgartel_book}]
\label{thm:kato}

Let $m \in \Z_{> 0}$. 
Let $(\mathfrak{L}_{z})_{z \in U}$ be a uniformly bounded family of linear operators on the Banach space $\calF$, where $U$ is a neighbourhood of 0 in $\C^m$. 
Assume that:

\begin{enumerate}
\item The map $z \in U \mapsto \mathfrak{L}_{z}$ is holomorphic.\footnote{An operator-valued map is said to be holomorphic in several variables if it is differentiable in all its variables in norm in a complex domain. Note that the argument of \cite[S2, Def. 1]{baumgartel_book} requires only the weaker notion of ``holomorphy of type A'', as it deals with the general case of unbounded operators. 
}
\item The operator $\mathfrak{L}_{0}$ admits a non-degenerate eigenvalue $\lambda \in \bbC$ separated from the rest of $\Sigma(\mathfrak{L}_{0})$ by a curve $\Gamma$.
\end{enumerate} 
Then, there exists a neighbourhood $V$ of $0$ and a \emph{holomorphic} map $z \in V \mapsto \lambda(z) \in \C$ such that $\lambda(0) = \lambda$, and for all $z \in V$, $\lambda(z)$ is a non-degenerate eigenvalue of $\frakL_{z}$ separated from the rest of $\Sigma(\mathfrak{L}_{z})$ by the same curve $\Gamma$.
Furthermore, if $\lambda$ satisfies the following maximality assumption:
\begin{equation}
\sup_{\mu \in \Sigma(\mathfrak{L}_{0}) \setminus \{\lambda\}} |\mu| < |\lambda|,
\end{equation}
then $\lambda(z)$ remains a dominant eigenvalue for $\frakL_{z}$, that is
\begin{align}\label{equ:max_kato}
\sup_{\mu \in \Sigma(\mathfrak{L}_{z}) \setminus \{ \lambda(z)\}} |\mu| < |\lambda(z)|.
\end{align}
\end{theorem}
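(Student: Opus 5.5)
The approach is the Riesz-projection argument of Kato, run in several complex variables. Since $\Gamma$ is a compact curve disjoint from $\Sigma(\frakL_0)$, the resolvent $R_0(\zeta) = (\frakL_0 - \zeta)^{-1}$ is continuous in $\zeta \in \Gamma$, so $M \coloneqq \sup_{\zeta\in\Gamma}\|R_0(\zeta)\| < \infty$. By holomorphy (hence norm continuity) of $z\mapsto \frakL_z$, choose a neighbourhood $V\subset U$ of $0$ with $\|\frakL_z - \frakL_0\| < 1/(2M)$ for $z\in V$. A Neumann series then gives, for $\zeta \in \Gamma$ and $z\in V$,
\[
R_z(\zeta) = R_0(\zeta)\sum_{k\ge 0}\big((\frakL_0-\frakL_z)R_0(\zeta)\big)^k .
\]
Thus $\Gamma\cap\Sigma(\frakL_z)=\emptyset$, and $(z,\zeta)\mapsto R_z(\zeta)$ is jointly continuous and holomorphic in $z$, since each term is a product of holomorphic maps and the series converges uniformly.

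Next, define the Riesz projections $P_z \coloneqq -\frac{1}{2\pi i}\oint_\Gamma R_z(\zeta)\,d\zeta$. These are holomorphic in $z$ (differentiate under the integral, or use Hartogs/Osgood with separate holomorphy plus local boundedness) and satisfy $\|P_z - P_0\| \to 0$. Two projections at distance less than one have ranges of equal dimension, so $\operatorname{rank} P_z = \operatorname{rank} P_0 = 1$ on a possibly smaller $V$. Since $P_z$ commutes with $\frakL_z$ and has rank one, the spectrum of $\frakL_z$ inside $\Gamma$ is a single eigenvalue $\lambda(z)$ of algebraic multiplicity one, and the rest of $\Sigma(\frakL_z)$ lies outside $\Gamma$. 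This gives non-degeneracy with the same separating curve. To get holomorphy of $\lambda$, write
\[
\lambda(z) = \operatorname{tr}(\frakL_z P_z) = \frac{\ell(\frakL_z P_z v)}{\ell(P_z v)},
\]
where $v$ is an eigenvector of $\frakL_0$ and $\ell$ is a functional with $\ell(v)\ne 0$. The denominator is nonzero near $0$, so $\lambda$ is a quotient of holomorphic scalar functions, and $\lambda(0)=\lambda$.

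For the maximality statement, pick $r$ with $\sup_{\mu\in\Sigma(\frakL_0)\setminus\{\lambda\}}|\mu| < r < |\lambda|$. We may shrink $\Gamma$ if needed, since any curve separating $\lambda$ works and the conclusion for a smaller curve is what we use here; choose it inside the annulus $\{|\zeta|>r\}$. Let $K \coloneqq \{\zeta : |\zeta|\ge r\}\setminus \operatorname{int}\Gamma$. This set is closed and lies in the resolvent set of $\frakL_0$. It is unbounded, but for $|\zeta| > 2\sup_{z\in U}\|\frakL_z\|$ the resolvent exists trivially; the uniform boundedness hypothesis is used precisely here. On the compact remainder $K'$ of $K$, upper semicontinuity of the spectrum gives, by the same Neumann argument with $M' = \sup_{K'}\|R_0\|$, that $K'\cap\Sigma(\frakL_z)=\emptyset$ for $z$ near $0$. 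Hence $\Sigma(\frakL_z)\setminus\{\lambda(z)\}\subset\{|\zeta|<r\}$. By continuity of $\lambda$ we have $|\lambda(z)|>r$ near $0$, which yields the strict inequality \eqref{equ:max_kato}.

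The main delicate point is the rank-stability and holomorphy step in several variables. Joint holomorphy of $P_z$ needs either Hartogs' theorem or a direct check that the Neumann series is a norm-convergent power series in $z$. Since $\frakL_z$ is holomorphic and hence locally a convergent power series $\sum_\alpha A_\alpha z^\alpha$ in operator norm, I would verify the latter directly, estimating coefficients by Cauchy bounds on a polydisc.
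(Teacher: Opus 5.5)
Your proposal is correct and follows essentially the route the paper relies on: the paper gives no proof of its own, citing Baumg\"artel's several-variable version of Kato's Riesz-projection argument (Kato, Ch.~VII, \S1.3), and your Neumann-series control of the resolvent on $\Gamma$, rank stability of $P_z$, and upper semicontinuity of the spectrum is that same argument. (Minor point: in the maximality step you need neither to shrink $\Gamma$ nor the global uniform bound, since $K=\{|\zeta|\ge r\}\setminus\operatorname{int}\Gamma$ already avoids $\Sigma(\frakL_0)$, and boundedness of $\|\frakL_z\|$ near $0$ follows from norm continuity.)
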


\section{Holomorphic perturbation of the FK--Ruelle operator}
\label{sec:perturbed_ruelle}

The goal of this section is to derive the fundamental identity~\eqref{equ:corr_length_eigenvalue_Ruelle}, which will be the starting point of our analysis. 
Informally, it expresses $\fraky(p,x)$ for $p \in (0,p_c)$ and $x \in \Z^d\setminus \{0\}$ as the solution of an equation involving the spectral radius of a certain operator, called the \emph{FK-Ruelle operator}. 
This operator falls into the more general setting of Ruelle operators, which have been a very active topic of investigation; we refer for example to the reference works~\cite{parry_zeta_1990,sarig_thermodynamic_1999,baladi_positive_2000,ruelle_thermodynamic_2004}.
The operator we study is specific to FK-percolation, and was introduced in~\cite{campanino_fluctuation_2008}. 

\medskip

{\bf In the remainder of the paper, we fix $d \geq 2$, $q \geq 1$ and a subcritical parameter $p \in (0, p_c)$, which we shall often omit from the notation. We also choose a reference direction by fixing $t \in \partial \calW_p$. Our goal is to prove that $\fraky$ is analytic around $(p,t)$.}

\subsection{The classical FK--Ruelle operator}

\subsubsection{Irreducible cluster decomposition \emph{à la} Campanino--Ioffe--Velenik}

In this section we recall the definition of irreducible clusters given in \cite{campanino_fluctuation_2008}. 
We adopt the very elegant construction of the authors, based on the notion of \emph{surcharge}. 

We shall perform the construction in the fixed direction $t \in \partial \calW_p$, this means that we will work on cones constituted of all the points of $\bbZ^d$ whose \emph{surcharge} with respect to the fixed direction $t$ is smaller than a certain constant $\kappa$. More precisely, for any $v \in \Z^d$ and $\kappa \in (0,1)$, define the following sets: 
\begin{itemize}
\item The forward $t$-cone of surcharge $\kappa$ and of apex $v$:
\[ \calY_\kappa^{t,+}(v) \coloneqq v + \{ y \in \Z^d, \langle t, y \rangle \geq (1-\kappa)\frakm(p, y)\}.  \]
\item The backward $t$-cone of surcharge $\kappa$ and of apex $v$:
 \[ \calY_\kappa^{t,-}(v) \coloneqq v - \{ y \in \Z^d, \langle t, y \rangle \geq (1-\kappa)\frakm(p, y)\}. \]
 \item For $v,w \in \Z^d$, the $\kappa$-surcharged $t$-diamond of apexes $v$ and $w$:
\begin{equation}\calD^t_\kappa(v,w) \coloneqq  \calY^{t, +}_\kappa(v) \cap \calY^{t, -}_\kappa(w).\end{equation}
\end{itemize}

As in~\cite{campanino_fluctuation_2008}, if $\kappa$ is chosen too small, it may be the case that $\calY_\kappa^{t,+}(v)$ is degenerated, in the sense that the cone contains no edge of $\Z^d$ with endpoint $v$. 
We observe that one may always perform a \emph{local} surgery, so that the cone contains exactly one edge with endpoint $v$. By the symmetries of the lattice $\bbZ^d$, we may assume that this edge is $\{ v, v +(1, 0, \dots, 0) \}$. One may perform the same \emph{local} surgery on $\calY_\kappa^{t,-}(v)$, symmetrically. The only edge of $\calY_\kappa^{t,-}(v)$ with endpoint $v$ is therefore the edge $\{ v, v - (1, 0, \dots, 0) \}$. This local modification does not affect the asymptotic analysis of the associated Ruelle operator. We shall therefore always work implicitly with the modified $t$-cones and the resulting $t$-diamonds.
The careful reader might keep in mind that this local modification is necessary when the direction associated with the smallest surcharge relative to the chosen direction $t$ is not aligned with a principal axis of $\Z^d$, as illustrated in Figure~\ref{fig:clusters}, \textsc{top left}.

The set defined above will be used to define the notion of $t$-confined and $t$-irreducible clusters.
Let $\frakc = (V(\frakc), E(\frakc))$ be a finite connected subgraph of $\Z^d$, and let $\kappa \in (0,1)$. 
We say that $\frakc$ is \emph{$t$-diamond confined with surcharge} $\kappa$ if there exist $v,w \in V(\frakc)$ such that $V(\frakc) \subset \calD^t_\kappa(v,w)$.
In that case, $v$ (resp. $w$) is unique and is called $\mathsf{f}(\frakc)$ (resp. $\mathsf{b}(\frakc)$).
We refer to Figure~\ref{fig:clusters} for illustration.
The \emph{displacement} of a $t$-diamond confined cluster is defined by:
\begin{equation}
X(\frakc) \coloneqq  \mathsf{b}(\frakc) - \mathsf{f}(\frakc).
\end{equation}

Let $\frakc$ be $t$-diamond confined with surcharge $\kappa$. 
We say that $\frakc$ is $(\kappa, t)$-irreducible if it cannot be written as a union of two $t$-diamond confined subgraphs $\frakc_d$ and $\frakc_d'$, with surcharge~$\kappa$, such that $\mathsf{b}(\frakc_d) = \mathsf{f}(\frakc_d')$. We denote by $\frakC^t_\kappa$ the set of $(\kappa, t)$-irreducible subgraphs of $\Z^d$. 

\begin{figure}
\begin{center}
    \includegraphics[width = .40\textwidth]{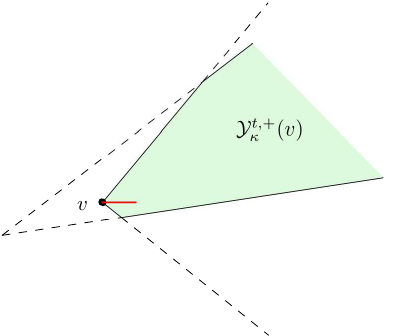} \hspace{.02\textwidth} 
    \includegraphics[width = .40\textwidth]{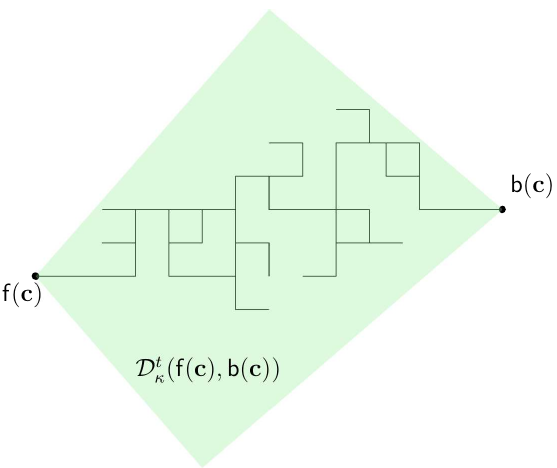}
        \includegraphics[width = .80\textwidth]{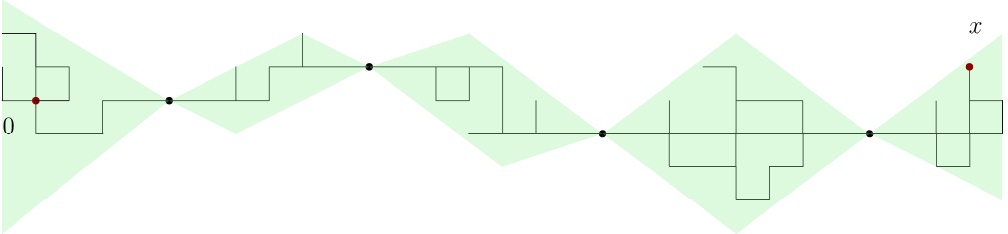}
    \caption{
    \textsc{top left}: Example of the local surgery needed for the non-triviality of the construction. The cone $\calY_\kappa^{t,+}(v)$ is enlarged so as to contain the red horizontal edge. 
    The actual asymptotic cone corresponds to the smaller opening. For simplicity, in remainder of the figure, we \textbf{choose $t$ in the direction of the first coordinate vector} since no local surgery is needed in this case. \textsc{top right}: A $t$-irreducible cluster. The green region denotes the $(t,\kappa)$-confining diamond region: note its particular shape.
    \textsc{bottom}: The diamond decomposition \emph{à la} Campanino--Ioffe--Velenik. 
    The vertices 0 and $x$ are represented in dark red. 
    Note that the first and the last pieces have a slightly different shape: 
    they are only backward and forward confined respectively. 
    The argument of \cite{campanino_ornstein-zernike_2003, campanino_fluctuation_2008} essentially shows that when considering the rate of decay of connection probabilities from $0$ to $x$ one can neglect these extra pieces and focus on the decompositions in which these two boundary pieces are trivial clusters.}
    \label{fig:clusters} 
\end{center}
\end{figure} 

Fix $\frakc \in \frakC^t_\kappa$. 
Define the \emph{outer edge boundary} of $\frakc$ as the set
\begin{equation} 
\partial E(\frakc) \coloneqq \{e = \{x,y\} \in \Z^d, x \in V(\frakc), y\notin V(\frakc)\}. 
\end{equation}
Observe that due to the $t$-diamond confinement and by the local surgery, the two edges 
$e_- \coloneqq \{ \mathsf{f}(\frakc), \mathsf{f}(\frakc) - (1, 0, \dots, 0) \}$ 
and 
$e_+ \coloneqq \{ \mathsf{b}(\frakc), \mathsf{b}(\frakc) +(1, 0, \dots, 0) \}$ 
automatically belong to $\partial E(\frakc)$ ; they will play a special role in what follows. 

The main interest of $\frakC_{\kappa}^t$ is that, the quantity
$
\e^{\langle t, x\rangle}\phi[ 0 \lra \lfloor nx \rfloor ]
$
is exponentially dominant when $x$ lies in a $t$-cone of small surcharge $\kappa$. Moreover, it is proven in \cite{campanino_ornstein-zernike_2003, campanino_fluctuation_2008} that for such values of $x$, this quantity admits a decomposition as a concatenation of irreducible elements $\frakc$ of $\frakC^t_{\kappa}$, as shown in Figure \ref{fig:clusters}. 
This suggests that the study of the polar dual of the mass $\fraky(p,x)$ for $x$ close to $t$ reduces to that of a stochastic process generating chains $\frakc_1,\frakc_2,\frakc_3,\dots$ of clusters of $\frakC^t_{\kappa}$.

\subsubsection{$\theta$-exponential Hölder functions on strings of irreducible clusters}

We adopt the framework of \cite{campanino_ornstein-zernike_2003, campanino_fluctuation_2008} to deal with chains of element of $\frakC^t_{\kappa}$. Let us briefly recall the main definitions of this framework.

Note that the trivial cluster $\frakc = \{0\}$ is also an element of $\frakC^t_{\kappa}$. Since this element plays a major role in the following, we shall denote it $\emptyset$. We work with the following state space of chains of clusters 
\begin{equation}
\Omega^t_{\kappa} \coloneqq \{ \bar{\frakc} \in (\frakC^t_\kappa)^{\Z_{>0}}:\forall k \geq 1, \mathsf{f}(\bar{\frakc}_k) = 0~\mbox{and}~  \forall k, \ell > 0,  \bar{\frakc}_k = \emptyset \Rightarrow \bar{\frakc}_{k+\ell} = \emptyset \},
\end{equation}
where $\bar\frakc_k$ denotes the $k$-th element of the chain $\bar\frakc$. The element $\emptyset$ is merely a formal device for completing finite chains of clusters into infinite ones. For two strings $\bar{\frakc}, \bar{\frakc}' \in \Omega_{\kappa}^t$, define their proximity index by 
\begin{equation} \bfi(\bar{\frakc}, \bar{\frakc}') \coloneqq \min\{k \geq 1, \bar{\frakc}_k \neq \bar{\frakc}'_k\}. \end{equation} 
For $f : \Omega_{\kappa}^t \rightarrow \C$, and $k \in \Z_{>0}$,
define the $k$-\emph{variation} of $f$ as
\begin{equation}
\Var_k(f) \coloneqq \sup_{\{\bar{\frakc}, \bar{\frakc}'~:~\mathbf{i}(\bar\frakc, \bar\frakc') \geq k\}} |f(\bar{\frakc}) - f(\bar{\frakc}')|.
\end{equation}
For any $\theta \in (0,1)$, define the following $\theta$-Hölder measurement:
\begin{equation}
|f|_\theta \coloneqq \sup_{k> 1} \frac{\Var_k(f)}{\theta^{k}} \in [0, +\infty]. 
\end{equation}
A function $f$ satisfying $|f|_\theta < \infty$ is called \emph{$\theta$-exponential Hölder}. 
Define the functional~set
\begin{equation}
\calF^t_{\kappa, \theta} \coloneqq \{ f  : \Omega_{\kappa}^t  \rightarrow \C ,~ \Vert f \Vert_\infty + |f|_\theta < \infty \}. 
\end{equation}
Note that $\calF^t_{\kappa, \theta}$ is a Banach space when equipped with the norm $\Vert f \Vert_\theta = |f|_\theta +~\Vert f \Vert_\infty$.

\subsubsection{The FK-Ruelle potential}

For $\frakc \in \frakC_{\kappa}^t \setminus \{\emptyset\}$, define the following percolation events:

 \begin{equation}\label{equ:evts_ins_out} \mathsf{Ins}(\frakc) \coloneqq \bigcap_{e\in E(\frakc)} \{ \om_e = 1\}
\quad\text{ and }\quad
 \mathsf{Out}(\frakc) \coloneqq \bigcap_{e\in \partial E(\frakc) \setminus\{e_-, e_+\} }  \{ \om_e = 0  \}
. \end{equation}
 
When $\frakc = \emptyset$, both events are defined to be the total event, the probability of which is~$1$. 
We are now ready to the define the main protagonist of this article. Recall that the parameters $p \in (0,p_c)$ and $t \in \partial\calW_p$ are fixed throughout the paper.

\begin{definition}[FK-Ruelle potential]
For $\kappa \in (0,1)$ and $s \in \R^d$, the \emph{FK-Ruelle} potential is defined as 
\begin{equation}\label{equ:ruelle_potential}
\psi^s_{ \Omega^t_\kappa, p} : \bar{\frakc}\in \Omega_{\kappa}^t \mapsto \log \phi^0_{\bar{\frakc}_1,p}[\mathsf{Ins}(\bar{\frakc}_1)] + \log \phi^0_p [ \mathsf{Out}(\bar{\frakc}_1) \; \vert \bigcap_{k=2}^\infty \mathsf{Out}(X_k + \bar{\frakc}_k)] + \langle s, X(\bar\frakc_1) \rangle  \in \R,
\end{equation}
where $X_k$ is defined by $X_k \coloneqq \sum_{i=1}^{k-1} X(\bar{\frakc}_i)$ and $X_k + \cdot$ denotes the translation by $X_k$.
\end{definition}

The finite-energy property of the model directly implies that the FK-Ruelle potential is a well defined real-valued function. 
Due to the translation by $X_k$ and the condition $\mathsf{f}(\bar{\frakc}_k) = 0$ in the definition of $\Omega_{\kappa}^t$, the clusters $(X_k + \bar{\frakc}_k)_{k \in \Z_{>0}}$ form a chain, where successive clusters share exactly one vertex (see Figure~\ref{fig:clusters}). 
Note that $\exp(\psi^t_{ \Omega^t_\kappa, p}(\bar\frakc) - \langle t, X(\bar\frakc_1) \rangle)$ is exactly the probabilistic cost of the first cluster of the chain $\bar{\frakc}_1$ conditionally on the rest of the chain. 
Thus, the FK--Ruelle potential is essentially a transition kernel for chains of irreducible clusters, with a bias on the displacement of $\bar\frakc_1$ in the direction given by $t$. 

We will prove the analyticity of $\fraky$ around the point $(p,t)$, by studying the FK-Ruelle potential around the same  point.
 
From the FK--Ruelle potential  $\psi^s_{ \Omega^t_\kappa, p}$, we move on to the following definition:

\begin{definition}[FK--Ruelle operator]
For $\kappa \in (0,1)$ and $s \in \R^d$, define an operator $\frakL_{\psi^s_{ \Omega^t_\kappa, p}}$ by setting, for any $f \in \calF^t_{\kappa, \theta}$ and $\bar\frakc \in \Omega^t_\kappa$, 
\begin{equation}\label{equ:def_FK_ruelle_operator}
[\frakL_{\psi^s_{ \Omega^t_\kappa, p}} f](\bar{\frakc})\coloneqq \sum_{\bar{\frakc}'\neq \emptyset:~\sigma(\bar{\frakc}') = \bar{\frakc}}\e^{\psi^s_{ \Omega^t_\kappa, p}(\bar{\frakc}')}f(\bar{\frakc}'), 
\end{equation}
where the \emph{Bernoulli shift} $\sigma$ is defined on $\Omega_{\kappa}^t$ by $\sigma(\bar{\frakc}')=(\bar{\frakc}'_{k+1})_{k\geq 1}$ and $\bar{\frakc}'\neq \emptyset$ means that the chain $\bar{\frakc}' \in \Omega_{\kappa}^t$ is not the trivial chain consisting only of trivial clusters $\emptyset$.
\end{definition}

\begin{remark}
The notation is unfortunately quite heavy, as we will need to perturb this operator in both $p$ and $s$, while the operator acts in the fixed functional set $\calF^t_{\kappa, \theta}$. 
As will be shown later, the operator is of main interest for values of the tilting parameter $s$ that are close to $t$. 
\end{remark}

{ \bf In the remainder of the paper, the base space $ \Omega^t_\kappa$ will be fixed, and thus dropped from the notation for readability. 
The reader should however keep in mind that $ \psi^s_{p}$ and $\frakL_{\psi^s_{ p}}$ are heavily $t$-dependent.}

\subsection{An holomorphic perturbation of the FK--Ruelle operator with complex mass-gap}

The novelty of our approach lies in the next result, which is the core of our method. 
Informally, we construct extensions of the operator $\frakL_{\psi^s_{ p}}$, in which both $p$ and $s$ are allowed to take \emph{complex} values in small neighbourhoods of real parameters.
Furthermore, those complex operators are shown to satisfy a \emph{complex mass-gap} condition, which will be important in what follows.

For $z \in \bbC$ and $\delta > 0$, we denote by $\bbD_\delta(z)$ the open disk of radius $\delta$ centered at $z$. 
When $z$ is the origin $0$, we shall just write $\bbD_\delta$. 
If $w$ is a vector of $\R^d$, then $w$ can be seen as an element of $\C^d$ and we still denote by $\D_{\delta}(w)$ the disc of radius $\delta$ centered at $w$ in $\C^d$. 
As its use will be recurrent in what follows, we introduce the following polydisk:
 \begin{equation}
 \calU_{\delta} \coloneqq \calU_{t,\delta} \coloneqq \bbD_{\delta} \times \bbD_{\delta}(t) \subset \C^{d+1}.
 \end{equation}
We will consider small perturbations $(p+z,s)$, with $(z,s) \in \calU_{\delta}$, around the point $(p,t)$. 
\newpage
 \begin{theorem}
 \label{thm:complex_reg_ruelle}
There exist $\kappa > 0$ and $\delta>0$ sufficiently small such that:
\begin{enumerate}
\item For any $\bar\frakc \in \Omega^t_\kappa$, 
the real function 
\begin{equation}
(z,s) \in  \calU_{\delta} \cap \R^{d+1} \mapsto \psi^{s}_{p+z}(\bar\frakc) \in \R
\end{equation}
is analytically extendable to the polydisk $\calU_{\delta}$.
\item The complex extension satisfies the following regularity properties:
\begin{enumerate}
\item There exists $\theta \in (0,1)$ such that  
\begin{equation}\label{equ:complex_theta_bound_potential}
\tag{$\bbC$-Hölder}
\sup_{(z,s) \in \calU_{\delta}} | \psi^{s}_{p+z} |_{\theta}  < +\infty.
\end{equation}
\item The following uniform bound is satisfied: 
\begin{equation}\label{equ:complex_summability_potential}
\tag{$\bbC$-MG}
\sup_{(z,s) \in \calU_{\delta}}  \sup_{\bar{\frakc} \in \Omega_\kappa^t} \sum_{\bar{\frakc}' \neq \emptyset:~\sigma(\bar{\frakc}') = \bar{\frakc}} \vert \e^{\psi^{s}_{p+z}(\bar{\frakc}')} \vert < +\infty.
\end{equation}
\end{enumerate}
 \end{enumerate}
 \end{theorem}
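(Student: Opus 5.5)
We write $\psi^s_{p+z}(\bar\frakc) = A_{p+z}(\bar\frakc_1) + B_{p+z}(\bar\frakc) + \langle s, X(\bar\frakc_1)\rangle$. Here $A_{p}(\frakc)=\log\phi^0_{\frakc,p}[\mathsf{Ins}(\frakc)]$ is a finite-volume quantity, and $B_p(\bar\frakc)$ is the conditional $\mathsf{Out}$ term. We treat the three pieces separately.

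\textbf{The linear and finite-volume pieces.} The tilt $\langle s,X(\bar\frakc_1)\rangle$ is linear in $s$ and depends only on $\bar\frakc_1$, so it contributes nothing to $|\cdot|_\theta$. It contributes a factor $|\e^{\langle s, X\rangle}| \le \e^{\langle t,X\rangle}\e^{\sqrt d\,\delta |X|}$ to \eqref{equ:complex_summability_potential}.

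The term $A_{p+z}(\frakc)$ is the logarithm of a ratio of two polynomials in $p/(1-p)$, on a finite graph, which are positive at real $p$. It therefore extends holomorphically to a neighbourhood of $p$ that depends on $\frakc$. The difficulty is uniformity in $\frakc$: we need $|A_{p+z}(\frakc)-A_p(\frakc)|\le C\delta|E(\frakc)|$ on the disk.

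We will obtain this by writing $\phi^0_{\frakc,p}[\mathsf{Ins}(\frakc)]$ as a telescoping product of single-edge conditional probabilities. The complexified single-edge probabilities are ratios of complexified FK partition functions with boundary conditions. We control them edge by edge using the uniform analytic extension and non-vanishing results of the companion paper~\cite{dalimonte_analyticity_2026}, possibly after a cluster expansion in the subcritical regime. That regime provides exponential decay of the influence of distant edges, so each logarithmic factor varies by $O(\delta)$.

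\textbf{The conditional $\mathsf{Out}$ term.} This is where the chain structure enters. We write
\[
B_p(\bar\frakc)=\log\phi^0_p\Big[\mathsf{Out}(\bar\frakc_1)\cap\bigcap_{k\ge2}\mathsf{Out}(X_k+\bar\frakc_k)\Big]-\log\phi^0_p\Big[\bigcap_{k\ge2}\mathsf{Out}(X_k+\bar\frakc_k)\Big].
\]
Both quantities are probabilities that a specified (possibly infinite) set of edges is closed. Closing edges is a decreasing event, and conditioning on it makes the environment ``more subcritical''. We expand each logarithm via a polymer or cluster expansion \emph{à la}~\cite{ott_weak_2020-1}.

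Concretely, we truncate the chain to the first $n$ clusters and expand the log-ratio as a sum over connected polymers (open FK-clusters in the complement of the closed set) weighted by complexified activities. Convergence of this expansion at $p+z$ uses the subcritical exponential decay at $p$, together with the observation that conditioning on closed edges only improves the decay. For the complex perturbation we need a uniform bound on complex activities of the form $|w_{p+z}(\gamma)|\le w_{p'}(\gamma)\e^{C\delta|\gamma|}$, with $p'<p_c$ slightly larger than $p$.

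The resulting key statement is that only polymers touching $\bar\frakc_1$ contribute to the difference. The contribution of polymers that reach both $\bar\frakc_1$ and $X_k+\bar\frakc_k$ decays like $\e^{-c\,\mathrm{dist}}$, and the distance grows linearly in $k$ because of diamond confinement (this is where $\kappa$ small is used). This gives:

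\begin{itemize}
\item analyticity, as a uniform limit of holomorphic functions in $n$;
\item $\Var_k(\psi^s_{p+z})\le C\theta^k$ uniformly on $\calU_\delta$, which is \eqref{equ:complex_theta_bound_potential};
\item $|B_{p+z}(\bar\frakc)-B_p(\bar\frakc)|\le C\delta|\partial E(\bar\frakc_1)|$.
\end{itemize}

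\textbf{Summability.} Combining the estimates above, for any $\bar\frakc'$ we get
\[
|\e^{\psi^s_{p+z}(\bar\frakc')}|\le \e^{\psi^t_p(\bar\frakc')}\e^{C\delta|\bar\frakc'_1|}.
\]
Summing over the first cluster $\frakc$ then reduces \eqref{equ:complex_summability_potential} to the exponential mass-gap of irreducible clusters from~\cite{campanino_ornstein-zernike_2003,campanino_fluctuation_2008}: $\sum_{\frakc:|\frakc|\ge n}\e^{\langle t,X(\frakc)\rangle}\phi[\frakc\text{ realized}]\le \e^{-\nu n}$ for some $\nu>0$, for $\kappa$ small. Taking $C\delta<\nu$ closes the bound.

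\textbf{Main obstacle.} The hard step is the uniform-in-volume convergence of the complexified cluster expansion for the conditional $\mathsf{Out}$ terms in the whole subcritical regime, not just perturbatively. We will try first to run the expansion on a coarse-grained scale, using sharpness~\cite{duminil-copin_sharp_2019} at scale $L$ to make the base activities small, and then absorb the complex perturbation as a factor $\e^{C\delta|\gamma|}$. The appendix counterexample warns that this only works for events of this special decreasing, closed-edge form.
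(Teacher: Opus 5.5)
Your architecture is the paper's: split $\psi$ into the $\mathsf{Ins}$ term, the $\mathsf{Out}$ term on the graph with the later boundaries deleted, and the tilt; cancel polymers not touching $\bar\frakc_1$; use mixing along the chain for the H\"older bound; absorb a volume perturbation $\e^{c_\eps|\bar\frakc'_1|}$ in the summability step. \textbf{The gap is the expansion itself, which you leave as the ``main obstacle''.} The expansion you describe (polymers $=$ open FK-clusters in the complement of the closed set, activities $(p/(1-p))^{|E(\gamma)|}$ times powers of $q$) is the high-temperature representation. It converges only perturbatively, not on all of $(0,p_c)$: exponential decay of the \emph{measure} does not control these activities, which are not probabilities. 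Your fallback (coarse-grain so that \emph{real} activities are small, then bound $|w_{p+z}(\gamma)|\le w_{p'}(\gamma)\e^{C\delta|\gamma|}$) presupposes a convergent expansion of $\log\phi^0_{G,p}[\cdot]$ itself at any subcritical $p$, uniformly in $G$. You do not construct one.

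The paper never expands $\log\phi^0_{G,p}$. It expands only the ratio $\mathbf{Z}^0_{G,p+z}/\mathbf{Z}^0_{G,p}$ as a polymer gas on $\Z^d_L$, via Ott's Glauber-dynamics construction (Proposition~\ref{prop:clust_exp_comp}). The weights are small because $z$ is small ($C_\eps\to0$). They decay like $\e^{-c|\gamma|}$ because the real measure mixes exponentially, uniformly over subgraphs (Lemma~\ref{lem:ratio_mixing_uniform_G}); this is where your ``deleting edges only helps'' enters. Crucially, the weights are \emph{compatible}: $\sfw_{G,z}(\gamma)$ depends only on $\mathsf{Span}(\gamma)\cap G$. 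This is what justifies the cancellations you assert. Combined with Lemma~\ref{lem:dec_UDE}, which rewrites $\log(\phi^0_{H,p+z}[A]/\phi^0_{H,p}[A])$ as $o(\om_A)\log\alpha_z$ plus partition-function ratios for $H$ and $H\setminus\mathsf{Supp}_A$, it gives Theorem~\ref{thm:clust_exp_UDE} and Proposition~\ref{prop:complex_mixing_UDE}.

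This affects three of your steps.
\begin{enumerate}
\item \textbf{The $\mathsf{Ins}$ term.} Your telescoping over single-edge conditionals conditions on \emph{open} edges. This produces FK measures on contracted (partially wired) graphs, which are not subgraphs of $\Z^d$. The companion paper gives no uniform non-vanishing there, and the appendix counterexample shows such non-vanishing fails for general local events. The telescoping is also unnecessary: $\mathsf{Ins}(\frakc)$ is a UD-event on $\frakc$. Its increment is $|E(\frakc)|\log\alpha_z-\log(\mathbf{Z}^0_{\frakc,p+z}/\mathbf{Z}^0_{\frakc,p})$, which is at most $c_\eps|\frakc|$ by the same expansion.
\item \textbf{The H\"older bound \eqref{equ:complex_theta_bound_potential}.} Since only increments are expanded, you cannot read $\Var_k(\psi^s_{p+z})\le C\theta^k$ off the expansion. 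Instead, combine the real bound \eqref{equ:real_theta_bound_potential} of Campanino--Ioffe--Velenik with Proposition~\ref{prop:complex_mixing_UDE}, applied to $G_{\bar\frakc}$, $G_{\bar\frakc'}$ and $A=\mathsf{Out}(\bar\frakc_1)$. Diamond confinement makes $\mathrm{d}(\mathsf{Supp}_A,E(G)\Delta E(G'))$ grow linearly in $\bfi$.
\item \textbf{Summability (minor).} The Campanino--Ioffe--Velenik mass gap is indexed by displacement, i.e.\ it is \eqref{equ:velenik_mass_gap}, not by $|\frakc|$. Your volume-indexed version must be derived, as the paper does. Split according to $|\bar\frakc'_1|\gtrless K\langle t,X(\bar\frakc'_1)\rangle$. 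On the first part use exponential tails of $|\calC_0|$; on the second use \eqref{equ:velenik_mass_gap} at $s=(1+c_\eps K)t$.
\end{enumerate}
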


As suggested by the naming,~\eqref{equ:complex_theta_bound_potential} is a uniformly complex exponential Hölderness condition.
We call~\eqref{equ:complex_summability_potential} a \emph{complex mass-gap condition}.

\begin{remark}\label{remq:real_mass_gap}
Theorem~\ref{thm:complex_reg_ruelle} is an improvement of the verification of the \emph{mass-gap condition} in~\cite{campanino_fluctuation_2008}, which was crucially used to derive the Ornstein--Zernike asymptotic behaviour of the two-point function of subcritical FK-percolation. In particular, it was proved in this article that with the notation of Theorem~\ref{thm:complex_reg_ruelle}, there exists $\kappa, \delta > 0$ such that 
\begin{equation}\label{equ:real_theta_bound_potential}
\tag{$p$-Hölder}
\sup_{s \in \bbD_\delta(t)} | \psi^{s}_{p} |_{\theta}  < +\infty.
\end{equation}
and that
\begin{equation}\label{equ:velenik_mass_gap}
\tag{$p$-MG}
\sup_{s \in \bbD_\delta(t)} \sup_{\bar{\frakc} \in \Omega_\kappa^t} \sum_{\bar{\frakc}' \neq \emptyset:~\sigma(\bar{\frakc}') = \bar{\frakc}} \vert \e^{\psi^{s}_{p}(\bar{\frakc}')} \vert < +\infty.
\end{equation}

Thus, observe that Theorem~\ref{thm:complex_reg_ruelle} would be novel even if $z$ was chosen to lie in a real neighbourhood of 0.
\end{remark}

The proof of Theorem \ref{thm:complex_reg_ruelle} is the main innovation of this paper, and Section~\ref{sec:exp_bounds} is dedicated to it. 
We use it as our main tool for the next two subsections. 
We already note that Theorem~\ref{thm:complex_reg_ruelle} implies that the complexified operators $\frakL_{\psi_{p+z}^s}$ are all well-defined and bounded on the space $\calF^t_{\kappa, \theta}$. 
Indeed, this follows from the classical \emph{Lasota--Yorke inequality} that we recall below.

\begin{lemma}[Lasota--Yorke inequality] \label{lem:LY_inequality}
Call
\begin{equation} 
M \coloneqq \sup_{(z,s) \in \calU_{\delta}}  \sup_{\bar{\frakc} \in \Omega_\kappa^t} \sum_{\bar{\frakc}' \neq \emptyset: \sigma(\bar{\frakc}') = \bar{\frakc}} |\e^{\psi^{s}_{p+z}(\bar{\frakc}')}| < +\infty
\end{equation} 
Then, for any $f \in \calF^t_{\kappa, \theta}$, and $(z,s) \in \calU_{\delta}$,
\begin{equation}\label{equ:lasota-yorke}
\Vert \frakL_{\psi_{p+z}^s}  f \Vert_\theta \leq M\theta|f|_\theta + M(1+\theta\vert \psi_{p+z}^s \vert_\theta\e^{\theta\vert \psi_{p+z}^s\vert_\theta}) \Vert f \Vert_\infty.
\end{equation}
In particular, if $\delta>0$ is fixed according to Theorem \ref{thm:complex_reg_ruelle}, then the family $(\frakL_{\psi_{p+z}^s})_{(z,s) \in\calU_{\delta}}$ is uniformly bounded as a family of linear operators on $\calF^t_{\kappa,\theta}$. 
\end{lemma}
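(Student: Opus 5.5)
We prove the Lasota--Yorke inequality directly from the definition of the operator. We bound the sup-norm and the $\theta$-Hölder seminorm of $g \coloneqq \frakL_{\psi^s_{p+z}} f$ separately, and then add the two bounds. Throughout, write $\psi = \psi^s_{p+z}$ and fix $(z,s) \in \calU_\delta$.

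\emph{Sup-norm.} For every $\bar\frakc$, the triangle inequality gives
\[
|g(\bar\frakc)| \leq \sum_{\bar\frakc' \neq \emptyset:\, \sigma(\bar\frakc') = \bar\frakc} |\e^{\psi(\bar\frakc')}|\,\Vert f\Vert_\infty \leq M \Vert f\Vert_\infty .
\]
This uses~\eqref{equ:complex_summability_potential} of Theorem~\ref{thm:complex_reg_ruelle}.

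\emph{Variation.} Take $\bar\frakc, \bar\frakd$ with $\bfi(\bar\frakc,\bar\frakd) \geq k$. The preimages under $\sigma$ are obtained by prepending a cluster $\frakc_0 \in \frakC^t_\kappa$, so we pair $\frakc_0\bar\frakc$ with $\frakc_0\bar\frakd$. These paired chains satisfy $\bfi(\frakc_0\bar\frakc,\frakc_0\bar\frakd) \geq k+1$.

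One technical point needs care. The constraint "$\emptyset$ is absorbing" in $\Omega^t_\kappa$ means that $\frakc_0 = \emptyset$ is admissible only when the tail is trivial, and the condition $\bar\frakc' \neq \emptyset$ excludes only the fully trivial chain. We therefore have to check that the admissible prepended clusters are the same for $\bar\frakc$ and $\bar\frakd$ when $k \geq 2$, which holds since their first entries agree. The case $k = 1$ is absorbed by the sup-norm bound, because the seminorm only takes $k>1$. If the preimage sets differed in an edge case, we would instead use the crude bound $2M\Vert f\Vert_\infty$, which is compatible with the right-hand side.

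Writing $\bar\frakc'=\frakc_0\bar\frakc$ and $\bar\frakd'=\frakc_0\bar\frakd$, we then split each term as
\[
\e^{\psi(\bar\frakc')}f(\bar\frakc') - \e^{\psi(\bar\frakd')}f(\bar\frakd') = \e^{\psi(\bar\frakc')}\big(f(\bar\frakc') - f(\bar\frakd')\big) + \big(\e^{\psi(\bar\frakc')} - \e^{\psi(\bar\frakd')}\big) f(\bar\frakd').
\]
For the first term, $|f(\bar\frakc') - f(\bar\frakd')| \leq \Var_{k+1}(f) \leq |f|_\theta \theta^{k+1}$. Summing over $\frakc_0$ with the bound $M$ gives a contribution at most $M\theta\,|f|_\theta\,\theta^k$.

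For the second term, factor $\e^{\psi(\bar\frakd')}\big(\e^{\psi(\bar\frakc')-\psi(\bar\frakd')}-1\big)$ and apply $|\e^w - 1| \leq |w|\e^{|w|}$ with $|w| \leq \Var_{k+1}(\psi) \leq |\psi|_\theta \theta^{k+1} \leq \theta |\psi|_\theta\theta^k$. Since $\theta^k\leq1$, the exponential factor is at most $\e^{\theta|\psi|_\theta}$. Summing again with the bound $M$, this term contributes at most $M\theta|\psi|_\theta\e^{\theta|\psi|_\theta}\Vert f\Vert_\infty\,\theta^k$.

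Dividing by $\theta^k$ and taking the supremum over $k$ gives
\[
|g|_\theta \leq M\theta|f|_\theta + M\theta|\psi|_\theta\e^{\theta|\psi|_\theta}\Vert f\Vert_\infty .
\]
Adding the sup-norm bound yields~\eqref{equ:lasota-yorke}.

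For the uniform boundedness, the right-hand side of~\eqref{equ:lasota-yorke} is at most $C\Vert f\Vert_\theta$ with $C = M\big(1+\theta+\theta L\e^{\theta L}\big)$. Here $L \coloneqq \sup_{\calU_\delta}|\psi^s_{p+z}|_\theta$ is finite by~\eqref{equ:complex_theta_bound_potential}, and $M$ is finite by~\eqref{equ:complex_summability_potential}. Hence the family $(\frakL_{\psi^s_{p+z}})_{(z,s)\in\calU_\delta}$ is uniformly bounded on $\calF^t_{\kappa,\theta}$.
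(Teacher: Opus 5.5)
Your proof is correct and follows the paper's argument. It uses the same sup-norm bound, the same pairing of preimages through a common prepended cluster (which raises the proximity index by one), and the same two-term splitting. Your step $|\e^w-1|\le|w|\e^{|w|}$ after factoring out $\e^{\psi(\bar\frakd')}$ just repackages the paper's mean-value inequality plus its comparison of $|\e^{\psi}|$ at two chains sharing their first symbol, and it gives the same constant.

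The fallback about a crude $2M\Vert f\Vert_\infty$ bound is unnecessary, and it would not survive division by $\theta^k$. It never comes into play, because the admissible prepended clusters are always exactly the non-trivial clusters with $\mathsf{f}=0$, whatever the tail.
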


\begin{proof}
Let $f \in \calF^t_{\kappa,\theta}$, and fix $(z,s) \in \calU_{\delta}$. 
The uniform norm is immediately bounded as follows:
\begin{equation} \label{equ:bound_infinite_norm}
\Vert \frakL_{\psi_{p+z}^s} f \Vert_\infty \leq M\Vert f \Vert_\infty. 
\end{equation}

Let $k \in \Z_{\geq 1}$, and $\bar{\frakc}, \bar{\frakc}' \in \Omega$ satisfying $\bfi(\bar{\frakc}, \bar{\frakc}') = k$. 
We couple the preimages $\bar{\frakd}$ and $\bar{\frakd}'$ having the same first symbol in the next two sums, so that $\bfi(\bar{\frakd},\bar{\frakd}') = k+1$, to write
\begin{multline}\label{equ:1_LY}
 |\frakL_{\psi_{p+z}^s} f(\bar{\frakc}) - \frakL_{\psi_{p+z}^s} f(\bar{\frakc}')| \\ \leq  \sum_{\bar{\frakd} \neq \emptyset :\sigma(\bar{\frakd}) = \bar{\frakc} } | \e^{\psi_{p+z}^s(\bar{\frakd})}(f(\bar{\frakd})- f(\bar{\frakd}'))| + \sum_{\bar{\frakd}' \neq \emptyset:\sigma(\bar{\frakd}')=\bar{\frakc}'} |f(\bar{\frakd}')(\e^{\psi_{p+z}^s(\bar{\frakd})}-\e^{\psi_{p+z}^s(\bar{\frakd}')})|.
\end{multline}
The first sum in~\eqref{equ:1_LY} is bounded by $M|f|_\theta \theta^{k+1}$. For bounding the second sum, the mean value inequality yields
\begin{equation} |\e^{\psi_{p+z}^s(\bar{\frakd})} -\e^{\psi_{p+z}^s(\bar{\frakd}')}| \leq  \max \big\{\vert \e^{\psi_{p+z}^s(\bar{\frakd})} \vert,\, \vert e^{\psi_{p+z}^s(\bar{\frakd}')} \vert \big\} |\psi_{p+z}^s|_\theta \theta^{k+1}. \end{equation}
Now, observe that, as $\bar{\frakd}$ and $\bar{\frakd}'$ coincide on at least one symbol, then 
\begin{equation} 
\vert \e^{\psi_{p+z}^s(\bar{\frakd})} \vert \leq \e^{\theta|\psi_{p+z}^s|_\theta} \vert \e^{\psi^s_{p+z}(\bar{\frakd}')}\vert . 
\end{equation}
Hence we get the following upper bound for the second sum in~\eqref{equ:1_LY}:
\begin{equation}
|\psi_{p+z}^s|_\theta \theta^{k+1} \sum_{ \bar{\frakd}' \neq \emptyset:\sigma(\bar{\frakd}')=\bar{\frakc}'} |f(\bar{\frakd}')|\max \big\{\vert \e^{\psi_{p+z}^s(\bar{\frakd})} \vert,\, \vert e^{\psi_{p+z}^s(\bar{\frakd}')} \vert \big\}  \leq |\psi_{p+z}^s|_\theta \theta^{k+1} M \e^{\theta|\psi_{p+z}^s|_\theta} \Vert f \Vert_\infty.
\end{equation}
Combining those two bounds, and dividing by $\theta^{k}$ yields:
\begin{equation}\label{equ:final_LY}
|\frakL_{\psi_{p+z}^s} f|_\theta \leq M\theta(|f|_\theta + |\psi^s_{p+z}|_\theta\e^{\theta|\psi_{p+z}^s|_\theta}\Vert f \Vert_\infty). 
 \end{equation}
 
Summing~\eqref{equ:bound_infinite_norm} and~\eqref{equ:final_LY} concludes the proof of~\eqref{equ:lasota-yorke}.
\end{proof}

\subsection{An identity between the spectral radius of the FK--Ruelle operator and the mass}

In this subsection, we do not use the full power of the last subsection, but rather only study a \emph{real} perturbation of the operator $\frakL_{\psi_p^s}$. 
The aim is to prove~\eqref{equ:corr_length_eigenvalue_Ruelle}, which is an identity relating the inverse of the polar dual of the mass $\fraky(p,\cdot)$ with the spectral radius of the FK--Ruelle operator. 
In what follows, we will not make any further reference to the parameters $\kappa \in (0,1)$ and $\theta \in (0,1)$ , and will let them be implicitly fixed by Theorem~\ref{thm:complex_reg_ruelle}. 
We also fix $\delta > 0$ according to Theorem~\ref{thm:complex_reg_ruelle}.

Let us take a \emph{real} pair $(z,s) \in \calU_{\delta} \cap \bbR^{d+1}$. 
Recall that Lemma~\ref{lem:LY_inequality} implies that the operator $\frakL_{\psi_{p+z}^s}$ is bounded: we will be interested in its spectral radius, that we call $\rho_{t}(p+z,s)$ in what follows.
Recall that $t$ is a fixed vector that denotes the direction in which the irreducible decomposition is performed. 
The spectral radius $\rho_t$ is related to the mass of the model via the fundamental identity below. 

\begin{theorem} \label{thm:corr_length_eigenvalue_Ruelle}
For any $(z,s) \in \calU_{\delta} \cap\R^{d+1}$, the following relation holds: 
\begin{equation}\label{equ:corr_length_eigenvalue_Ruelle}
\tag{$\star$}
\rho_{t}(p+z,\fraky(p+z,s)\cdot s) = 1.
\end{equation}
\end{theorem}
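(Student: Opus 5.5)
We identify the spectral radius $\rho_t(p',s')$, with $p' \coloneqq p+z$ real and $s'$ real near $t$, with the exponential growth rate of a weighted sum over concatenations of irreducible clusters. We then show that this sum has zero exponential rate exactly when $s' = \fraky(p',s)\,s$. The natural route is through iterates of the operator. By the Ruelle--Perron--Frobenius framework (the real mass gap \eqref{equ:velenik_mass_gap} and Hölder bound \eqref{equ:real_theta_bound_potential}, together with the Lasota--Yorke inequality, Lemma~\ref{lem:LY_inequality}), the operator $\frakL_{\psi^{s'}_{p'}}$ is positive. Its spectral radius is therefore given by
\[
\rho_t(p',s') = \lim_{n\to\infty} \big( [\frakL^n_{\psi^{s'}_{p'}}\mathbf 1](\emptyset) \big)^{1/n},
\]
evaluated at the trivial chain $\emptyset$; positivity together with the bounded-distortion estimate coming from Hölder continuity makes the choice of base point irrelevant. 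Unfolding the definition, $[\frakL^n \mathbf 1](\emptyset)$ is the sum over strings $(\frakc_1,\dots,\frakc_n)$ of non-trivial irreducible clusters of $\e^{\langle s', X_{n+1}\rangle}$ times a telescoping product of conditional probabilities. This product equals $\phi^0_{p'}$ of the event that the concatenated chain $\bigcup_k (X_k+\frakc_k)$ is open and its outer boundary (minus the special edges) is closed. The first step is to set up this identity carefully.

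The second step is to compare these sums with the connection probabilities. We write $G_{s'}(n) \coloneqq \sum_{\text{chains of length } n} \e^{\langle s', X\rangle}\phi^0_{p'}[\text{chain}]$, and we sum over $n$ and split according to the total displacement $X=x$. The generating series $\sum_n \lambda^{-n} G_{s'}(n)$ is then essentially $\sum_x \e^{\langle s',x\rangle}\phi^0_{p'}[0\lra x,\ \text{diamond-confined decomposition}]$. The inputs are the Campanino--Ioffe--Velenik facts already quoted in the paper: for $x$ in the $\kappa$-cone, $\phi^0_{p'}[0\lra x]$ coincides with the contribution of irreducible chains up to exponentially negligible boundary pieces, and the tilt kills the directions outside the cone. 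With these, we get that $\sum_x \e^{\langle s',x\rangle}\phi^0_{p'}[0\lra x]$ converges when $s'$ lies in the interior of $\calW_{p'}$ and diverges when $s'$ lies outside it. Here we use the relation $\frakm(p',x)=\sup_{u\in\calW_{p'}}\langle u,x\rangle$ and the fact that $\fraky(p',s)s\in\partial\calW_{p'}$, since $\frakg$ is the gauge of $\calW_{p'}$. Comparing with the renewal-type series then gives $\rho_t(p',s')<1$ for $s'\in\mathrm{int}\,\calW_{p'}$ near $t$ and $\rho_t\ge 1$ outside.

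The third step pins down equality on the boundary. The map $\lambda\mapsto\rho_t(p',\lambda s)$ is continuous: the leading eigenvalue is simple and isolated by Ruelle--Perron--Frobenius, and we apply Theorem~\ref{thm:kato} in the $s$ variable. It is also strictly increasing, because $\langle s,X\rangle>0$ on nontrivial clusters in the cone. Combined with the dichotomy of step two, the unique crossing point of $1$ is therefore $\lambda=\fraky(p',s)$.

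The main obstacle will be step two: controlling the boundary pieces uniformly, and ensuring the dichotomy is sharp, i.e.\ that $\rho=1$ exactly on the boundary rather than merely $\le 1$ and $\ge 1$. A further difficulty there is that the tilt lives in the fixed cone geometry of $t$ while $p'$ varies; we need $p'$ close to $p$ so that $\fraky(p',s)s$ stays in the region where the $t$-cone decomposition is dominant, which uses continuity of the mass in $p$ (proved in the Appendix). We would first try to prove the sharp statement via a renewal argument: the chain weights at $\rho=1$ define a probability renewal with exponential tails, which yields $\phi^0_{p'}[0\lra x]\asymp \e^{-\langle s',x\rangle}$ up to subexponential factors along the direction of the Wulff-dual point.
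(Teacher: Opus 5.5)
Your proposal is correct and follows essentially the paper's argument. You compare convergence of the tilted susceptibility restricted to the $t$-cone with that of $\sum_n[\frakL^n_{\psi^{\alpha s}_{p+z}}\mathds{1}](\emptyset)$ via the Campanino--Ioffe--Velenik decomposition, get the dichotomy ($\rho_t<1$ gives convergence, $\rho_t>1$ gives divergence) from Gelfand's formula and the positive RPF eigenfunction, and close with continuity of $\alpha\mapsto\rho_t(p+z,\alpha s)$ obtained through Kato's theorem. That continuity already forces $\rho_t=1$ at $\alpha=\fraky(p+z,s)$, so the ``sharpness'' obstacle and the renewal argument you anticipate are unnecessary, and the factor $\lambda^{-n}$ in your generating series should simply be $1$.
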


\begin{remark}
Observe that, by definition of $\fraky$, the vector $\fraky(p+z,s)\cdot s$ is always on the boundary of the Wulff shape. 
This identity generalizes a result formally proved by Campanino, Ioffe and Velenik,
which is the content of Section 4.3 of~\cite{campanino_fluctuation_2008} (see also Section~3.3 of~\cite{campanino_ornstein-zernike_2003} for a detailed computation in a similar setup).
The only difference here is that we also perturb the operator in the $p$-direction. 
Given the improved mass-gap estimate given by Theorem~\ref{thm:complex_reg_ruelle}, the proof is extremely similar to theirs.
We give a formal proof below, based on a claim that follows from~\cite{campanino_ornstein-zernike_2003} for which we only give a heuristic explanation.
\end{remark}

\begin{proof}[Proof of Theorem~\ref{thm:corr_length_eigenvalue_Ruelle}]

For every $(z,s) \in \calU_{\delta} \cap\R^{d+1}$ define the function 
\begin{equation}\label{equ:tilted_susceptibility_power_series}
S_{z,s}: \alpha \in \R_{> 0} \mapsto \sum_{x \in \Z^d} \e^{\alpha \langle s, x \rangle} \phi^0_{p+z}[0 \longleftrightarrow x]. 
\end{equation}
By definition of the Wulff shape, it is clear that the series above is convergent (resp. divergent) if $\alpha< \fraky(p+z,s)$ (resp. $\alpha> \fraky(p+z,s)$). 

Recall the definition of the forward $t$-cone of surcharge $\kappa$ with apex $0$ at parameter $p$,
\[
\calY_\kappa^{t,+}(0) = \{ y \in \Z^d, \langle t, y \rangle \geq (1-\kappa)\frakm(p, y)\}.
\]
When $z = 0$ and $s = t$, it is clear from the definition of $\calY_\kappa^{t,+}(0)$ that the vertices $x$ outside of~$\calY_\kappa^{t,+}(0)$ can be neglected when studying the radius of convergence of \eqref{equ:tilted_susceptibility_power_series}. 
We now use the continuity of the mass $\frakm$ (it is the object of Lemma~\ref{lem:mass_continuous}, proved in the appendix), to argue that 
by reducing $\kappa$ if necessary, this remains true for all $(z,s) \in \calU_{\delta} \cap\R^{d+1}$. Thus, for all $(z,s) \in \calU_{\delta} \cap\R^{d+1}$, the convergence of the series $S_{z,s}$ is equivalent to the convergence of the series
\begin{equation}\label{equ:restrict_tilted_susceptibility_power_series}
\hat{S}_{z,s}: \alpha \in \R_{> 0} \mapsto \sum_{x \in \calY_\kappa^{t,+}(0)} \e^{\alpha \langle s, x \rangle} \phi^0_{p+z}[0 \longleftrightarrow x]. 
\end{equation}

We will conclude by using the following claim

\medskip
\noindent{\bf Claim.}  
For every $(z,s) \in \calU_{\delta} \cap\R^{d+1}$ and $\alpha > 0$, the series defining $\hat{S}_{z,s}(\alpha)$ is convergent if and only if the series 
\begin{equation}\label{equ:sum_operators_1}
\sum_{n\geq 1} [\frakL_{\psi_{p+z}^{\alpha s}}]^n \mathds{1}(\emptyset) 
\end{equation}
is convergent, where $\mathds{1}$ denotes the function that is constant equal to 1 (which is clearly an element of $\calF_{\kappa, \theta}^t$).\\

\medskip

A proof of this claim in the case where $(z,s) = (0,t)$ can be found in \cite[Section 3.3]{campanino_ornstein-zernike_2003}. 
The full claim follows by observing that the bounds on $\psi_{p+z}^s$ used in this proof remain valid for all $(z,s) \in \calU_{\delta} \cap\R^{d+1}$. 
We will omit the details of this verification and just claim that this follows from the uniform bounds given in Theorem \ref{thm:complex_reg_ruelle}. For convenience of the reader, a heuristic explanation of the claim is provided in the remark right below the end of the proof. 

It follows from the claim and the previous observations that the series in \eqref{equ:sum_operators_1} is convergent (resp. divergent) if $\alpha< \fraky(p+z,s)$ (resp. $\alpha> \fraky(p+z,s)$). We now remark that
\begin{itemize}
\item If $\rho_{t}(p+z,\alpha s) < 1$, the convergence of \eqref{equ:sum_operators_1} follows from the the upper bound of Gelfand's formula for bounded linear operators (see, \emph{e.g.},~\cite[Section 17.1]{lax}).
\item  Conversely, if $\rho_{t}(p+z,\alpha s) > 1$, the divergence of \eqref{equ:sum_operators_1} follows from the strict positivity of the eigenfunction in the Ruelle-Perron-Frobenius theorem presented in the next section (see Theorem \ref{thm:RPF}) and the fact that for any strictly positive $h$ in the eigenspace associated to the leading eigenvalue, we have
\begin{align*}
\Vert h\Vert_{\infty} [\frakL_{\psi_{p+z}^{\alpha s}}]^n \mathds{1}(\emptyset)  \geq [\frakL_{\psi_{p+z}^{\alpha s}}]^n h(\emptyset) \geq \rho_t(p+z,\alpha s)^n h(\emptyset).
\end{align*}
\end{itemize}
Conditionally on the continuity of the application $\alpha \mapsto \rho_{t}(p+z,\alpha s)$ at the point $\fraky(p+z,s)$ we directly obtain:
\begin{equation}
\rho_{t}(p+z,\fraky(p+z,s)\cdot s) = 1.
\end{equation}
The claimed continuity property is established in the next section. In fact, we will prove that the function $(z,s) \mapsto \rho_t(p+z, s)$ is analytic, and hence continuous, in a neighbourhood of $(0,t)$. Since $\fraky(p,t)=1$, by our choice of $t \in \partial \calW_p$, the continuity of~$\fraky$ (see Lemma \ref{lem:mass_continuous}) implies that, after possibly reducing $\delta$, the vector $\fraky(p+z,s)\cdot s$ is contained in this neighbourhood, for all $(z,s) \in \calU_{\delta}$.
\end{proof}

\begin{remark}[\bf Heuristic explanation of the claim]
Let us now give a heuristic explanation for the claim. We fix $(z,s) \in \calU_{\delta} \cap\R^{d+1}$ and $x \in \calY_\kappa^{t,+}(0)$. For $x$ far from $0$, the clusters contributing to the event $\{0 \longleftrightarrow x\}$ will typically admit a decomposition of the type
\begin{equation}
\calC = \frakc_\mathsf{b} \circ \frakc_1 \circ \frakc_2 \circ \cdots \circ \frakc_n\circ\frakc_\mathsf{f},
\end{equation}
where $\frakc_1,\dots,\frakc_n \in \frakC^t_\kappa$ and $\frakc_\mathsf{b},\frakc_\mathsf{f}$ are backward and forward confined clusters respectively. See Figure~\ref{fig:clusters}, \textsc{bottom}, for an illustration of a typical contribution to the series~\eqref{equ:restrict_tilted_susceptibility_power_series}. The notation $\frakc \circ \frakc'$ means that we glue the two clusters $\frakc$ and $\frakc'$ by identifying $\mathsf{b}(\frakc)$ with $\mathsf{f}(\frakc')$.
By summing over the possible clusters $\calC$ connecting $0$ to $x$, we get the following bound on the terms appearing in \eqref{equ:restrict_tilted_susceptibility_power_series}
\begin{equation}\label{equ:dec_ir}
\e^{\alpha \langle s, x \rangle}\phi_{p+z}[0 \longleftrightarrow x] \approx \sum\limits_{n \geq 1}\sum\limits_{\substack{ \frakc_1,\dots,\frakc_n \in \frakC^t_\kappa \\ 0 \leftrightarrow x}}\exp( \psi_{p+z}^{\alpha s }(\frakc_1) +  \psi_{p+z}^{\alpha s }(\frakc_1, \frakc_2) + \cdots + \psi_{p+z}^{\alpha s }(\frakc_1,\dots,\frakc_n ) ),
\end{equation}
where the last sum runs over the families of $n$ irreducible pieces $\frakc_1,\dots,\frakc_n$ connecting $0$ to~$x$, i.e. $\sum_{k=1}^n X(\frakc_k) = x$.
The sign $\approx$ comes from the fact that we neglect the contributions of $\frakc_\mathsf{b}, \frakc_\mathsf{f}$ as they do not affect the rate of decay in $x$; again we refer to~\cite{campanino_ornstein-zernike_2003} for the complete argument. Also note that we abused notation by writing $\frakc_1,\dots,\frakc_k$ instead of writing $\frakc_1,\dots,\frakc_k,\emptyset,\emptyset,\dots$.

Observing that for all $n \geq 1$
\begin{equation}
[\frakL_{\psi^{\alpha s}_{p+z}}]^n\mathds{1}(\emptyset) = \sum\limits_{\frakc_1,\dots,\frakc_n \in \frakC^t_\kappa}\exp( \psi_{p+z}^{\alpha s }(\frakc_1) +  \psi_{p+z}^{\alpha s }(\frakc_1, \frakc_2) + \cdots + \psi_{p+z}^{\alpha s }(\frakc_1,\dots,\frakc_n ) )
\end{equation}
and summing \eqref{equ:dec_ir} over the $x$ lying in $\calY_\kappa^{t,+}(0)$, we get
\begin{equation}
\hat{S}_{z,s}(\alpha) = \sum_{x \in \calY_\kappa^{t,+}(0)} \e^{\alpha \langle s, x \rangle}\phi_{p+z}[0 \longleftrightarrow x] \approx \sum\limits_{n \geq 1}[\frakL_{\psi^{\alpha s}_{p+z}}]^n \mathds{1}(\emptyset).
\end{equation}
This is exactly the statement of the claim.

\end{remark}

\section{Analyticity of $\rho_t$ and proof of Theorem~\ref{thm:analyticity_leg_m}}
\label{sec:conditional_proof}

We are now ready to provide a proof of Theorem~\ref{thm:analyticity_leg_m} conditionally on Theorem~\ref{thm:complex_reg_ruelle}.
The strategy of proof is the following: the interest of the relation given by~\eqref{equ:corr_length_eigenvalue_Ruelle} is that it identifies the quantity $\fraky(p+z, s)$ as the solution of an equation implying the spectral radius of a bounded operator. Our aim is now to apply the analytic version of the implicit function theorem, that will conclude on the analyticity of $\fraky$ around the point $(p,t)$ fixed at the beginning of the proof.
Thus, it remains to show that the spectral radius~$\rho_t(p+\cdot, \cdot)$ is a jointly analytic function in a neighbourhood of~$(0,t)$ (together with a condition on the non-vanishing of its derivative in its second coordinate). 

To this end, we first use a deep result, once again provided by~\cite{campanino_ornstein-zernike_2003} and called a \emph{Ruelle--Perron--Frobenius theorem}, asserting that when $s$ is a vector of $\R^d$ close to $t$, the spectral radius~$\rho_t(p, s)$ of the unperturbed operator $\frakL_{\psi^{p}_{s}}$ is actually a non-degenerate eigenvalue. That fact established, we may use Kato's theory to provide an analytic extension of this eigenvalue when considering the complex perturbation of the operator constructed in the last section. This shows that the spectral radius is analytic, and we conclude using equation~\eqref{equ:corr_length_eigenvalue_Ruelle}.

\subsection{The Ruelle--Perron--Frobenius theorem for the FK--Ruelle operator}
The following result is proved in~\cite{campanino_ornstein-zernike_2003}:

\begin{theorem}[Ruelle--Perron--Frobenius theorem]
\label{thm:RPF}
For any $(z,s) \in \calU_{\delta}\cap\R^{d+1}$, the spectral radius $\rho_t(p+z,s)$ is a non-degenerate (in the sense of Section~\ref{sec:kato}) eigenvalue of the operator~$\frakL_{\psi_{p+z}^s}$.
Moreover, $\rho_t(p+z,s)$ is dominant, in the sense that
\begin{equation}\label{eq:dominant_RPF_theorem}
\sup_{\lambda\in \Sigma(\frakL_{\psi^s_{p+z}})\setminus\{\rho_t(p+z,s)\} } |\lambda| < \rho_t(p+z,s), 
\end{equation}
and the corresponding eigenspace is spanned by a strictly positive function $h_{z,s} \in \calF^t_{\kappa,\theta}$. 
\end{theorem}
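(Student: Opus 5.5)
The plan is to obtain Theorem~\ref{thm:RPF} from the abstract Ruelle--Perron--Frobenius machinery for Ruelle operators on countable-alphabet shifts with Hölder potentials, following~\cite{campanino_ornstein-zernike_2003}. The only genuinely new ingredient compared to the case $(z,s)=(0,t)$ treated there is uniformity in the real parameters $(z,s)$. That uniformity is supplied by the real specialisation of Theorem~\ref{thm:complex_reg_ruelle}: the uniform Hölder bound \eqref{equ:complex_theta_bound_potential} and the uniform mass-gap bound \eqref{equ:complex_summability_potential}. For real $(z,s)$ the potential $\psi^s_{p+z}$ is real-valued, so $\frakL_{\psi^s_{p+z}}$ is a positive operator on $\calF^t_{\kappa,\theta}$. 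By Lemma~\ref{lem:LY_inequality} it is bounded, and it satisfies a Lasota--Yorke inequality with contraction factor $M\theta$ on the Hölder seminorm. After replacing $\frakL$ by a suitable iterate, or shrinking $\theta$, this factor can be made smaller than the spectral radius.

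\textbf{Step 1: quasi-compactness.} Iterating the Lasota--Yorke inequality gives $|\frakL^n f|_\theta \le C\theta^n \|\frakL^n\|\ldots$, and in Hennion's form this becomes $\|\frakL^n f\|_\theta \le C r^n\|f\|_\theta + C_n \|f\|_{\text{weak}}$. The weak norm here is a seminorm controlling finitely many coordinates. Since the alphabet $\frakC^t_\kappa$ is countable rather than finite, compactness of the weak embedding fails in general. So I would instead follow the route of Sarig and Ionescu-Tulcea--Marinescu as adapted in~\cite{campanino_ornstein-zernike_2003}, and compare with the spectral radius directly. The plan is to show that the essential spectral radius is at most $\theta\rho$, with $\rho=\rho_t(p+z,s)$. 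The needed tightness comes from \eqref{equ:complex_summability_potential}: the tail sums over large clusters $\frakc$ are uniformly small, which yields approximation of $\frakL$ by finite-rank operators up to an error of order $\theta$.

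\textbf{Step 2: positive eigenfunction and eigenmeasure.} Next I would construct a conformal measure $\nu$ with $\frakL^*\nu=\rho\nu$, using Schauder--Tychonoff on probability measures on the (compactified) shift space; tightness again comes from the mass-gap bound. The Hölder bound gives bounded distortion,
\[
\frac{\frakL^n\mathds 1(\bar\frakc)}{\frakL^n\mathds 1(\bar\frakc')}\le \e^{C|\psi|_\theta}.
\]
Hence the Cesàro averages $\rho^{-n}\frakL^n\mathds 1$ lie in a cone of strictly positive, uniformly log-Hölder functions. A limit point $h$ of these averages gives the strictly positive eigenfunction $h_{z,s}$ with eigenvalue $\rho$, and $\rho$ equals the spectral radius.

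\textbf{Step 3: simplicity and dominance.} Conjugate to the normalised operator $\tilde\frakL f=\rho^{-1}h^{-1}\frakL(hf)$, which satisfies $\tilde\frakL\mathds1=\mathds1$. Then apply Birkhoff's Hilbert-metric contraction on the cone $\{f>0:\ f(\bar\frakc)\le \e^{a\theta^{\bfi}}f(\bar\frakc')\}$. This yields exponential convergence $\tilde\frakL^n f\to\int f\,d\nu h$. Such convergence excludes any other spectrum on the circle of radius $\rho$, gives algebraic multiplicity one, and gives the strict inequality \eqref{eq:dominant_RPF_theorem}. Mixing, i.e.\ aperiodicity, holds because the trivial-to-nontrivial transitions are always allowed: any irreducible cluster can follow any other, so the shift is full.

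\textbf{Main obstacle.} I expect the main obstacle to be Step 1 and the Birkhoff contraction in Step 3 on the \emph{countable} alphabet. Both need uniform-in-$\frakc$ control, and the only place it can come from is \eqref{equ:complex_summability_potential} together with the uniform Hölder bound. I would first try to show that one step of $\tilde\frakL$ maps the cone into a subcone of finite diameter, with diameter bounded uniformly in $(z,s)$.
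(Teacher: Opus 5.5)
Your reduction is the paper's proof. The paper argues only that the abstract Ruelle--Perron--Frobenius theorem of \cite{campanino_ornstein-zernike_2003} applies, because its hypotheses are exactly the real restrictions of \eqref{equ:complex_theta_bound_potential} and \eqref{equ:complex_summability_potential} from Theorem~\ref{thm:complex_reg_ruelle}, taken at each fixed real $(z,s)$ on the fixed space $\calF^t_{\kappa,\theta}$. Your first paragraph therefore already suffices. Steps 1--3 re-derive a result that the paper only quotes.

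If you do carry out that re-derivation, one claim in it is wrong: ``the shift is full'', together with ``trivial-to-nontrivial transitions are always allowed''. This is exactly the feature the paper singles out as non-classical.

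\begin{itemize}
\item In $\Omega^t_\kappa$ an $\emptyset$ can only be followed by $\emptyset$, so the Markov shift on the alphabet $\frakC^t_\kappa$ is reducible.
\item Every finite chain is an isolated point: if $\bar\frakc$ has $n$ non-trivial clusters, any $\bar\frakc'$ with $\bfi(\bar\frakc,\bar\frakc')\geq n+2$ equals $\bar\frakc$.
\item This is why \cite{campanino_ornstein-zernike_2003} had to reprove the theorem rather than cite \cite{sarig_thermodynamic_1999,baladi_positive_2000}.
\end{itemize}

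The property your distortion bound and cone contraction actually need is a different one, and it does hold. Since $\frakL_{\psi^s_{p+z}}$ only prepends non-trivial clusters, every $\bar\frakc\in\Omega^t_\kappa$, including the trivial chain, has the same family of contributing preimages $(\frakc,\bar\frakc)$ with $\frakc\in\frakC^t_\kappa\setminus\{\emptyset\}$. The Hölder bound then makes the comparisons $\frakL^n\mathds 1(\bar\frakc)\asymp\frakL^n\mathds 1(\bar\frakc')$, and the finite-diameter estimate for your cone, uniform over \emph{all} pairs of points. Aperiodicity should be justified this way, not by fullness of the shift.

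You must also handle the finite chains explicitly.
\begin{itemize}
\item Since $\frakL_{\psi^s_{p+z}}\mathds 1_{\{\emptyset\}}=0$, induction on length shows the conformal measure charges no chain of finite length.
\item The eigenfunction $h$, by contrast, must be constructed and shown positive on the finite chains as well. Your limits of $\rho^{-n}\frakL^n\mathds 1$ do this, thanks to the uniform distortion above.
\end{itemize}

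With this justification in place, the cone contraction (upgraded to $\|\cdot\|_\theta$ in the usual way) gives isolation, algebraic simplicity and \eqref{eq:dominant_RPF_theorem} directly, so your Step 1 is not needed.
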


Such a result is usually called a \emph{Ruelle--Perron--Frobenius theorem}, and is classical in the literature~\cite{sarig_thermodynamic_1999,baladi_positive_2000}.
However, the specific Theorem~\ref{thm:RPF} had to be reproved in~\cite{campanino_ornstein-zernike_2003}, due to contextual differences with the latter references (the presence of the transient element $\emptyset$ makes the shift reducible, as opposed to the classical settings). 
\begin{remark}
The only place in which the uniformity in $z$ and in $s$ stated in Theorem~\ref{thm:RPF} is used is in the proof of Theorem~\ref{thm:corr_length_eigenvalue_Ruelle}. 
However, the careful reader will note that the principal use of this result is to derive the non-degeneracy of the principal eigenvalue at the point $(p,t)$.  
\end{remark}
\begin{proof}
This theorem was proved on abstract Ruelle operators satisfying a set of assumptions in~\cite{campanino_ornstein-zernike_2003}.
These assumptions are exactly the restriction of the conclusions of Theorem \ref{thm:complex_reg_ruelle} to the real parameters $(z,s)$. 
\end{proof}

\subsection{Proof of Theorem~\ref{thm:analyticity_leg_m}}

We can now conclude using Kato's theory from Section~\ref{sec:kato}.
Recall that we work at a fixed parameter $p<p_c$ and with a fixed $t \in \partial\calW_p$ and that the parameters $\kappa, \theta \in (0,1)$ and  $\delta > 0$ are fixed according to Theorem~\ref{thm:complex_reg_ruelle}. 

\begin{proof}[Proof of Theorem~\ref{thm:analyticity_leg_m}]

Lemma~\ref{lem:LY_inequality} ensures that the family $(\frakL_{\psi_{p+z}^s})_{(z,s) \in\calU_{\delta}}$ is uniformly bounded as a family of linear operators on $\calF^t_{\kappa,\theta}$.

\medskip
\noindent{ \bf Step 1: Holomorphicity of $\frakL_{\psi^s_{p+z}}$.}

The holomorphicity of this family is immediate. 
We use here the notion of ``holomorphicity of type A'' which is the object of Definition 1 in~\cite[S2]{baumgartel_book}.
Indeed, the operators $\frakL_{\psi^s_{p+z}}$ have the same domain.
Moreover, for any $f \in \calF^t_{\kappa,\theta}$, the map
\begin{equation} (z,s) \in  \calU_{\delta} \mapsto \frakL_{\psi_{p+z}^{s}} f \in  \calF^t_{\kappa,\theta} \end{equation} 
is holomorphic, since Theorem~\ref{thm:complex_reg_ruelle} implies that it is the sum of uniformly bounded holomorphic functions.

\medskip
\noindent{ \bf Step 2: Holomorphicity of the leading eigenvalue $\lambda(z,s)$.}

By step 1, the map $(z,s) \in \calU_{\delta} \mapsto \frakL_{\psi^s_{p+z}}$ is holomorphic.
Furthermore, by Theorem~\ref{thm:RPF}, the non-perturbed operator $\psi^{t}_p$ has a non-degenerate and isolated eigenvalue $\lambda(0, t)$ equal to the spectral radius $\rho_t(p,s)$ that is dominant in the sense given by \eqref{eq:dominant_RPF_theorem}.
Thus, we may apply Kato's theorem~\ref{thm:kato} to conclude that there exists a map $(z,s) \in \calU_{\delta} \mapsto \lambda(z,s)$, defined in a possibly smaller polydisk (we adapt the value of $\delta $ accordingly) such that
\begin{itemize}
\item $\lambda$ is holomorphic in $\calU_{\delta}$.
\item For any $(z,s) \in \calU_{\delta}$, $\lambda(z,s) \in \C$ is a non-degenerate, dominant eigenvalue of the operator $\frakL_{\psi_{p+z}^s}$.
\end{itemize}
In particular, for all $(z,s) \in \calU_{\delta} \cap \R^{d+1}$,
\begin{align}\label{equ:identification_rho_lambda}
\lambda(z,s) = \rho_t(p+z,s),
\end{align}
and $(z,s) \mapsto \rho_t(p+z,s)$ is jointly analytic on $\calU_{\delta} \cap \R^{d+1}$. 

Recall that $t$ was fixed on the boundary of the Wulff shape $\calW_p$, which means that $\fraky(p,t) = 1$. 
In order to apply the holomorphic version of the implicit function theorem, we introduce the complex function $\Lambda : \D_{\delta} \times \D_{\delta}(t) \times \D_{\delta}(1) \to \bbC$ defined by
\begin{align*}
\Lambda(z,s,r) = \lambda(z,rs).
\end{align*}
Clearly, the joint holomorphicity of $\lambda$ around $(0,t)$ implies the joint holomorphicity of $\Lambda$ around $(0, t, 1)$.

\medskip
\noindent{\bf Step 3: Non-vanishing of the partial derivative of $\Lambda$ in $r$ at $(0,t,1)$.}

We now justify that 
\begin{equation}
\frac{\partial \Lambda}{\partial r}\Big|_{(0, t , 1)} \neq 0. 
\end{equation}
Recall from Theorem~\ref{thm:RPF} that the operator~$\frakL_{\psi_p^{t}}$ possesses a uniformly positive eigenvector $h \coloneqq h_{0, t} \in\calF^t_{\kappa,\theta}$ associated to the eigenvalue $\lambda(0,t)$. 
Due to the specific form of the Ruelle potential and by the aforementioned positivity property of $h$, there exists a constant $\varsigma>0$ such that for any $\eta > 0$,
\begin{equation}
\frakL_{\psi_p^{(1+\eta)t}} h  \geq e^{\varsigma \eta}  \frakL_{\psi_p^{t}} h = e^{\varsigma \eta } \lambda(0,t) h.
\end{equation}
Since $\Vert\cdot\Vert_\theta \geq \Vert\cdot\Vert_\infty$, it follows that
\begin{align*}
\lim_{k \to \infty} \Big(\Vert (\frakL_{\psi^{(1+\eta)t}_p}^k h \Vert_\theta \Big)^{1/k} \geq e^{\varsigma \eta} \lambda(0,t) .
\end{align*}
By the easy upper bound of Gelfand's formula for bounded linear operators (see for instance~\cite[Section 17.1]{lax}), we get that\footnote{One may also look at~\cite[equation (4.9)]{campanino_ornstein-zernike_2003} for a much stronger inequality.}:
\begin{equation}
\lim_{k \to \infty} \Big(\Vert (\frakL_{\psi^{(1+\eta)t}_p}^k h \Vert_\theta \Big)^{1/k} \leq \lambda(0,(1+\eta)t).
\end{equation}
Combining the two inequalities above, we finally get that $\Lambda(0,t ,1+\eta) \geq e^{\varsigma \eta} \Lambda(0,t,1)$, which in turn implies that 
\begin{equation}
\frac{\partial \Lambda}{\partial r}\Big|_{(0, t, 1)} \geq \varsigma \Lambda(0, t, 1) > 0,
\end{equation}
which is the condition needed to apply the implicit function theorem to $\Lambda$ around $(0,t,1)$.
\medskip
\noindent{\bf Step 4: Conclusion.}
By equation \eqref{equ:corr_length_eigenvalue_Ruelle} from Theorem \ref{thm:corr_length_eigenvalue_Ruelle} and equation \eqref{equ:identification_rho_lambda}, the following relation holds for all $(z,s) \in \calU_{\delta} \cap \R^{d+1}$
\begin{align*}
\Lambda(z,s,\fraky(p+z,s)) = 1.
\end{align*}
Step 2 and 3 allows us to apply the holomorphic version of the implicit function theorem to $\Lambda$ at the point $(0,t, 1)$. 
This directly shows that the function $\fraky$ is jointly analytic around the point $(p,t)$. 
Recall that $t$ was chosen to lie on $\partial \calW_p$. 
However, since $\fraky(p,\cdot)$ is homogeneous of degree $-1$, the result extends to every $(p,t) \in (0,p_c) \times \R \setminus \{0\}$. 
So, the function~$\fraky$ possesses an analytic extension defined on a complex neighbourhood of its domain of definition $(0,p_c) \times \bbR^{d} \setminus \{0\}$. 
This implies that $\fraky$ is jointly analytic on $(0,p_c) \times \R^d \setminus \{0\}$, which completes the proof.
\end{proof}

\section{Regularity of the complexified Ruelle potential via cluster expansion}
\label{sec:exp_bounds}

To conclude, it remains to prove the main result about the regularity of the complex Ruelle potential, namely Theorem~\ref{thm:complex_reg_ruelle}.
To this end, we will use a cluster expansion method, very similar to~\cite{ott_weak_2020-1,dalimonte_analyticity_2026}. We refer to the monograph \cite{book_velenik} for a gentle introduction to cluster expansion.

\subsection{Setup}
Let us state some important convention used in the rest of the paper.

\begin{itemize}
\item We recall that \textbf{the parameter $p < p_c$ is a fixed subcritical parameter}.
As such, the dependency of the quantities in the parameter $p$ is implicit, and we do not keep track of it.

\item A subgraph of $\bbZ^d$ is a graph $G$ with vertex set $V(G)$ included in $V(\Z^d)$ and edge set $E(G)$ included in $E(\Z^d)$. Note that a subgraph of $\bbZ^d$ is \textbf{not necessarily finite}.

\item When $G$ is a \textbf{finite} subgraph of $\Z^d$. 
The rational function $p \mapsto \mathbf{Z}^0_{G,p}$ is well defined and can be naturally extended to an analytic function on the complex domain~$\C \setminus \{1\}$. 
For $z \in \C$, we shall denote by $\mathbf{Z}^0_{G,p+z}$ the value of this extension at the point $p+z$. 
In a similar fashion, we denote by $\phi^0_{G,p+z}$ the analytic extension of the measure $\phi^0_{G,p}$ to the complex plane. 
This extension is well-defined for all $z$ such that $\mathbf{Z}^0_{G,p+z}$ is non zero.
\end{itemize}

\subsection{Graph restriction and cluster expansion}

We start by importing the cluster expansion formalism developed in~\cite{ott_ising,ott_weak_2020-1} and used in~\cite{dalimonte_analyticity_2026} to provide analytic extensions of probabilities of local events. 
In these articles, the cluster expansion method is performed on $d$-dimensional tori. 
In what follows, we will need to be slightly more general, and to perform the cluster expansion on general subgraphs of $\Z^d$. 

\subsubsection{Graph restriction and mixing}

A key ingredient for performing the cluster expansion is the exponential weak-mixing property of subcritical FK-measure.
In order to extend the cluster expansion method to arbitrary subgraphs of $\bbZ^d$, we shall rely on the following proposition that shows that weak mixing of subcritical percolation remains valid after restricting the underlying graph.

\begin{lemma}
\label{lem:unif_total_variation_distance}
Let $a > 1$. There exist two constants $C, c>0$ such that, for any subgraph $G$ of $\bbZ^d$, any $N \geq 0$ and any $x \in \bbZ^d$, if $\xi$ is a boundary condition on $\bbZ^d$ \emph{whose edges outside of~$E(G)$ are closed}, then
\begin{equation}
\mathrm{d}_{\mathsf{TV}} \big( \phi^{\xi}_{G\cap \Lambda_{a N}(x), p} ( \cdot_{|G\cap \Lambda_N(x)} ), \phi^{0}_{G\cap \Lambda_{a N}(x), p}( \cdot_{|G\cap \Lambda_N(x)} ) \big) < C\exp(-cN), 
\end{equation}
where $\mathrm{d}_{\mathsf{TV}}$ denotes the distance in total variation.
\end{lemma}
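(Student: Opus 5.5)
We couple the two measures monotonically and reduce the total variation bound to a disconnection estimate, using subcritical exponential decay of connections. Write $H \coloneqq G\cap\Lambda_{aN}(x)$ and $K\coloneqq G\cap\Lambda_N(x)$. Since $q\geq 1$, the FK measure on the finite graph $H$ satisfies the FKG lattice condition, and $\phi^{\xi}_{H,p}$ stochastically dominates $\phi^{0}_{H,p}$. The same holds for $\phi^1_{H,p}$, the measure on $H$ with all edges of $\Z^d$ outside $E(H)$ open.

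Because the edges of $\xi$ outside $E(G)$ are closed, only the edges of $\xi$ in $E(G)\setminus E(H)$ affect the cluster count inside $H$. In particular, the wired measure relevant to $\xi$ is the one on $H$ where all vertices of $\partial_G H$ are identified. Here $\partial_G H$ denotes the vertices of $H$ adjacent, through an edge of $G$, to a vertex outside $\Lambda_{aN}(x)$. This measure dominates $\phi^\xi_{H,p}$, and it is by construction sandwiched between $\phi^0_{H,p}$ and $\phi^\xi_{H,p}$'s upper bound. So it suffices to bound $\mathrm{d}_{\mathsf{TV}}$ between the $G$-wired measure $\phi^{1_G}_{H,p}$ and $\phi^0_{H,p}$ restricted to $K$, since monotone sandwiching gives $\mathrm{d}_{\mathsf{TV}}(\mu,\nu)\le \mathrm{d}_{\mathsf{TV}}(\mu^{\max},\mu^{\min})$ for restrictions of monotone couplings.

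The main step is an exploration (Markov property) argument. Take an increasing coupling of $\omega^1\sim\phi^{1_G}_{H,p}$ and $\omega^0\sim\phi^0_{H,p}$, obtained by revealing the clusters of $\partial_G H$ in $\omega^1$ from outside inward. Let $\mathcal{A}$ be the event that no open path in $\omega^1$ connects $K$ to $\partial_G H$. On $\mathcal{A}$, the explored region, namely the union of clusters of the boundary together with their closed edge boundary, does not meet $K$. By the domain Markov property, the conditional law of $\omega^1$ on the unexplored part is then the free measure on the remaining subgraph $H'\supset K$. Exploring the same region in $\omega^0$, which is smaller, also yields closed boundary edges, so after this step the two configurations can be coupled to agree on $K$ as free measures on the same graph, up to a standard argument that explores with the larger configuration. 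Hence
\[
\mathrm{d}_{\mathsf{TV}}\le \phi^{1_G}_{H,p}[K\lra \partial_G H]\le \sum_{y\in\partial\Lambda_N(x)}\phi^{1_G}_{H,p}\big[y\lra \partial \Lambda_{aN}(x)\big].
\]

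It remains to bound this connection probability uniformly in $G$, and I expect this to be the main obstacle. Since edges of $\Z^d$ outside $E(G)$ are closed, I will compare with the full lattice. The measure $\phi^{1_G}_{H,p}$ is a measure on the graph $G$ (after wiring), and by the comparison between boundary conditions and monotonicity in the graph for $q\ge1$ it is dominated by $\phi^1_{\Lambda_{aN}(x),p}$ on $\Z^d$: closing edges outside $G$ and wiring on a subset of the boundary is dominated by opening everything outside. The two measures differ only in closures outside $E(G)$, which is an FKG-decreasing conditioning, though it requires care since conditioning on closed edges of a finite graph is exactly deletion.

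So the probability is at most $\phi^1_{\Lambda_{aN}(x),p}[y\lra\partial\Lambda_{aN}(x)]$. Since $y$ lies at distance at least $(a-1)N$ from $\partial\Lambda_{aN}(x)$, I then invoke the sharpness result~\cite{duminil-copin_sharp_2019}, which for $p<p_c$ gives exponential decay of $\phi^1_{\Lambda_n,p}[0\lra\partial\Lambda_n]\le e^{-c' n}$ with wired boundary conditions, not only for $\phi^0$. Summing over the $O(N^{d-1})$ points $y$ gives $C N^{d-1}e^{-c'(a-1)N}\le C e^{-cN}$, which proves the lemma.
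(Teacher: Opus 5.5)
Your argument is correct in substance, but it follows a different route from the paper. The paper does not prove the total-variation bound directly. It deduces it from the ratio weak mixing Lemma~\ref{lem:ratio_mixing_uniform_G}, the multiplicative bound $|\phi^0_{G,p}[A\,|\,B]/\phi^0_{G,p}[A]-1|\le 4\phi^1_{\Z^d\setminus\{S\cup S'\}}[\partial S\lra\partial S']$, combined with exponential decay. That lemma is itself proved by an increasing exploration coupling for one inequality and an FKG decomposition over the cluster of $S'$ for the other. Uniformity in $G$ holds because the right-hand side does not involve $G$: deleting edges only lowers connection probabilities.

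You instead argue directly:
\begin{enumerate}
\item bound $\phi^\xi_{H,p}$ above by the $G$-wired measure;
\item couple by exploration from $\partial_G H$;
\item dominate by $\phi^1_{\Lambda_{aN}(x),p}$, by conditioning on the decreasing event that the edges outside $G$ are closed;
\item finish with the wired finite-volume bound of~\cite{duminil-copin_sharp_2019}.
\end{enumerate}
Your route is more elementary and self-contained. It also avoids representing $\phi^\xi_{H,p}$ as a conditional law of a free measure on a larger graph, a step the paper leaves implicit. The paper's route buys the stronger ratio statement, which it reuses for the mixing inputs behind Remark~\ref{remq:real_mass_gap}.

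One justification must be corrected. The principle that $\mu^{\min}\preceq\nu\preceq\mu^{\max}$ implies $\mathrm{d}_{\mathsf{TV}}(\nu,\mu^{\min})\le\mathrm{d}_{\mathsf{TV}}(\mu^{\max},\mu^{\min})$ is false in general, because stochastic ordering only controls increasing events. A counterexample on the chain $\{0,\dots,n\}$:
\begin{itemize}
\item take $\mu^{\min}$ uniform and $\mu^{\max}=(1-\tfrac1{n+1})\mu^{\min}+\tfrac1{n+1}\delta_n$;
\item take $\nu$ whose distribution function alternates between those of $\mu^{\min}$ and $\mu^{\max}$;
\item then $\mathrm{d}_{\mathsf{TV}}(\mu^{\max},\mu^{\min})<\tfrac1n$, while $\mathrm{d}_{\mathsf{TV}}(\nu,\mu^{\min})$ stays of order one.
\end{itemize}
What is true is that under a \emph{simultaneous} monotone coupling $X^{\min}\le Y\le X^{\max}$ one has $\bbP(Y_{|K}\neq X^{\min}_{|K})\le\bbP(X^{\max}_{|K}\neq X^{\min}_{|K})$. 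This disagreement probability is exactly what your exploration controls.

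The simplest repair is to run your exploration directly on the pair $\omega^\xi\sim\phi^\xi_{H,p}$ and $\omega^0\sim\phi^0_{H,p}$, revealing the clusters of $\partial_G H$ in $\omega^\xi$. On $\{V(K)\nlra\partial_G H\}$, both conditional laws on the unexplored graph are the same free measure, so
\[
\mathrm{d}_{\mathsf{TV}}\le\phi^\xi_{H,p}[V(K)\lra\partial_G H]\le\phi^{1_G}_{H,p}[V(K)\lra\partial_G H].
\]
The last step only uses stochastic domination on an increasing event, which is all the wired measure is needed for. The rest of your argument then goes through unchanged.
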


This lemma follows directly from the stronger ratio weak mixing stated in Lemma \ref{lem:ratio_mixing_uniform_G} and exponential decay of connection probabilities in FK-percolation. 

Note that it is necessary that the edges of $\xi$ outside of $E(G)$ are closed; otherwise, one may have a discrepancy of boundary conditions at distance $0$ from $G\cap \Lambda_N(x)$, which would result in a large total variation distance between the two measures. 
This condition on $\xi$ and the intersection with $G$ taken in the above FK-percolation measure have to be interpreted as replacing the underlying graph $\bbZ^d$ by the subgraph~$G$.
 
The fact that the bound is uniform over the subgraphs $G$ of $\bbZ^d$ actually follows from the fact that removing edges from $\bbZ^d$ by restricting the model to a subgraph $G$ can only decrease the probabilities of connection.

\subsubsection{Compatibility of the cluster expansion with the graph restriction operation}

We now slightly extend the cluster expansion representation of~\cite{ott_weak_2020-1} by showing that the weights are compatible with the graph restriction operation. Fix $L \in \Z_{>0}$. Following the renormalisation scheme performed as a preliminary to cluster expansion in~\cite{ott_weak_2020-1}, we define the~$L$-renormalised lattice~$\bbZ^{d}_L$ as the graph with vertex set $((2L)\bbZ)^d$ and edges between sites at distance $2L$ (for the graph distance in~$\bbZ^d$). Since $\bbZ^{d}_L$ can be naturally embedded in $\bbZ^d$, vertices of $\bbZ^{d}_L$ will sometimes be viewed as vertices of $\bbZ^d$. For a given subgraph $G$ of $\Z^d$, we define the distance between~$v \in \Z^d_L$ and $G$ as the graph distance in~$\Z^d$ between~$v$ and~$G$, viewing $v$ as a vertex of $\Z^d$. The polymer set of $G$ is defined by
\begin{align*}
\sfP^L_{G} \coloneqq \Big\{ \gamma,~ 
\begin{array}{ll}
\gamma \mbox{ is a finite, connected and non empty subgraph of } \bbZ^{d}_L \\
\mbox{and every vertex of } \gamma \mbox{ is at distance at most } L \mbox{ from } G
\end{array}
\Big\}.
\end{align*}
For $\gamma$ in $\sfP^L_{G}$, we denote by $|\gamma|$ the number of vertices of $\gamma$. Finally, we denote by $\mathsf{Span}(\gamma)$ the subgraph of $\bbZ^d$ obtained by taking the union of all the boxes of radius $L$ centered at a vertex of $\gamma$, i.e.,
\begin{align*}
\mathsf{Span}(\gamma) = \bigcup\limits_{v \in \gamma} \Lambda_L(v)
\end{align*}
The following result states that the cluster expansion of~\cite{ott_weak_2020-1} can be performed uniformly over the subgraphs of $\bbZ^d$ with compatible cluster weights $\sfw_{G,z}(\gamma)$.

\begin{proposition}\label{prop:clust_exp_comp}
For all $c > 0$, there exists $L \in \Z_{>0}$ and $\delta > 0$ such that for any $0 <\eps < \delta$, there exists a constant $C_\eps>0$ going to 0 when $\eps$ goes to 0 such that the following is true. 
For any finite subgraph $G$ of $\bbZ^d$, for any $z \in \bbD_\eps$, there exists a weight function $\sfw_{G,z} : \sfP^L_{G} \rightarrow \bbC$ satisfying: 
\begin{enumerate}
\item  {\bf Uniform bound.} For any $\gamma \in \sfP^L_{G}$ and any $z \in \bbD_\eps$,
\begin{equation}\label{equ:bound_weight}
 |\sfw_{G,z}(\gamma)| \leq C_\eps e^{-|\gamma|c}. 
 \end{equation}
\item {\bf Subgraph compatibility.} For any finite subgraph $G'$ of $\bbZ^d$, any polymer $\gamma \in \sfP^L_{G} \cap \sfP^L_{G'}$ such that $\mathsf{Span}(\gamma) \cap G = \mathsf{Span}(\gamma) \cap G'$ and any $z \in \bbD_\eps$, 
\begin{equation} \sfw_{G,z}(\gamma) = \sfw_{G',z}(\gamma). \end{equation}
\item {\bf Convergence of the cluster expansion.} The following formula holds true
\begin{equation}\label{equ:clust_exp_subgraph}
\frac{
\mathbf{Z}^0_{G,p+z}
}
{
\mathbf{Z}^0_{G,p}
}
=
\exp\big( \sum_{n\geq 1} \sum_{\gamma_1, \dots, \gamma_n \in \sfP^L_{G}} \varphi_n(\gamma_1, \dots, \gamma_n) \prod_{k=1}^n \sfw_{G,z}(\gamma_k) \big),
\end{equation} 
where $\varphi_n$ is the Ursell function on ordered collections of polymers. In particular the series in the right-hand side converges. 
\end{enumerate}  
\end{proposition}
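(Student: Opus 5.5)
We will follow the construction of~\cite{ott_weak_2020-1} (as used in~\cite{dalimonte_analyticity_2026}), replacing the torus by the finite subgraph $G$ and tracking which parts of the construction are local. The starting point is the algebraic identity obtained by writing the complex weight as a perturbation of the real one. Setting $a_z(\om) \coloneqq \big(\tfrac{(p+z)(1-p)}{p(1-p-z)}\big)^{o(\om)}$, we have $\mathbf{Z}^0_{G,p+z}/\mathbf{Z}^0_{G,p} = \phi^0_{G,p}\big[\prod_{e\in E(G)} (1+(\epsilon_z)_e)\big]$ with $\epsilon_z(\om_e)$ vanishing at $z=0$ and of modulus $O(|z|)$. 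We then tile $\Z^d$ by the boxes $\Lambda_L(v)$, $v\in\Z^d_L$, group the factors edge-by-edge into block factors $1+g_v$, where $g_v$ depends only on $\om$ restricted to $G\cap\Lambda_L(v)$, and expand the product as $\sum_{A}\phi^0_{G,p}[\prod_{v\in A} g_v]$ over finite sets $A$ of renormalised vertices at distance at most $L$ from $G$ (other blocks contain no edges of $G$ and contribute the factor $1$).

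The second step is to factorise these correlations into connected pieces. Decompose $A$ into $\Z^d_L$-connected components. We then use weak mixing, namely Lemma~\ref{lem:unif_total_variation_distance} (or rather the ratio mixing Lemma~\ref{lem:ratio_mixing_uniform_G}), to define, by a Möbius inversion over partitions, truncated functions $\sfw_{G,z}(\gamma)$ such that $\phi^0_{G,p}[\prod_{v\in A}g_v]$ equals a sum over families of polymers $\gamma\in\sfP^L_G$ (covering $A$, possibly enlarged by ``mixing links'') of products of weights. This puts the ratio of partition functions in the form of an abstract polymer gas with hard-core (compatibility) interaction. The construction of~\cite{ott_weak_2020-1} achieves this, with weights built from the FK measure conditioned in boxes slightly larger than $\mathsf{Span}(\gamma)$.

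The crucial point for item (2) is to define $\sfw_{G,z}(\gamma)$ using only $\phi^0$ restricted to $G\cap\mathsf{Span}(\gamma)$ (free boundary outside). Because edges outside $G$ are closed, this restricted measure depends on $G$ only through $\mathsf{Span}(\gamma)\cap G$, which gives subgraph compatibility. The price is that the true correlations under $\phi^0_{G,p}$ differ from these local quantities. That discrepancy is exactly what the mixing lemma, uniform in $G$, controls, and it is absorbed into larger polymers with exponentially small extra weight.

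Item (1) follows from two estimates. Each block factor satisfies $|g_v|\le C'|z|L^d$. The mixing corrections decay as $\e^{-c'L}$ per block, uniformly in $G$. Choosing first $L$ large, so that $C\e^{-c'L}$ beats the entropy of lattice animals and the target rate $\e^{-c}$, and then $\delta$ small, gives $|\sfw_{G,z}(\gamma)|\le C_\eps\e^{-c|\gamma|}$ with $C_\eps\to0$ as $\eps \to 0$. Item (3) then follows from the Kotecký--Preiss criterion, since the polymer set $\sfP^L_G$ sits inside $\Z^d_L$ with bounded degree, uniformly in $G$.

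The main obstacle is the second step: making the truncated weights simultaneously local (hence $G$-compatible) and exponentially small, uniformly over arbitrary and possibly very irregular subgraphs $G$. We will handle this by checking that every estimate in~\cite{ott_weak_2020-1} uses only mixing bounds of the form given by Lemma~\ref{lem:unif_total_variation_distance}, which hold uniformly in $G$.
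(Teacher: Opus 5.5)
Your outer architecture is the paper's: rerun the construction of \cite{ott_weak_2020-1} on $G$, get (2) from locality of the weights, (1) from $G$-uniform mixing, and (3) from Kotecký--Preiss. The gap is in the second step, which you rightly flag as the crux. As described it does not work, and it is not what \cite{ott_weak_2020-1} does.

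\emph{Why the second step fails.} Item (2) demands \emph{exact} equality of weights. That rules out weights obtained by Möbius inversion of the correlations $\phi^0_{G,p}[\prod_{v\in B}g_v]$, since these depend on all of $G$, not only on $\mathsf{Span}(\gamma)\cap G$. Suppose instead you define $\sfw_{G,z}(\gamma)$ from the local measure $\phi^0_{G\cap\mathsf{Span}(\gamma),p}$. Then (3) is no longer an identity. Moreover, ``absorbing the discrepancy into larger polymers'' is circular: the discrepancy is a non-local quantity, and the weight of the larger polymer must again be local, so the same problem reappears one scale up. Static mixing (Lemma~\ref{lem:unif_total_variation_distance} or Lemma~\ref{lem:ratio_mixing_uniform_G}) only gives approximate decoupling of two sets at a time. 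It does not produce an exact local decomposition. Assembling it over partitions of many blocks would also require bounds with no combinatorial loss, which a two-set estimate does not give. Finally, neighbouring boxes $\Lambda_L(v)$ touch, so static mixing yields no smallness at all between adjacent blocks. In your scheme, the claimed factor $\e^{-c'L}$ per block therefore has no source.

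\emph{The missing idea} is the dynamical representation underlying \cite{ott_weak_2020-1}, as the paper's sketch recalls. The configuration is produced by a Glauber dynamics driven by independent Poisson clocks and uniform variables on the edges. Information percolation and the $\pm1$ expansion then rewrite the ratio of partition functions as a hard-core polymer gas. The weight $\sfw_{G,z}(\gamma)$ depends only on the dynamics run on $\mathsf{Span}(\gamma)\cap G$.
\begin{itemize}
\item Exactness of the factorisation over compatible polymers rests on independence of the driving noise on disjoint regions, not on mixing of $\phi^0_{G,p}$.
\item Item (2) follows by giving the common edges of $G$ and $G'$ the same clocks and uniforms.
\item Mixing enters only in the bound (1), through two inputs. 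First, (i): exponential \emph{temporal} mixing of the dynamics on $G\cap\Lambda_N(x)$, uniformly in $G$ and $x$. The paper deduces this from Lemma~\ref{lem:unif_total_variation_distance} via the spatial-to-temporal argument of \cite[Theorem 16]{harel_spinka}. Second, (ii): exponential smallness of the event that $G\cap\Lambda_N(x)$ contains a cluster of diameter larger than $N/100$, under boundary conditions wired on the edges of $G$. This follows from monotonicity and exponential decay of connections.
\end{itemize}
The per-block smallness in that construction comes from these dynamical estimates. So the final verification you defer to should be of (i) and (ii). Item (i) needs the extra Harel--Spinka step; checking that the static lemma holds uniformly in $G$ is not enough by itself.
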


\begin{proof}[Sketch of proof]

This follows by a close inspection of the construction of~\cite{ott_weak_2020-1} (see also~\cite{ott_ising}). The construction of~\cite{ott_weak_2020-1} shows that the above result holds true when the graphs $G$ and $G'$ are chosen to be $d$-dimensional tori instead of general subgraphs of $\bbZ^d$. So, we shall just explain how the constructions and bounds of~\cite{ott_weak_2020-1} can be extended uniformly to the class of general subgraphs of $\bbZ^d$. 

Let us fix a renormalisation scale $L>0$. The information percolation and the $\pm 1$~expansion described in~\cite{ott_weak_2020-1} readily adapts to the case of subgraphs $G$ by replacing subboxes of the torus by their intersections with $G$ and replacing the polymer set of the torus by the polymer set $\sfP^L_{G}$ of $G$. It follows that the ratio of partition functions at $p+z$ and $p$ can be rewritten in the form of a polymer model as follows
\begin{align}\label{equ:ratio_as_polymer_model}
\frac{
\mathbf{Z}^0_{G,p+z}
}
{
\mathbf{Z}^0_{G,p}
} = \sum\limits_{\bar \gamma \subset \sfP^L_{G}} \prod\limits_{\gamma,\gamma' \in \bar \gamma} \delta(\gamma,\gamma') \prod\limits_{\gamma \in \bar \gamma}  \sfw_{G,z}(\gamma),
\end{align}
where $\delta(\gamma,\gamma')$ is $0$ when $\gamma \cup \gamma'$ is connected and $1$ otherwise. In the above, the weight $\sfw_{G,z}(\gamma)$ of a polymer $\gamma$ only depends on an explicit Glauber dynamic performed on the graph $\mathsf{Span}(\gamma) \cap G$. The {\bf Subgraph compatibility} \emph{(2)} follows from this observation simply by choosing a coupling of the Glauber dynamics on $G$ and $G'$ where each edge of $\bbZ^d$ that is present in both $G$ and $G'$ is attributed the same Poisson point process of resampling times and the same family of uniform variables in the two Glauber dynamics.

Inspecting the construction of~\cite{ott_weak_2020-1} reveals that the {\bf Uniform bound} \emph{(1)} obtained in the case of the torus relies solely on the following two bounds:
\begin{itemize}
\item[(i)] \textbf{Exponential mixing of the Glauber dynamic.}
There exists $c > 0$, $\alpha < \infty$ and $N_0 > 0$ such that for any $N \geq N_0$ and for any edge $e$ incident to the origin
\begin{align}
\bbP(\sigma^{0,0}_{\alpha N,\Lambda_N}(e) \neq \sigma^{1,1}_{\alpha N, \Lambda_N}(e)) \leq \exp(-cN),
\end{align}
where $\sigma^{\omega,\bar\eta}_{t,\Lambda}$ stands for the Glauber dynamic performed in the graph $\Lambda$ with boundary conditions $\omega,\bar\eta$ evaluated at time $t$, as defined in~\cite{ott_weak_2020-1}.
\item[(ii)] \textbf{Exponential decay of connections.}
For any $a>1$, there exists $c>0$ and $N_0 > 0$ such that for any $N \geq N_0$
\begin{align}
\phi^1_{\Lambda_{aN},p}(A_N) \leq \exp(-cN),
\end{align}
where $A_N$ is the event that $\Lambda_N$ contains a cluster of diameter larger than $N/100$.
\end{itemize}

For a given subgraph $G$ of $\bbZ^d$ and a vertex $x$ of $\bbZ^d$, these two bounds remain valid upon replacing every occurrence of $\Lambda_N$ by $G \cap \Lambda_N(x)$, and replacing the boundary conditions $1$ by the boundary conditions that equals $1$ on the edges of $G$ and $0$ elsewhere\footnote{Equivalently, this amount to replacing the underlying torus by the graph $G$}. Moreover, the resulting bounds are uniform over all choices of $G$ and $x$, meaning that the constants $c$ and $N_0$ can be chosen independently of $G$ and $x$.
Indeed, the uniformity of bound (i) is a consequence of the estimate given by Lemma~\ref{lem:unif_total_variation_distance}, which allows to perform the argument\footnote{In our setup, the assumption of quasi-transitivity in \cite{harel_spinka} is replaced by the uniformity in $x$ of the mixing property.} of \cite[Theorem 16]{harel_spinka} uniformly over the subgraphs $G$ of $\bbZ^d$. The uniformity of bound~(ii) is a direct consequence of the monotonic properties of the model and exponential decay of the two-point function. 

This implies that, if $L$ is chosen large enough, then the {\bf uniform bound}~\emph{(1)} obtained for tori remains valid on any subgraph $G$ of $\bbZ^d$. The {\bf convergence of the cluster expansion}~\emph{(3)} for the polymer model \eqref{equ:ratio_as_polymer_model} follows from the {\bf uniform bound}~\emph{(1)} and a classical condition for convergence of cluster expansion~\cite[Theorem 5.4]{book_velenik} (the same condition is used in ~\cite{ott_weak_2020-1}).

\end{proof}


\subsection{Uniform bounds on the logarithmic derivative of probabilities of unique decoupling events}

We next introduce a specific class of percolation events for which we will be able to prove strong analyticity properties of the corresponding probabilities.

\begin{definition}[Unique decoupling event]
\label{def:UDE}
Let $G$ be a subgraph of $\Z^d$. 
A percolation event $A$ is called a \emph{unique decoupling event} (UD-event) for $G$ if the following properties hold:
\begin{itemize}
\item {\bf Locality.} $A$ is measurable with respect to the state of finitely many edges of $G$. 
The support of $A$ shall be called $\mathsf{Supp}_A \subset G$.
\item {\bf Uniqueness.} There exists a unique configuration $\om_A$ on the edges of $\mathsf{Supp}_A$ realising the event $A$. This means that a configuration $\omega$ on $G$ realises the event $A$ if and only if $\om$ coincide with $\omega_A$ on $\mathsf{Supp}_A$. 
\item {\bf Decoupling property.} There exists a number $n_A \in \Z_{\geq 0}$ such that for any finite subgraph $H$ of $G$ containing all the edges in $\mathsf{Supp}_A$, for any $\om$ percolation configuration on $H$ realising $A$, then 
\begin{equation}
	k|_{H}(\om) = k|_{H\setminus \mathsf{Supp}_A}(\om) + n_A. 
\end{equation}
where for any subgraph $H'$ of $H$, $k|_{H'}(\om)$ denotes the number of components of $\om$ when restrained to the subgraph $H'$.
\end{itemize}
Note that we abused notation by writing $\mathsf{Supp}_A \subset G$ instead of $\mathsf{Supp}_A \subset E(G)$ and by writing $H \setminus \mathsf{Supp}_A$ to denote the graph with vertex set $V(H)$ and edge set $E(H) \setminus \mathsf{Supp}_A$. We shall always do so in the rest of the paper to simplify notation.

\end{definition}

\begin{remark}\label{rem:ins_out_UDE}
If $A$ is the event that a prescribed finite pattern $\om_A$ occurs, and if every connected component of the pattern $\om_A$ has at most one intersection point with each connected component of $G \setminus \mathsf{Supp}_A$, then $A$ is a UD-event.
Observe that the events we have just described are exactly the events $A$ corresponding to the realisation of a finite pattern that imposes free boundary conditions on the graph induced by the complementary of $\mathsf{Supp}_A$. 
\end{remark}

\begin{remark}\label{rq:rewriting_psi_log_UDE}
Fix some $s\in \R^d$.
One interesting consequence about the FK-Ruelle potential $\psi_p^s$ defined in Section \ref{sec:perturbed_ruelle} is the following: the spatial Markov property of the FK-percolation measure allows to write that for any $\bar\frakc \in \Omega^t_\kappa$, 
 \begin{equation}\label{equ:markov_for_potential}
 \psi^s_p(\bar\frakc) = \log \phi^0_{\bar\frakc_1,p}[\mathsf{Ins}(\bar\frakc_1)] + \log \phi^0_{G_{\bar\frakc},p}[\mathsf{Out}(\bar\frakc_1)] + \langle s, X(\bar\frakc_1) \rangle , 
 \end{equation}
 where $G_{\bar\frakc}$ denotes the subgraph of $\Z^d$ obtained by removing all the edges that are in the support of the event $\bigcap_{k=2}^{\infty} \mathsf{Out}(X_k + \bar\frakc_k)$.
The rewriting provided by~\eqref{equ:markov_for_potential} has the advantage of expressing the FK-Ruelle potential as the sum of the logarithm of two UD-events for the base graph of the measures, plus the harmless term $\langle s, X(\bar\frakc_1) \rangle$. 
 Indeed, 
 \begin{itemize}
\item $\mathsf{Ins}(\bar\frakc_1)$ is trivially a UD-event for the graph $\bar\frakc_1$ (the decoupling property follows directly from the fact that its support is $\bar\frakc_1$).
\item $\mathsf{Out}(\bar\frakc_1)$ is a UD-event for $G_{\bar\frakc}$ because it is the event that all the edges in a specific finite set are closed. 
 \end{itemize} 
 Representation~\eqref{equ:markov_for_potential} combined with this observation will be particularly useful in regard of Theorem~\ref{thm:clust_exp_UDE}, that is a convergent cluster expansion for logarithms of probabilities of UD-events. 
\end{remark}

The interest of considering UD-events lies in the following elementary property. 

\begin{lemma}\label{lem:dec_UDE}
Fix a subgraph $G$ of $\bbZ^d$. Take a \emph{finite} subgraph $H$ of $G$, and a UD-event~$A$ for $G$ such that $\mathsf{Supp}_A \subset H$. 
Then, provided that $\mathbf{Z}^0_{H,p+z} \neq 0$,
\begin{equation}
\frac{ \phi^0_{H,p+z}[A] }{\phi^0_{H,p}[A] } = \alpha_z^{o(\om_A)}\cdot \frac{\mathbf{Z}^0_{H,p}}{\mathbf{Z}^0_{H,p+z}} \cdot \frac{\mathbf{Z}^0_{H\setminus\mathsf{Supp}_A,p+z}}{\mathbf{Z}^0_{H\setminus\mathsf{Supp}_A,p}},  
\end{equation}
where $\alpha_z = (1 + \frac z p)(1 - \frac{z}{1-p})^{-1}$. 
\end{lemma}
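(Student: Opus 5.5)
The proof is a direct computation from the definition of the FK-percolation measure on the finite graph $H$. Here $H$ has free boundary conditions, so $k^0_H$ is the number of clusters of $H$. We write the measure through its unnormalised weight. The weight of $\om$ at parameter $p$ is $\big(\tfrac{p}{1-p}\big)^{o(\om)}q^{k(\om)}$. Its analytic continuation at $p+z$ is $\big(\tfrac{p+z}{1-p-z}\big)^{o(\om)}q^{k(\om)}$, and the continued measure is this weight divided by $\mathbf{Z}^0_{H,p+z}$, which is nonzero by assumption. The first step is to record the elementary identity
\[
\frac{p+z}{1-p-z}=\frac{p}{1-p}\cdot\frac{1+z/p}{1-z/(1-p)}=\frac{p}{1-p}\,\alpha_z .
\]

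Next we compute the numerator $\mathbf{Z}^0_{H,p+z}\,\phi^0_{H,p+z}[A]$. By the \textbf{Uniqueness} property, $\om$ realises $A$ if and only if $\om|_{\mathsf{Supp}_A}=\om_A$. So the configurations in $A$ are in bijection with the configurations $\om'$ on $H\setminus\mathsf{Supp}_A$, and $o(\om)=o(\om_A)+o(\om')$. By the \textbf{Decoupling property}, which applies because $H$ is a finite subgraph of $G$ containing $\mathsf{Supp}_A$, we have $k|_H(\om)=k|_{H\setminus\mathsf{Supp}_A}(\om')+n_A$. The number of components on $H\setminus\mathsf{Supp}_A$ is computed on the graph with vertex set $V(H)$, so it is exactly the free-boundary cluster count there. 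Therefore
\[
\mathbf{Z}^0_{H,p+z}\,\phi^0_{H,p+z}[A]=\Big(\tfrac{p}{1-p}\alpha_z\Big)^{o(\om_A)}q^{n_A}\,\mathbf{Z}^0_{H\setminus\mathsf{Supp}_A,p+z}.
\]
The same identity holds at $z=0$ with $\alpha_0=1$. This is first a polynomial identity in the edge weight, and hence it holds for the analytic extensions.

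Finally, we divide the identity at $z$ by the identity at $0$. The factors $q^{n_A}$ and $(p/(1-p))^{o(\om_A)}$ cancel, which gives exactly the claimed formula. The quantities $\mathbf{Z}^0_{H,p}$, $\mathbf{Z}^0_{H\setminus\mathsf{Supp}_A,p}$ and $\phi^0_{H,p}[A]$ are nonzero by positivity at real $p\in(0,1)$. This uses finite energy together with the fact that $\om_A$ is a valid pattern, since $A$ is nonempty by uniqueness.

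The only point requiring care is the cluster count. Components must be counted on the graph with the full vertex set $V(H)$, so that vertices isolated after removing $\mathsf{Supp}_A$ are treated consistently with the definition of $\mathbf{Z}^0_{H\setminus\mathsf{Supp}_A}$. This is exactly the convention fixed in Definition~\ref{def:UDE}. Apart from this, the argument is bookkeeping.
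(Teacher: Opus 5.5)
Your proposal is correct and is essentially the paper's proof: the paper also uses uniqueness and the decoupling property to factor $\mathbf{Z}^0_{H,p+z}\,\phi^0_{H,p+z}[A] = q^{n_A}\big(\tfrac{p+z}{1-(p+z)}\big)^{o(\om_A)}\mathbf{Z}^0_{H\setminus\mathsf{Supp}_A,p+z}$, and then divides by the same identity at $z=0$. Your additional remarks — the rewriting $\tfrac{p+z}{1-p-z}=\tfrac{p}{1-p}\,\alpha_z$, the vertex-set convention for counting clusters, and the non-vanishing of the real-parameter quantities — only make explicit what the paper leaves implicit.
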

\begin{proof}
This is a straightforward computation, as the second and third properties in the definition of UD-events allow to split partition functions as follows: 
\begin{equation} \mathbf{Z}^0_{H,p+z} \cdot \phi^0_{H, p+z}[A] = q^{n_A} ( \tfrac{p+z}{1-(p+z)})^{o(\om_A)} \cdot \mathbf{Z}^0_{H\setminus\mathsf{Supp}_A,p+z}. \end{equation}
Applying this with $z=0$ and dividing both identities yields the announced result. 
\end{proof}

The interest of this specific class of events is that one is able to perform a cluster expansion of their complex probabilities, \emph{which radius of convergence is uniform in the size of their support}\footnote{At first sight, this might seem to be covered by the results of~\cite{dalimonte_analyticity_2026}. However, these results are not of the same nature. The cited paper performs a ``resummation trick'' to see that one may construct analytic extensions of the probability of any local event with a uniform radius of convergence, but it does not express them in terms of convergent cluster expansions. As such, the following proposition yields a stronger result, but on a more restricted class of events that the one considered in~\cite{dalimonte_analyticity_2026}.}. 
This is the content of the next result. Note that the counterexample of Appendix \ref{sec:counter_example} shows that the UD-event assumption cannot be removed. Recall that $\D_{\delta}$ denotes the open disc in $\bbC$ of radius $\delta$ centred at $0$.

\begin{theorem}\label{thm:clust_exp_UDE}
There exists $\delta > 0$ such that the following is true. 
\begin{enumerate}
\item {\bf Analytic logarithm of UD-event probabilities.} For any subgraph $G$ of $\Z^d$ and any \emph{UD-event} $A$ for $G$, the following limit exists for all $z \in \D_{\delta}$,
\begin{equation}
\log \phi^0_{G, p+z}[A] \coloneqq \lim\limits_{N \to \infty} \log \phi^0_{G\cap\Lambda_N,p+z}[A]
\end{equation}
and the function
\begin{equation} z \in \D_{\delta} \mapsto \log \phi^0_{G,p+z}[A] \end{equation}
is an analytic extension of the real function $u \in (-\delta, \delta) \mapsto \log \phi^0_{G,p+u}[A]$.
\item {\bf Double-sided exponential growth bound on UD-event probabilities.} For any $\eps < \delta$, there exists a constant $c_\eps > 0$ going to 0 as $\eps$ tends to 0 such that for any subgraph $G$, for any \emph{UD-event} $A$ for $G$,  the complex extensions defined above satisfy the following growth bound: for any $|z| < \eps$, 
\begin{equation}\label{equ:double_sided_bound}
\big\vert \log \phi^0_{G,p+z}[A] - \log \phi^0_{G,p}[A] \big\vert \leq c_\eps |\mathsf{Supp}_A|. 
\end{equation}
\end{enumerate}
\end{theorem}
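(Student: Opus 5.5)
The proof will combine Lemma~\ref{lem:dec_UDE} with the cluster expansion of Proposition~\ref{prop:clust_exp_comp}, applied on both $H = G\cap\Lambda_N$ and $H' = (G\cap\Lambda_N)\setminus \mathsf{Supp}_A$. Fix $c>0$ large, and let $L$, $\delta$, $C_\eps$ be given by Proposition~\ref{prop:clust_exp_comp}.

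For $N$ large enough that $\mathsf{Supp}_A\subset \Lambda_N$, Lemma~\ref{lem:dec_UDE} gives, for $z\in\D_\delta$,
\begin{equation*}
\log\phi^0_{H,p+z}[A] = \log\phi^0_{H,p}[A] + o(\om_A)\log\alpha_z - F_H(z) + F_{H'}(z).
\end{equation*}
Here $F_K(z)$ denotes the convergent cluster-expansion series in~\eqref{equ:clust_exp_subgraph}, and the right-hand side defines the logarithm. The term $\log\phi^0_{H,p}[A]$ is real-defined and analytic in real $p$, and $\log\alpha_z$ is analytic on a small disc with $|\log\alpha_z|\le C|z|$. Each $F_K$ is analytic, being a locally uniformly convergent series of analytic weights, since the Glauber-dynamics construction yields weights that are analytic in $z$. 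It agrees with $\log(\mathbf{Z}^0_{K,p+z}/\mathbf{Z}^0_{K,p})$ for real $z$, so this is an analytic extension. We already have $o(\om_A)\le|\mathsf{Supp}_A|$.

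The key step is to control $F_{H'}(z)-F_H(z)$. Write both as sums over clusters $X=(\gamma_1,\dots,\gamma_n)$ of $\varphi_n(X)\prod_k\sfw(\gamma_k)$. By the subgraph compatibility (2), any cluster all of whose polymers $\gamma$ satisfy $\mathsf{Span}(\gamma)\cap\mathsf{Supp}_A=\emptyset$ lies in both $\sfP^L_H$ and $\sfP^L_{H'}$ with identical weights, so it cancels. A subtlety is that $\mathsf{Span}(\gamma)\cap H$ and $\mathsf{Span}(\gamma)\cap H'$ must coincide as graphs; they differ only by edges of $\mathsf{Supp}_A$, so this holds. The surviving clusters are those whose support meets the $L$-neighbourhood of $\mathsf{Supp}_A$, which contains at most $C_L|\mathsf{Supp}_A|$ renormalized sites. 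The standard cluster-expansion tail bound from the Kotecký--Preiss criterion~\cite[Theorem 5.4]{book_velenik}, with weights bounded by $C_\eps e^{-c|\gamma|}$, gives
\begin{equation*}
\sum_{X\ni v}|\varphi(X)|\prod|\sfw(\gamma)| \le C'_\eps
\end{equation*}
for every renormalized site $v$, with $C'_\eps\to0$ as $\eps\to0$. Summing over $v$ yields $|F_{H'}(z)-F_H(z)|\le 2C_LC'_\eps|\mathsf{Supp}_A|$. This bound is uniform in $N$ and in $G$.

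To prove the existence of the limit as $N\to\infty$, I would note that the same decomposition shows that $F_{H'}-F_H$ is an absolutely convergent sum of cluster terms localized near $\mathsf{Supp}_A$. As $N$ grows, each individual term stabilizes once $\Lambda_N$ contains $\mathsf{Span}$ of all its polymers, by compatibility (2). Dominated convergence then gives uniform convergence on $\D_\eps$, so the limit is analytic. For real $z$, the limit equals $\log\phi^0_{G,p+z}[A]$ by weak convergence of $\phi^0_{G\cap\Lambda_N}$, which gives item (1). Item (2) follows with $c_\eps = C\eps + 2C_LC'_\eps$.

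The main obstacle is the cancellation step. One must verify that the cluster-expansion weights are genuinely local, which is compatibility (2), and that polymers near the boundary of $\Lambda_N$ do not interfere with the cancellation. They do not, because these boundary clusters are common to $H$ and $H'$ away from $\mathsf{Supp}_A$.
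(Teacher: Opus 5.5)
Your proof is correct. Its core is the paper's argument:
\begin{itemize}
\item Lemma~\ref{lem:dec_UDE} on $G_N=G\cap\Lambda_N$;
\item the cluster expansion of Proposition~\ref{prop:clust_exp_comp} for $G_N$ and $G_N\setminus\mathsf{Supp}_A$, which have the same polymer set;
\item cancellation, by subgraph compatibility, of every cluster with $\mathsf{Span}(\cup\gamma_i)\cap\mathsf{Supp}_A=\emptyset$;
\item an anchored cluster-expansion bound. The paper cites \cite[Corollary A.2]{dalimonte_analyticity_2026}, which is your Koteck\'y--Preiss tail estimate summed over the $O(|\mathsf{Supp}_A|)$ renormalized sites near $\mathsf{Supp}_A$.
\end{itemize}

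You differ from the paper in the passage $N\to\infty$.
\begin{itemize}
\item The paper does not prove convergence of the complex quantities directly. It takes $\Psi_{N,A}$, the logarithm of the non-vanishing rational function $\phi^0_{G_N,p+z}[A]/\phi^0_{G_N,p}[A]$. It then combines the $N$-uniform bound $|\Psi_{N,A}|\le c_\eps|\mathsf{Supp}_A|+o(\om_A)\sup|\log\alpha_z|$ with convergence on the real segment, and concludes by Vitali's theorem.
\item You prove convergence of the surviving cluster series itself. Each cluster eventually lies in $\sfP^L_{G_N}$ and has stabilized weights once $\Lambda_N$ contains the spans of its polymers. The $z$-independent summable majorant $2|\varphi_n|\prod_k C_\eps e^{-c|\gamma_k|}$ then gives uniform convergence.
\end{itemize}
Your route needs a little more bookkeeping, but it yields an explicit convergent cluster representation of $\log\phi^0_{G,p+z}[A]$ in infinite volume. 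This is the kind of representation that the proof of Proposition~\ref{prop:complex_mixing_UDE} manipulates. You need real convergence only to identify the limit on $(-\delta,\delta)$. The Vitali route is softer and needs only the uniform bound.

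Both versions use an input that Proposition~\ref{prop:clust_exp_comp} does not literally state: the weights are analytic, or at least continuous, in $z$, with $\sfw_{G,0}\equiv0$. The vanishing at $z=0$ does follow from the uniform bound, since $C_\eps\to0$. This input is what identifies the chosen branch of the logarithm with $o(\om_A)\log\alpha_z+F$. You state it explicitly; the paper uses it implicitly when bounding $\Psi_{N,A}$.
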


\begin{proof}[Proof of Theorem~\ref{thm:clust_exp_UDE}]
We start with the first item. 
Fix a large value of $c > 0$ and take the constants $L \in \Z_{>0}$ and $\delta > 0$ provided by Proposition \ref{prop:clust_exp_comp}. Fix $G$ a subgraph of $\Z^d$, and $A$ a UD-event for $G$. 

Fix $N \in \Z_{>0}$ so large that $\mathsf{Supp}_A \subset G \cap \Lambda_N$. To shorten notation we set $G_N \coloneqq G \cap \Lambda_N$. 
Note that Proposition~\ref{prop:clust_exp_comp} implies in particular that $\mathbf{Z}^0_{G_N,p+z}$ is non zero so $\phi^0_{G_N,p+z}$ is well defined and we may apply Lemma~\ref{lem:dec_UDE} to write
\begin{equation}\label{equ:expansion_ratio}
\frac{\phi^0_{G_N,p+z}[A]}{\phi^0_{G_N,p}[A] } 
=
\alpha_z^{o(\om_A)}\cdot \frac{\mathbf{Z}^0_{G_N,p}}{\mathbf{Z}^0_{G_N,p+z}} \cdot \frac{\mathbf{Z}^0_{G_N\setminus\mathsf{Supp}_A,p+z}}{\mathbf{Z}^0_{G_N\setminus\mathsf{Supp}_A,p}}.
\end{equation} 
The term $\alpha_z^{o(\om_A)}$ is harmless for defining the logarithm.
Indeed, it does not vanish in any neighbourhood on which $\alpha_z \neq 0$, which can be chosen independently of both $A$, $G$ and $N$. 
Furthermore, the complex determination of the logarithm of this quantity that is real-valued on $\R_{>0}$ is bounded by a quantity that depends on $A$ but not on $N$.

We will now apply our cluster expansion result Proposition~\ref{prop:clust_exp_comp} to the other two terms. 
Indeed, by the choice of $\delta$ and by Proposition~\ref{prop:clust_exp_comp}, we have
\begin{equation}\label{equ:introduction_F}
\frac{\mathbf{Z}^0_{G_N,p}}{\mathbf{Z}^0_{G_N,p+z}} \cdot \frac{\mathbf{Z}^0_{G_N\setminus\mathsf{Supp}_A,p+z}}{\mathbf{Z}^0_{G_N\setminus\mathsf{Supp}_A,p}} = \exp(F_{G_N, A}(z) ),  \end{equation}
where the function $F_{H, A}$ is defined for any $H$ finite subgraph of $\Z^d$ and any UD-event $A$ such that $\mathsf{Supp}_A \subset H$ as:
\begin{multline*}
F_{H, A}(z) \coloneqq 
\sum_{n\geq 1}
\Big\{
\sum_{\gamma_1, \dots, \gamma_n \in \sfP_{H\setminus\mathsf{Supp}_A}^{L} }
 \varphi_n(\gamma_1, \dots, \gamma_n)
 \prod_{k=1}^n 
 \sfw_{H\setminus \mathsf{Supp}_A,z}(\gamma_k) 
  \\
  -
 \sum_{\gamma_1, \dots, \gamma_n \in \sfP_{H}^L } 
 \varphi_n(\gamma_1, \dots, \gamma_n)\prod_{k=1}^n \sfw_{H, z}(\gamma_k) \Big\}.
\end{multline*}

To conclude on the first item of the statement, it remains to provide an upper bound on $F_{G_N, A}(z)$ which is independent of $N$. 
Observe that, since  $H\setminus \mathsf{Supp}_A$ was defined to have the same vertex set as $H$, the two polymer sets $\sfP_{H\setminus\mathsf{Supp}_A}^{L}$ and $\sfP_{H}^{L}$ are actually equal. Moreover, by the subgraph compatibility property of the weights $\sfw_{G,z}$ given by Proposition~\ref{prop:clust_exp_comp}, it is the case that all the terms $\gamma_1, \dots, \gamma_n$ satisfying the condition 
\begin{align*}
\mathsf{Span} \big( \bigcup_{i=1}^n \gamma_i \big) \cap \mathsf{Supp}_A = \emptyset
\end{align*}
are counted with the same weight in the two sums. Hence, after cancellations, we obtain: 
 
 \begin{multline} \label{equ:simplified_F}
 F_{G_N, A}(z) = \sum_{n \geq 1}
  \sum_{\substack{\gamma_1, \dots, \gamma_n \in \sfP_{G_N}^L  \\ \mathsf{Span}(\cup \gamma_i)\cap \mathsf{Supp}_A \neq \emptyset} }
  \varphi_n(\gamma_1, \dots, \gamma_n) \\
  \Big\{
  \prod_{k=1}^n \sfw_{G_N \setminus \mathsf{Supp}_A, z}(\gamma_k)
  -
  \prod_{k=1}^n \sfw_{G_N, z}(\gamma_k)
  \Big\}.
 \end{multline}
 Thus, 
 \begin{multline}
 |F_{G_N, A}(z)| 
 \leq 
 \sum_{n \geq 1}
  \sum_{\substack{\gamma_1, \dots, \gamma_n \in \sfP_{G_N}^L  \\ \mathsf{Span}(\cup \gamma_i)\cap \mathsf{Supp}_A \neq \emptyset} }
  \vert \varphi_n(\gamma_1, \dots, \gamma_n) \vert \\
  \Big\{
  \prod_{k=1}^n |\sfw_{G_N \setminus \mathsf{Supp}_A, z}(\gamma_k)|
  +
  \prod_{k=1}^n |\sfw_{G_N, z}(\gamma_k)|
  \Big\}.
 \end{multline}
If the constant $c$ chosen at the beginning of the proof is large enough, then the uniform bound on the weight functions given by Proposition \ref{prop:clust_exp_comp} allows to apply~\cite[Corollary A.2]{dalimonte_analyticity_2026}, which yields the existence of a constant $c_\eps$ such that for any $|z|<\eps< \delta$, 
 \begin{equation}\label{equ:bound_F}
  |F_{G_N, A}(z)|  \leq c_\eps |\mathsf{Supp}_A|. 
 \end{equation}
 
 We are now able to prove the first items of the theorem. 
 Indeed, observe that we now know thanks to~\eqref{equ:expansion_ratio} together with~\eqref{equ:introduction_F} that the ratio $\frac{\phi_{G_N,p+z}[A]}{\phi_{G_N,p}[A]}$ does not vanish in a disk centered at 0. 
 Consider the representation of the logarithm of this ratio that is real-valued on $\R_{>0}$, and call it $\Psi_{N,A}$.
 
 We have proved that:
 \begin{itemize}
 \item For any $N\geq 1$, the function $\Psi_{N,A}$ is analytic in $\D_\delta$. Indeed, $\Psi_{N,A}$ is the logarithm of a non-vanishing rational function.
 \item For any $N\geq 1$, any $z\in \D_\eps$, $|\Psi_{N,A}(z)| \leq c_\eps |\mathsf{Supp}_A| + o(\om_A) \sup_{z\in\D_\eps} |\log \alpha_z|$. 
 This is obtained combining~\eqref{equ:expansion_ratio},~\eqref{equ:introduction_F} and~\eqref{equ:bound_F}.
 \item For any \textbf{real} perturbation $u \in (-\delta, \delta)$, $\Psi_{N,A}(u) \underset{N\rightarrow\infty}{\longrightarrow} \log \phi^0_{G, p+u}[A] - \log \phi^0_{G, p}[A]$, which is finite due to the fact that $A$ is a UD-Event (and in particular has a non-zero probability under the probability measures $\phi^0_{G,p+u}$ and $\phi^0_{G,p}$).
 \end{itemize} 
We may apply Vitali's convergence theorem (see \cite[Theorem 4, p. 164]{remmert2013classical}) to conclude that $\Psi_{N,A}$ converges uniformly towards a holomorphic function $\Psi_A$ in a neighbourhood of~0. This concludes the proof of the first item.
 
The second item of the theorem follows directly by observing that $\sup_{z\in\D_\eps} |\log \alpha_z|$ tends to 0 with $\eps$ and that $o(\om_A) \leq |\mathsf{Supp}_A|$. 

\end{proof}

A crucial observation is that the representation of the increments of complex logarithm of UD-events probabilities given by \eqref{equ:expansion_ratio} and~\eqref{equ:introduction_F}  allows to prove the following \emph{complex mixing property} for the complex extensions. 

\begin{proposition}[``Complex mixing'' property for UD-events]
\label{prop:complex_mixing_UDE}
There exist $\delta>0$, $C > 0$ and $\varsigma > 0$ such that for any $z \in \bbD_\delta$, any pair of subgraphs $G, G'$ of $\Z^d$ and any event $A$, if $A$ is a \emph{UD-event} for both $G$ and $G'$, then
\begin{equation}\label{equ:complex_mixing}
\big|\log \frac{\phi^0_{G,p+z}[A]}{ \phi^0_{G,p}[A]} - \log \frac{\phi^0_{G',p+z}[A]}{ \phi^0_{G',p}[A]} \big| \leq C\sum_{e \in \mathsf{Supp}_A} \exp\big(-\varsigma  \mathrm{d}(e, E(G) \Delta E(G')) \big).  
\end{equation}
where $\mathrm{d}$ stands for the graph distance\footnote{The distance between two sets of edges is defined as the length of the smallest path containing at least one edge from each set of edges.} in $\bbZ^d$ and $E(G) \Delta E(G') \subset E(\Z^d)$ is the symmetric difference between the edge set of $G$ and the edge set of $G'$. 
\end{proposition}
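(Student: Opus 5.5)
The plan is to use the explicit representation of the logarithmic increments from the proof of Theorem~\ref{thm:clust_exp_UDE}. For a finite truncation $G_N = G\cap\Lambda_N$ (and similarly $G'_N = G'\cap \Lambda_N$), combining~\eqref{equ:expansion_ratio} with~\eqref{equ:introduction_F} gives
\begin{equation}
\log \frac{\phi^0_{G_N,p+z}[A]}{\phi^0_{G_N,p}[A]} = o(\om_A)\log\alpha_z + F_{G_N,A}(z),
\end{equation}
and the same formula holds with $G'_N$. Since the uniqueness property forces the configuration $\om_A$ to be the same for both graphs, the term $o(\om_A)\log \alpha_z$ cancels in the difference. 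It therefore suffices to bound $|F_{G_N,A}(z) - F_{G'_N,A}(z)|$ uniformly in $N$, and then pass to the limit $N\to\infty$ using the convergence established in Theorem~\ref{thm:clust_exp_UDE}.

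Next, I write both $F$'s using the simplified form~\eqref{equ:simplified_F}. In each of them, only clusters $(\gamma_1,\dots,\gamma_n)$ with $\mathsf{Span}(\cup\gamma_i)\cap\mathsf{Supp}_A\neq\emptyset$ contribute. Now take such a cluster whose span also avoids $E(G)\Delta E(G')$. Intersected with its span, the graphs $G_N$ and $G'_N$ agree, and so do $G_N\setminus \mathsf{Supp}_A$ and $G'_N\setminus\mathsf{Supp}_A$. Taking $N$ large enough that the boundary of $\Lambda_N$ is far from the support, the truncation introduces no discrepancy either. By the subgraph compatibility property (2) of Proposition~\ref{prop:clust_exp_comp}, the weights of every $\gamma_k$ then coincide in the two expansions. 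The polymer sets also agree near such clusters, since membership in $\sfP^L$ depends only on the vertex set near $\gamma$. Hence these terms cancel exactly, and the difference is bounded by
\begin{equation}
\sum_{n\ge1}\sum_{\substack{\gamma_1,\dots,\gamma_n\\ \mathsf{Span}(\cup\gamma_i)\cap\mathsf{Supp}_A\neq\emptyset\\ \mathsf{Span}(\cup\gamma_i)\cap (E(G)\Delta E(G'))\neq\emptyset}} |\varphi_n(\gamma_1,\dots,\gamma_n)| \cdot 2\max\prod_k|\sfw(\gamma_k)|,
\end{equation}
where the sum runs over polymers in $\sfP^L_{G_N}\cup\sfP^L_{G'_N}$ and the maximum is over the four choices of base graph.

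To conclude, I will bound this tail with the standard cluster expansion estimate, as in~\cite[Corollary A.2]{dalimonte_analyticity_2026}. By the uniform bound~\eqref{equ:bound_weight} with $c$ large, for every edge $e$,
\begin{equation}
\sum_{\substack{\text{clusters } \bar\gamma: \ \mathsf{Span}(\bar\gamma)\ni e \\ \mathsf{Span}(\bar\gamma)\ \text{of diameter}\ \ge R}} |\varphi(\bar\gamma)|\prod|\sfw(\gamma)| \le C e^{-c' R/L}.
\end{equation}
This follows from the Koteckỳ--Preiss type bound $\sum_{\bar\gamma\ni v}|\varphi|\prod|\sfw|e^{a\|\bar\gamma\|}<\infty$, since a connected cluster of total size $\|\bar\gamma\|$ has diameter at most $L\|\bar\gamma\|$. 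For each $e\in\mathsf{Supp}_A$, a cluster whose span meets both $e$ and $E(G)\Delta E(G')$ has diameter at least $\mathrm{d}(e,E(G)\Delta E(G'))-2L$. Summing over $e\in\mathsf{Supp}_A$ then gives~\eqref{equ:complex_mixing} with $\varsigma = c'/L$.

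I expect the main obstacle to be the cancellation step. One has to check carefully that the Ursell functions and the polymer sets genuinely agree for both pairs $(G_N, G_N\setminus\mathsf{Supp}_A)$ and $(G'_N, G'_N\setminus\mathsf{Supp}_A)$. The subtlety is that the vertex sets of $G$ and $G'$ may differ, which changes $\sfP^L$. I would handle this by noting that clusters involving a polymer lying in only one of the two polymer sets must come within distance $L$ of $E(G)\Delta E(G')$, or of an isolated vertex difference, so they fall into the tail sum anyway. A second point to check is that $\delta$ may need to be shrunk so that the exponential tail constant $c'$ is uniform in $z\in\D_\delta$.
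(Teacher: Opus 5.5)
Your proposal is correct and follows essentially the same route as the paper. You cancel the $\alpha_z^{o(\om_A)}$ factor, use subgraph compatibility to reduce $F_{G_N,A}-F_{G'_N,A}$ to clusters whose span meets both $\mathsf{Supp}_A$ and $E(G)\Delta E(G')$, and use connectivity of $\cup\gamma_i$ to extract $\exp(-\varsigma\,\mathrm{d}(e,E(G)\Delta E(G')))$; the remainder, with weights $\e^{|\gamma|}\tilde\sfw(\gamma)$, is controlled by the anchored estimate \cite[Corollary A.2]{dalimonte_analyticity_2026}, and you then let $N\to\infty$. The paper does not address the vertex-set subtlety you raise either, and it is vacuous in the application, where every $G_{\bar\frakc}$ has vertex set $\Z^d$, so the polymer sets coincide.
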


\begin{proof}[Proof of Proposition~\ref{prop:complex_mixing_UDE}]
Fix a large constant $c>0$ and take the constants $L>0$ and $\delta > 0$ provided by Proposition \ref{prop:clust_exp_comp}.
Fix $G,G'$ and $A$ as in the statement. 
Let $N\geq 1$ be large enough so that $\mathsf{Supp}_A \subset G\cap\Lambda_N$, and $\mathsf{Supp}_A \subset G' \cap \Lambda_N$. Again, we will shorten notation by setting $G_N \coloneqq G \cap \Lambda_N$ and $G'_N \coloneqq G' \cap \Lambda_N$.
Applying \eqref{equ:expansion_ratio} and \eqref{equ:introduction_F} to both $G_N$ and $G'_N$ gives, for $z \in \D_{\delta}$,
\begin{equation}\label{equ:decomposition_mixing}
\log \frac{\phi^0_{G_N,p+z}[A]}{ \phi^0_{G_N,p}[A]} - \log \frac{\phi^0_{G'_N,p+z}[A]}{ \phi^0_{G'_N,p}[A]} 
=  F_{G_N, A}(z) - F_{G'_N, A}(z),
 \end{equation}
where $F_{H,A}$ is the function defined below \eqref{equ:introduction_F}. As in \eqref{equ:simplified_F}, the subgraph compatibility property allows to rewrite the right hand term has
\begin{multline*}
 \sum_{n \geq 1} 
 \Big\{
  \sum_{\substack{\gamma_1, \dots, \gamma_n \in \sfP_{G_N}^L  \\ \mathsf{Span}(\cup\gamma_i)\cap \mathsf{Supp}_A \neq \emptyset} }
  \varphi_n(\gamma_1, \dots, \gamma_n)
  \Big\{
  \prod_{k=1}^n \sfw_{G_N \setminus \mathsf{Supp}_A, z}(\gamma_k)
  -
  \prod_{k=1}^n \sfw_{G_N, z}(\gamma_k)
  \Big\}
  \\ 
  -
    \sum_{\substack{\gamma_1, \dots, \gamma_n \in \sfP_{G'_N}^L  \\ \mathsf{Span}(\cup\gamma_i)\cap \mathsf{Supp}_A \neq \emptyset} }
  \varphi_n(\gamma_1, \dots, \gamma_n)  
  \Big\{
  \prod_{k=1}^n \sfw_{G'_N \setminus \mathsf{Supp}_A, z}(\gamma_k)
  -
  \prod_{k=1}^n \sfw_{G'_N, z}(\gamma_k)
  \Big\} 
  \Big\}.
 \end{multline*} 

Observe that further cancellations can be obtained in the difference between the two sums over polymer families above, by using the subgraph compatibility property of the cluster weights~$\mathsf{w}_{G,z}$. 
Indeed, the only non-cancelling terms are the families $\gamma_1, \dots, \gamma_n$ that are such that
$\mathsf{Span}(\cup \gamma_i) \cap (E(G)\Delta E(G')) \neq \emptyset$.  
Define the set 
\begin{equation}
E_n \coloneqq \Big\{ (\gamma_1, \dots, \gamma_n)\in (\sfP^L_{G_N})^n, 
\begin{cases}
\mathsf{Span}(\cup \gamma_i)\cap\mathsf{Supp}_A &\neq \emptyset \\
\mathsf{Span}(\cup \gamma_i)\cap (E(G) \Delta E(G'))  &\neq \emptyset.
\end{cases}
\Big\}.
\end{equation}
Also define $E'_N$ in the same way but replacing $G_N$ by $G'_N$. 
For $\eps < \delta$, define $\tilde\sfw(\gamma) \coloneqq C_\eps\e^{-c|\gamma| }$ to be the uniform bound obtained in Proposition~\ref{prop:clust_exp_comp}.
By the previous observation and by choice of $\tilde\sfw(\gamma)$ we obtain, for $z \in \D_{\eps}$, 
\begin{equation} |F_{G_N, A}(z) - F_{G'_N, A}(z)| \leq 4\sum_{n\geq 1}\sum_{\gamma_1, \dots, \gamma_n \in E_N\cup E'_n}|\varphi_n(\gamma_1, \dots, \gamma_n)| \prod_{k=1}^n \tilde\sfw(\gamma_k). \end{equation}

Recall that when $\cup \gamma_i$ is not connected, the definition of the Ursell functions implies that $\varphi(\gamma_1,\dots,\gamma_n) = 0$. Fix an edge $e$ of $\mathsf{Supp}_A$ and consider $\gamma_1, \dots, \gamma_n \in E_n\cup E'_n$ such that $\mathsf{Span}(\cup \gamma_i)$ contains $e$ and such that $\cup \gamma_i$ is connected.
By definition of $E_n \cup E'_n$, the graph $\mathsf{Span}(\cup \gamma_i)$ also contains a vertex which is the endpoint of an edge in the symmetric difference $E(G)\Delta E(G')$.

As a consequence, it follows from the fact that $\cup \gamma_i$ is connected that
\begin{align*}
|\gamma_1| + \dots + |\gamma_n| \geq |\gamma_1\cup\cdots\cup\gamma_n| \geq \mathrm{d}(e, E(G)\Delta E(G')) / (2dL) - 2.
\end{align*}
So, summing over $e\in \mathsf{Supp}_A$, we get two constants $C >0$ and $\varsigma>0$ such that, for $z \in \D_{\eps}$,
\begin{equation}
|F_{G_N, A}(z) - F_{G'_N, A}(z)| \leq C \sum_{e \in \mathsf{Supp}_A} \exp(-\varsigma \mathrm{d}(e, E(G)\Delta E(G'))) \sfQ_N(e),
\end{equation}
where
\begin{equation}
\sfQ_N(e) = \sum\limits_{n \geq 1}\sum_{\substack {\gamma_1, \dots, \gamma_n \in \sfP^L_{G_N \cup G'_N } \\ e \in \mathsf{Span}(\cup \gamma_i)} } |\varphi_n(\gamma_1, \dots, \gamma_n)| \prod_{k=1}^n \exp(|\gamma_k|)\tilde\sfw(\gamma_k).  
\end{equation}
To conclude, it remains to argue that $\sfQ_N(e)$ is uniformly bounded in $e$ and $N$. 

Once again we use the useful anchored cluster expansion estimate~\cite[Corollary A.2]{dalimonte_analyticity_2026}, which can be applied to $Q_N$ if $c$ is sufficiently large, since $\exp(|\gamma|)\tilde\sfw(\gamma) \leq C_\eps e^{-|\gamma|(c-1)}$. This yields the boundedness of $\sfQ_N$, and the conclusion that, for a possibly different constant $C > 0$, the following holds for all $z \in \D_{\eps}$: 
\begin{equation}
|F_{G_N, A}(z) - F_{G'_N, A}(z)| \leq C\sum_{e \in \mathsf{Supp}_A} \exp(-\varsigma \mathrm{d}(e, E(G)\Delta E(G'))). 
\end{equation} 

We insert this estimate in~\eqref{equ:decomposition_mixing}. 
The conclusion follows by letting $N$ tend to infinity. 
\end{proof}

\subsection{Conclusion: regularity of the FK-Ruelle operator}

We are now able to provide a proof of Theorem~\ref{thm:complex_reg_ruelle}.
Recall that the decomposition in irreducible pieces defined in Section \ref{sec:perturbed_ruelle} is performed in a cone $ \calY_\kappa^{t,+}(0)$ associated to a direction $t \in \partial \calW_p$ with a surcharge~ $\kappa \in (0,1)$. Recall also the notation $\calU_{\delta}$ for the bidisk $\bbD_\delta \times \bbD_\delta(t) \subset \C^{d+1}$. 

\begin{proof}[Proof of Theorem~\ref{thm:complex_reg_ruelle}]
We separately prove the three items of the statement.

\medskip
\noindent{\bf Uniform analyticity of the FK-Ruelle potential.}
We recall the rewriting~\eqref{equ:markov_for_potential}:
\begin{equation}\label{equ:markov_for_potential_bis}
 \psi^s_p(\bar\frakc) = \log \phi^0_{\bar\frakc_1,p}[\mathsf{Ins}(\bar\frakc_1)] + \log \phi^0_{G_{\bar\frakc},p}[\mathsf{Out}(\bar\frakc_1)] + \langle s, X(\bar\frakc_1) \rangle,
\end{equation}
where $G_{\bar\frakc}$ was defined below~\eqref{equ:markov_for_potential}.
Extending the last summand to complex $s$ is obvious: it is actually entire in $s$. 
Concerning the complex extension in $p$, it was observed in Remark~\ref{rq:rewriting_psi_log_UDE} that both $\mathsf{Ins}(\bar\frakc_1)$ and $\mathsf{Out}(\bar\frakc_1)$ are UD-events for $\bar\frakc_1$ and $G_{\bar\frakc}$ respectively.
Thus, we may apply the first item of Theorem~\ref{thm:clust_exp_UDE} to get a constant $\delta > 0$ independent of $\bar\frakc$ such that the two logarithmic terms have analytic extensions defined on $\D_{\delta}$. 
Summing the analytic extensions obtained above yields an analytic extension $(z,s) \mapsto\psi^s_{p+z}(\bar\frakc)$ of the real FK-Ruelle potential defined on $\calU_{\delta}$. This finishes the proof of the first item of the theorem.

\medskip
\noindent{\bf Proof of \eqref{equ:complex_theta_bound_potential}.}
Let $\theta \in (0,1)$ be fixed by Theorem~\ref{thm:complex_reg_ruelle}. Consider $(z,s) \in \calU_{\delta}$ and $\bar\frakc, \bar\frakc'$ such that $\bfi(\bar\frakc, \bar\frakc') \geq 1$. We set $G \coloneqq G_{\bar\frakc}$, $G' \coloneqq G_{\bar\frakc'}$ and $A = \mathsf{Out}(\bar\frakc_1)$. Using a triangular inequality, equation~\eqref{equ:markov_for_potential_bis} and that $\bar\frakc_1 = \bar\frakc'_1$, we get
\begin{align*}
|\psi^s_{p+z}(\bar\frakc) - \psi^s_{p+z}(\bar\frakc') | 
&\leq |\psi^s_{p}(\bar\frakc) - \psi^s_{p}(\bar\frakc') | +  |\psi^s_{p+z}(\bar\frakc) - \psi^s_{p}(\bar\frakc) - (\psi^s_{p+z}(\bar\frakc') - \psi^s_{p}(\bar\frakc'))|\\
&= |\psi^s_{p}(\bar\frakc) - \psi^s_{p}(\bar\frakc') | + \big|\log \frac{\phi^0_{G,p+z}[A]}{ \phi^0_{G,p}[A]} - \log \frac{\phi^0_{G',p+z}[A]}{ \phi^0_{G',p}[A]} \big|. 
\end{align*}
Since for any real subcritical parameter $p$, the potential $\psi^s_{p}$ satisfies $|\psi^s_{p}|_\theta < \infty$ (see~\eqref{equ:real_theta_bound_potential}), there exists a constant $C' > 0$ such that:
\begin{equation}
|\psi^s_{p}(\bar\frakc) - \psi^s_{p}(\bar\frakc') | \leq  C' \theta^{\bfi(\bar\frakc, \bar\frakc')}.
\end{equation}
By Proposition~\ref{prop:complex_mixing_UDE},
\begin{equation}\label{equ:bound_lipschitz_two_terms}
\big|\log \frac{\phi^0_{G,p+z}[A]}{ \phi^0_{G,p}[A]} - \log \frac{\phi^0_{G',p+z}[A]}{ \phi^0_{G',p}[A]} \big|
\leq  
C\sum_{e \in \mathsf{Supp}_A} \exp\big(-\varsigma \mathrm{d}(e, E(G)\Delta E(G')) \big).
\end{equation}
By definition of $\Omega^t_{\kappa}$, every non-trivial irreducible cluster in the decomposition has a non-null displacement in the cone $ \calY_\kappa^{t,+}(0)$. So, there exists a small constant $\varsigma' > 0$ such that 
\begin{align}
\mathrm{d}(\mathsf{Supp}_A,E(G)\Delta E(G')) \geq \varsigma' (\bfi(\bar\frakc, \bar\frakc') - 1).
\end{align}
So, up to changing $\theta$ and $C$, and observing that $\mathsf{Supp}_A$ and $E(G) \Delta E(G')$ are contained in a backward cone and a forward cone respectively, we get,
\begin{equation}
\sum_{e \in \mathsf{Supp}_A}\exp(-\varsigma\mathrm{d}(e, E(G) \Delta E(G')) ) \leq C \theta^{\bfi(\bar\frakc, \bar\frakc')}.
\end{equation}
Combining the previous bounds yields a constant $C > 0$ independent of $z$, $s$, $\bar\frakc$ and $\bar\frakc'$ such that, for $\theta$ sufficiently close to $1$,
\begin{equation}
|\psi^s_{p+z}(\bar\frakc) - \psi^s_{p+z}(\bar\frakc') | \leq C \theta^{\bfi(\bar\frakc, \bar\frakc')},
\end{equation}
which concludes the proof of \eqref{equ:complex_theta_bound_potential}.

\medskip
\noindent{\bf Proof of~\eqref{equ:complex_summability_potential}.}
Fix $0<\eps<\delta $, where $\delta$ is fixed in Theorem~\ref{thm:clust_exp_UDE}.
Take $(z,s) \in \calU_{\eps}$ and $\bar\frakc \in \Omega^t_\kappa$. 
Observe that applying~\eqref{equ:double_sided_bound} from Theorem~\ref{thm:clust_exp_UDE} to the two logarithm terms of \eqref{equ:markov_for_potential_bis} yields the following bound  
\begin{equation}
| \psi^s_{p+z}(\bar\frakc) - \psi_p^{t}(\bar\frakc) | \leq 2c_\eps |\bar\frakc_1| + \eps \vert X(\bar\frakc_1) \vert \leq (2c_\eps + \eps)|\bar\frakc_1|,
\end{equation}
where $|\bar\frakc_1|$ is the number of edges in $\bar\frakc_1$. 
As $2c_\eps+\eps$ goes to 0 with $\eps$, we abuse notation and replace this quantity by $c_\eps$.
We obtain:
\begin{equation}\label{equ:comparison_complex_real}
\sum_{\bar\frakc'\neq \emptyset:~ \sigma(\bar\frakc') = \bar\frakc} 
\vert \e^{\psi^{s}_{p+z}(\bar\frakc')}  \vert
\leq 
\sum_{\bar\frakc'\neq \emptyset:~ \sigma(\bar\frakc') = \bar\frakc}
\e^{\psi_p^{t}(\bar\frakc') + c_\eps|\bar\frakc'_1|}.
\end{equation}

\emph{A priori}, we cannot directly use the mass gap condition given by \cite{campanino_fluctuation_2008} and recalled in~\eqref{equ:velenik_mass_gap} to bound the right-hand term, because the small perturbation that is introduced is in the \emph{volume} of the piece to-be-added $\bar\frakc'_1$ and not in the displacement in the direction given by $t$. 
However, in the case of subcritical models, we can use the exponential decay of the volume of the cluster to conclude.
We shall use a very classical argument that was for instance already noticed in the proof of \cite[Prop. 4.2]{campanino_gaussian_1991}. Let $K > 0$ to be fixed later. 
We split the right-hand side of~\eqref{equ:comparison_complex_real} according to whether $|\bar\frakc'_1| > K\langle t,X(\bar\frakc'_1) \rangle$ or not.

Let us first treat the part of the sum where $|\bar\frakc'_1| > \langle t,X(\bar\frakc'_1) \rangle$. Fix $k > 0$. By comparison of boundary conditions and translation invariance of the full-plane FK-percolation measure, we get the following bound involving the terms~$\bar\frakc'$ such that~$\vert \bar\frakc_1' \vert  = k$,
\begin{equation} 
\sum\limits_{\substack{\bar\frakc' \neq \emptyset:~ \sigma(\bar\frakc') = \bar\frakc \\ |\bar\frakc'_1| = k}} e^{\psi^{t}_p(\bar\frakc') - \langle t,X(\bar\frakc'_1) \rangle} \leq \phi^0_p[\vert\mathcal{C}_0\vert \geq k ],
\end{equation}
where $\calC_0$ denotes the cluster of 0. Summing the above over all the possible values of $\ell = \langle t, X(\bar\frakc'_1) \rangle$ and $k = \vert \bar\frakc'_1 \vert $ such that $k > K \ell$, we obtain:
\begin{equation}\label{equ:volume_vs_length}
\sum_{\substack{\bar\frakc' \neq \emptyset:~ \sigma(\bar\frakc') = \bar\frakc \\ |\bar\frakc'_1| > K\langle t, X(\bar\frakc'_1) \rangle }} \e^{\psi_p^{t}(\bar\frakc') + c_\eps|\bar\frakc'_1|} 
\leq
\sum_{ \ell \geq 1} \sum_{k > K\ell}  \phi^0_p[ |\mathcal{C}_0| \geq k] e^{c_\eps k + \ell}.
\end{equation}
Exponential tails for the volume of the cluster of 0 for any $p<p_c$ have been obtained in~\cite{duminil-copin_sharp_2019}. So, there exists $C,c > 0$ such that $\phi^0_p[ |\mathcal{C}_0| \geq k] \leq C e^{-c k}$. Thus, by choosing $\eps > 0$ sufficiently small such that $c_\eps < c$, the right-hand side of~\eqref{equ:volume_vs_length} is upper bounded by
\begin{equation}\label{equ:final_big_volume}
\sum_{ \ell \geq 1} \sum_{k > K\ell} C \e^{-ck + c_\eps k + \ell} = C (1-\e^{-(c-c_\eps)})^{-1} \sum_{ \ell \geq 1} \e^{\ell(1 -K(c-c_\eps)) } .
\end{equation}
This sum is clearly finite when $K$ is chosen large enough. We now fix such a value of $K$.

On the other hand, the part of the sum where $|\bar\frakc'_1| \leq K\langle t, X(\bar\frakc'_1) \rangle$ satisfies
\begin{align}\label{equ:final_small_volume}
\sum_{\substack{\bar\frakc' \neq \emptyset:~ \sigma(\bar\frakc') = \bar\frakc \\ |\bar\frakc'_1| \leq K \langle t, X(\bar\frakc'_1) \rangle }} 
e^{\psi_p^{t}(\bar\frakc') + c_\eps \vert \frakc'_1 \vert }
&\leq 
\sum_{\substack{\bar\frakc' \neq \emptyset:~ \sigma(\bar\frakc') = \bar\frakc \\ |\bar\frakc'_1| \leq K \langle t, X(\bar\frakc'_1) \rangle }} 
e^{\psi_p^{t}(\bar\frakc') + c_\eps K \langle t, X(\bar\frakc'_1) \rangle}
\\
&\leq 
\sum_{\bar\frakc' \neq \emptyset : \sigma(\bar\frakc') = \bar\frakc} 
e^{\psi_p^{(1 + c_\eps K)t}(\bar\frakc')}.  
\end{align}

Choosing $\eps$ so small that $\vert c_\eps K t \vert < \delta$, we may apply the mass-gap condition of~\eqref{equ:velenik_mass_gap} to conclude on the convergence of the right-hand side of~\eqref{equ:final_small_volume}. 

Gathering~\eqref{equ:comparison_complex_real} with~\eqref{equ:final_big_volume} and~\eqref{equ:final_small_volume} shows that for $\eps>0$ small enough
\begin{align*}
\sup_{(z,s) \in \calU_{\eps}}  \sup_{\bar{\frakc} \in \Omega_\kappa^t} \sum_{\bar{\frakc}' \neq \emptyset:~\sigma(\bar{\frakc}') = \bar{\frakc}} \vert \e^{\psi^{s}_{p+z}(\bar{\frakc}')} \vert < +\infty.
\end{align*}
which is exactly ~\eqref{equ:complex_summability_potential}.
\end{proof}

\sectionstarwithouttoc{Acknowledgements}

We thank Sébastien Ott for useful discussions and comments on the draft of this manuscript.
We also thank Gordon Slade and Yvan Velenik for comments on an early draft of this work.  
The research of LD was supported by the French National Research Agency (ANR), project number \texttt{ANR-23-CPJ1-0150-01}. 
The research of LG is supported by the Swiss National Science Foundation.
\vspace*{1cm}

\sectionstarwithouttoc{AI disclosure statement}

AI was not used by the authors at any stage of this work. 
\vspace*{1cm}

\newpage

\bibliographystyle{amsalpha}
\bibliography{biblio_corr_length.bib}

@book{remmert2013classical,
  title={Classical Topics in Complex Function Theory},
  author={Remmert, R. and Kay, L.D.},
  isbn={9781475729566},
  lccn={97010091},
  series={Graduate Texts in Mathematics},
  url={https://books.google.fr/books?id=7TfnBwAAQBAJ},
  year={2013},
  publisher={Springer New York}
}

@book {grimmett,
    AUTHOR = {Grimmett, Geoffrey},
     TITLE = {Percolation},
    SERIES = {Grundlehren der mathematischen Wissenschaften [Fundamental
              Principles of Mathematical Sciences]},
    VOLUME = {321},
   EDITION = {Second},
 PUBLISHER = {Springer-Verlag, Berlin},
      YEAR = {1999},
     PAGES = {xiv+444},
      ISBN = {3-540-64902-6},
   MRCLASS = {60K35 (60-02 82B43)},
  MRNUMBER = {1707339},
MRREVIEWER = {Neal\ Madras},
       DOI = {10.1007/978-3-662-03981-6},
       URL = {https://doi.org/10.1007/978-3-662-03981-6},
}

@misc{liu_crossover_2026,
      title={Crossover from subcritical to critical decay: random walk, self-avoiding walk, percolation}, 
      author={Yucheng Liu and Gordon Slade},
      year={2026},
      howpublished={arXiv preprint 2605.15545},
      eprint={2605.15545},
      archivePrefix={arXiv},
      primaryClass={math.PR},
      url={https://arxiv.org/abs/2605.15545}, 
}

@article {ott_decay_2022,
    AUTHOR = {Ott, S\'{e}bastien},
     TITLE = {Existence and properties of connections decay rate for high
              temperature percolation models},
   JOURNAL = {Electron. J. Probab.},
  FJOURNAL = {Electronic Journal of Probability},
    VOLUME = {27},
      YEAR = {2022},
     PAGES = {Paper No. 100, 19},
      ISSN = {1083-6489},
   MRCLASS = {60K35 (82B43)},
  MRNUMBER = {4460271},
       DOI = {10.1214/22-ejp822},
       URL = {https://doi.org/10.1214/22-ejp822},
}

@article {alexander_weak_1998,
    AUTHOR = {Alexander, Kenneth S.},
     TITLE = {On weak mixing in lattice models},
   JOURNAL = {Probab. Theory Related Fields},
  FJOURNAL = {Probability Theory and Related Fields},
    VOLUME = {110},
      YEAR = {1998},
    NUMBER = {4},
     PAGES = {441--471},
      ISSN = {0178-8051,1432-2064},
   MRCLASS = {60K35 (82B05)},
  MRNUMBER = {1626951},
MRREVIEWER = {Yvan\ Velenik},
       DOI = {10.1007/s004400050155},
       URL = {https://doi.org/10.1007/s004400050155},
}

@book{book_velenik,
place={Cambridge},
title={Statistical Mechanics of Lattice Systems: A Concrete Mathematical Introduction},
DOI={10.1017/9781316882603},
ISBN={978-1-107-18482-4},
publisher={Cambridge University Press},
author={Friedli, Sacha and Velenik, Yvan},
year={2017}
}

@article {bricmont_frohlich_1985,
    AUTHOR = {Bricmont, J. and Fr\"{o}hlich, J.},
     TITLE = {Statistical mechanical methods in particle structure analysis
              of lattice field theories. {II}. {S}calar and surface models},
   JOURNAL = {Comm. Math. Phys.},
  FJOURNAL = {Communications in Mathematical Physics},
    VOLUME = {98},
      YEAR = {1985},
    NUMBER = {4},
     PAGES = {553--578},
      ISSN = {0010-3616,1432-0916},
   MRCLASS = {81E25 (82A05 82A68)},
  MRNUMBER = {789871},
MRREVIEWER = {Th\'{o}rdur\ J\'{o}nsson},
       URL = {http://projecteuclid.org/euclid.cmp/1103942542},
}

@article {lebowitz_penrose,
    AUTHOR = {Penrose, O. and Lebowitz, J. L.},
     TITLE = {On the exponential decay of correlation functions},
   JOURNAL = {Comm. Math. Phys.},
  FJOURNAL = {Communications in Mathematical Physics},
    VOLUME = {39},
      YEAR = {1974},
     PAGES = {165--184},
      ISSN = {0010-3616,1432-0916},
   MRCLASS = {82.31},
  MRNUMBER = {432092},
MRREVIEWER = {M.\ Lawrence\ Glasser},
       URL = {http://projecteuclid.org/euclid.cmp/1103860196},
}

@article {frohlich_rodriguez_2012,
    AUTHOR = {Fr\"{o}hlich, J\"{u}rg and Rodriguez, Pierre-Fran\c{c}ois},
     TITLE = {Some applications of the {L}ee-{Y}ang theorem},
   JOURNAL = {J. Math. Phys.},
  FJOURNAL = {Journal of Mathematical Physics},
    VOLUME = {53},
      YEAR = {2012},
    NUMBER = {9},
     PAGES = {095218, 15},
      ISSN = {0022-2488,1089-7658},
   MRCLASS = {82B20},
  MRNUMBER = {2905800},
       DOI = {10.1063/1.4749391},
       URL = {https://doi.org/10.1063/1.4749391},
}

@article {frohlich_rodriguez_2017,
    AUTHOR = {Fr\"{o}hlich, J\"{u}rg and Rodr\'{\i}guez,
              Pierre-Fran\c{c}ois},
     TITLE = {On cluster properties of classical ferromagnets in an external
              magnetic field},
   JOURNAL = {J. Stat. Phys.},
  FJOURNAL = {Journal of Statistical Physics},
    VOLUME = {166},
      YEAR = {2017},
    NUMBER = {3-4},
     PAGES = {828--840},
      ISSN = {0022-4715,1572-9613},
   MRCLASS = {82D40 (82B20)},
  MRNUMBER = {3607592},
       DOI = {10.1007/s10955-016-1556-2},
       URL = {https://doi.org/10.1007/s10955-016-1556-2},
}

@book{convexity,
    author = {Webster, R.},
    isbn = {9780198531470},
    lccn = {93034951},
    publisher = {Oxford University Press},
    series = {Oxford science publications},
    title = {Convexity},
    url = {https://books.google.fr/books?id=SWQmykCwabwC},
    year = {1994}}

@article {borgs_covariance_1996,
    AUTHOR = {Borgs, C. and Chayes, J. T.},
     TITLE = {The covariance matrix of the {P}otts model: a random cluster
              analysis},
   JOURNAL = {J. Statist. Phys.},
  FJOURNAL = {Journal of Statistical Physics},
    VOLUME = {82},
      YEAR = {1996},
    NUMBER = {5-6},
     PAGES = {1235--1297},
      ISSN = {0022-4715,1572-9613},
   MRCLASS = {82B20 (60K35 82B27 82B43)},
  MRNUMBER = {1374925},
MRREVIEWER = {Aernout\ C. D. van Enter},
       DOI = {10.1007/BF02183383},
       URL = {https://doi.org/10.1007/BF02183383},
}

@book {baumgartel_book,
    AUTHOR = {Baumg\"{a}rtel, H.},
     TITLE = {Analytic perturbation theory for matrices and operators},
    SERIES = {Operator Theory: Advances and Applications},
    VOLUME = {15},
 PUBLISHER = {Birkh\"{a}user Verlag, Basel},
      YEAR = {1985},
     PAGES = {427},
      ISBN = {3-7643-1664-0},
   MRCLASS = {47A55},
  MRNUMBER = {878974},
}

@book {lax,
    AUTHOR = {Lax, Peter D.},
     TITLE = {Functional analysis},
    SERIES = {Pure and Applied Mathematics (New York)},
 PUBLISHER = {Wiley-Interscience [John Wiley \& Sons], New York},
      YEAR = {2002},
     PAGES = {xx+580},
      ISBN = {0-471-55604-1},
   MRCLASS = {47-01 (46-01)},
  MRNUMBER = {1892228},
MRREVIEWER = {Melvin\ Faierman},
}

@article {baumgartel_article,
    AUTHOR = {Baumg\"{a}rtel, H.},
     TITLE = {The theory of analytic perturbations for linear operators
              depending on several complex variables},
   JOURNAL = {Mat. Issled.},
  FJOURNAL = {Akademiya Nauk Moldavsko\u{\i} SSR. Institut Matematiki s
              Vychislitel\cprime nym Tsentrom. Matematicheskie
              Issledovaniya},
    NUMBER = {1(31)},
    VOLUME = {9},
      YEAR = {1974},
     PAGES = {17--39},
      ISSN = {0542-9994},
   MRCLASS = {47A55},
  MRNUMBER = {353019},
MRREVIEWER = {Alexander\ A.\ Pankov},
}

@article {parry_zeta_1990,
    AUTHOR = {Parry, William and Pollicott, Mark},
     TITLE = {Zeta functions and the periodic orbit structure of hyperbolic
              dynamics},
   JOURNAL = {Ast\'{e}risque},
  FJOURNAL = {Ast\'{e}risque},
    NUMBER = {187-188},
      YEAR = {1990},
     PAGES = {268},
      ISSN = {0303-1179,2492-5926},
   MRCLASS = {58F20 (58F11 58F15)},
  MRNUMBER = {1085356},
MRREVIEWER = {Nicola\u{\i}\ T. A. Haydn},
}

@article {de-coninck_young_1989,
    AUTHOR = {De Coninck, Jo\"{e}l and Dunlop, Fran\c{c}ois and Rivasseau,
              Vincent},
     TITLE = {On the microscopic validity of the {W}ulff construction and of
              the generalized {Y}oung equation},
   JOURNAL = {Comm. Math. Phys.},
  FJOURNAL = {Communications in Mathematical Physics},
    VOLUME = {121},
      YEAR = {1989},
    NUMBER = {3},
     PAGES = {401--419},
      ISSN = {0010-3616,1432-0916},
   MRCLASS = {82A05 (60K35 82A68)},
  MRNUMBER = {990773},
MRREVIEWER = {C.\ Eugene\ Wayne},
       URL = {http://projecteuclid.org/euclid.cmp/1104178139},
}

@article {messager_antonov_1991,
    AUTHOR = {Messager, Alain and Miracle-Sol\'{e}, Salvador and Ruiz, Jean
              and Shlosman, Senya},
     TITLE = {Interfaces in the {P}otts model. {II}. {A}ntonov's rule and
              rigidity of the order disorder interface},
   JOURNAL = {Comm. Math. Phys.},
  FJOURNAL = {Communications in Mathematical Physics},
    VOLUME = {140},
      YEAR = {1991},
    NUMBER = {2},
     PAGES = {275--290},
      ISSN = {0010-3616,1432-0916},
   MRCLASS = {82B20},
  MRNUMBER = {1124270},
MRREVIEWER = {Klaus\ Ziegler},
       URL = {http://projecteuclid.org/euclid.cmp/1104247982},
}

@article {wu_duality_1997,
    AUTHOR = {Wu, F. Y.},
     TITLE = {Duality relations for {P}otts correlation functions},
   JOURNAL = {Phys. Lett. A},
  FJOURNAL = {Physics Letters. A},
    VOLUME = {228},
      YEAR = {1997},
    NUMBER = {1-2},
     PAGES = {43--47},
      ISSN = {0375-9601,1873-2429},
   MRCLASS = {82B20},
  MRNUMBER = {1448595},
       DOI = {10.1016/S0375-9601(97)00094-7},
       URL = {https://doi.org/10.1016/S0375-9601(97)00094-7},
}

@article {sarig_thermodynamic_1999,
    AUTHOR = {Sarig, Omri M.},
     TITLE = {Thermodynamic formalism for countable {M}arkov shifts},
   JOURNAL = {Ergodic Theory Dynam. Systems},
  FJOURNAL = {Ergodic Theory and Dynamical Systems},
    VOLUME = {19},
      YEAR = {1999},
    NUMBER = {6},
     PAGES = {1565--1593},
      ISSN = {0143-3857,1469-4417},
   MRCLASS = {37A99 (37B99 37C30 37D35)},
  MRNUMBER = {1738951},
MRREVIEWER = {Irene\ Hueter},
       DOI = {10.1017/S0143385799146820},
       URL = {https://doi.org/10.1017/S0143385799146820},
}

@book {ruelle_thermodynamic_2004,
    AUTHOR = {Ruelle, David},
     TITLE = {Thermodynamic formalism},
    SERIES = {Cambridge Mathematical Library},
   EDITION = {Second},
      NOTE = {The mathematical structures of equilibrium statistical
              mechanics},
 PUBLISHER = {Cambridge University Press, Cambridge},
      YEAR = {2004},
     PAGES = {xx+174},
      ISBN = {0-521-54649-4},
   MRCLASS = {82B05 (37A60 37D35 37N20 82-02)},
  MRNUMBER = {2129258},
       DOI = {10.1017/CBO9780511617546},
       URL = {https://doi.org/10.1017/CBO9780511617546},
}

@book {kato_perturbation_1995,
    AUTHOR = {Kato, Tosio},
     TITLE = {Perturbation theory for linear operators},
    SERIES = {Classics in Mathematics},
   EDITION = {1980},
 PUBLISHER = {Springer-Verlag, Berlin},
      YEAR = {1995},
     PAGES = {xxii+619},
      ISBN = {3-540-58661-X},
   MRCLASS = {47A55 (46-00 47-00)},
  MRNUMBER = {1335452},
}

@book {baladi_positive_2000,
    AUTHOR = {Baladi, Viviane},
     TITLE = {Positive transfer operators and decay of correlations},
    SERIES = {Advanced Series in Nonlinear Dynamics},
    VOLUME = {16},
 PUBLISHER = {World Scientific Publishing Co., Inc., River Edge, NJ},
      YEAR = {2000},
     PAGES = {x+314},
      ISBN = {981-02-3328-0},
   MRCLASS = {37C30 (37-02 37A25 37D20 47B38)},
  MRNUMBER = {1793194},
MRREVIEWER = {J\'{e}r\^{o}me\ Buzzi},
       DOI = {10.1142/9789812813633},
       URL = {https://doi.org/10.1142/9789812813633},
}

@article {campanino_fluctuation_2008,
    AUTHOR = {Campanino, Massimo and Ioffe, Dmitry and Velenik, Yvan},
     TITLE = {Fluctuation theory of connectivities for subcritical random
              cluster models},
   JOURNAL = {Ann. Probab.},
  FJOURNAL = {The Annals of Probability},
    VOLUME = {36},
      YEAR = {2008},
    NUMBER = {4},
     PAGES = {1287--1321},
      ISSN = {0091-1798,2168-894X},
   MRCLASS = {60K35 (37C30 60F15 60K15 82B44)},
  MRNUMBER = {2435850},
MRREVIEWER = {Rongfeng\ Sun},
       DOI = {10.1214/07-AOP359},
       URL = {https://doi.org/10.1214/07-AOP359},
}

@article {ocaroll_analyticity_1984-1,
    AUTHOR = {O'Carroll, Michael},
     TITLE = {Analyticity properties and a convergent expansion for the
              inverse correlation length of the high-temperature
              {$d$}-dimensional {I}sing model},
   JOURNAL = {J. Statist. Phys.},
  FJOURNAL = {Journal of Statistical Physics},
    VOLUME = {34},
      YEAR = {1984},
    NUMBER = {3-4},
     PAGES = {597--608},
      ISSN = {0022-4715,1572-9613},
   MRCLASS = {82A68 (81E25 82A05)},
  MRNUMBER = {748804},
       DOI = {10.1007/BF01018560},
       URL = {https://doi.org/10.1007/BF01018560},
}

@article {ocaroll_gluebal_1984,
    AUTHOR = {O'Carroll, Michael and de Almeida Braga, Gast\~{a}o},
     TITLE = {Analyticity properties and a convergent expansion for the
              glueball mass and dispersion curve of strongly coupled
              {E}uclidean {$2+1$} lattice gauge theories},
   JOURNAL = {J. Math. Phys.},
  FJOURNAL = {Journal of Mathematical Physics},
    VOLUME = {25},
      YEAR = {1984},
    NUMBER = {9},
     PAGES = {2741--2743},
      ISSN = {0022-2488,1089-7658},
   MRCLASS = {81E25},
  MRNUMBER = {756575},
       DOI = {10.1063/1.526481},
       URL = {https://doi.org/10.1063/1.526481},
}

@article {aoun_two-point_2022,
    AUTHOR = {Aoun, Yacine and Ott, S\'{e}bastien and Velenik, Yvan},
     TITLE = {On the two-point function of the {P}otts model in the
              saturation regime},
   JOURNAL = {Comm. Math. Phys.},
  FJOURNAL = {Communications in Mathematical Physics},
    VOLUME = {399},
      YEAR = {2023},
    NUMBER = {2},
     PAGES = {1103--1138},
      ISSN = {0010-3616,1432-0916},
   MRCLASS = {82B20 (60K35 82B27)},
  MRNUMBER = {4576765},
MRREVIEWER = {Olivier\ Couronn\'{e}},
       DOI = {10.1007/s00220-022-04574-9},
       URL = {https://doi.org/10.1007/s00220-022-04574-9},
}

@article {aoun_ornstein-zernike_2024,
    AUTHOR = {Aoun, Yacine and Ott, S\'{e}bastien and Velenik, Yvan},
     TITLE = {Ornstein-{Z}ernike behavior for {I}sing models with
              infinite-range interactions},
   JOURNAL = {Ann. Inst. Henri Poincar\'{e} Probab. Stat.},
  FJOURNAL = {Annales de l'Institut Henri Poincar\'{e} Probabilit\'{e}s et
              Statistiques},
    VOLUME = {60},
      YEAR = {2024},
    NUMBER = {1},
     PAGES = {167--207},
      ISSN = {0246-0203,1778-7017},
   MRCLASS = {60K35 (82B20 82D60)},
  MRNUMBER = {4718378},
MRREVIEWER = {Olivier\ Couronn\'{e}},
       DOI = {10.1214/22-aihp1345},
       URL = {https://doi.org/10.1214/22-aihp1345},
}

@article {aoun_non-analyticity_2021,
    AUTHOR = {Aoun, Yacine and Ioffe, Dmitry and Ott, S\'{e}bastien and
              Velenik, Yvan},
     TITLE = {Non-analyticity of the correlation length in systems with
              exponentially decaying interactions},
   JOURNAL = {Comm. Math. Phys.},
  FJOURNAL = {Communications in Mathematical Physics},
    VOLUME = {386},
      YEAR = {2021},
    NUMBER = {1},
     PAGES = {433--467},
      ISSN = {0010-3616,1432-0916},
   MRCLASS = {82B20},
  MRNUMBER = {4287191},
MRREVIEWER = {Simone\ Del Vecchio},
       DOI = {10.1007/s00220-021-04038-6},
       URL = {https://doi.org/10.1007/s00220-021-04038-6},
}

@incollection {ott_asymptotics_2023,
    AUTHOR = {Ott, Sebastien and Velenik, Yvan},
     TITLE = {Asymptotics of correlations in the {I}sing model: a brief
              survey},
 BOOKTITLE = {Topics in statistical mechanics},
    SERIES = {Panor. Synth\`eses},
    VOLUME = {59},
     PAGES = {157--184},
 PUBLISHER = {Soc. Math. France, Paris},
      YEAR = {2023},
      ISBN = {978-2-85629-970-8},
   MRCLASS = {82B20},
  MRNUMBER = {4619313},
}

@article {paes-leme_ornstein-zernike_1978,
    AUTHOR = {Paes-Leme, Paulo Jorge},
     TITLE = {Ornstein-{Z}ernike and analyticity properties for classical
              lattice spin systems},
   JOURNAL = {Ann. Physics},
  FJOURNAL = {Annals of Physics},
    VOLUME = {115},
      YEAR = {1978},
    NUMBER = {2},
     PAGES = {367--387},
      ISSN = {0003-4916,1096-035X},
   MRCLASS = {81C99 (00A89 82A67)},
  MRNUMBER = {513891},
       DOI = {10.1016/0003-4916(78)90160-4},
       URL = {https://doi.org/10.1016/0003-4916(78)90160-4},
}

@article {ocaroll_analyticity_1984-2,
    AUTHOR = {O'Carroll, Michael and Dantas Barbosa, Wilson},
     TITLE = {Analyticity properties and a convergent expansion for the
              inverse correlation length of the low-temperature
              {$d$}-dimensional {I}sing model},
   JOURNAL = {J. Statist. Phys.},
  FJOURNAL = {Journal of Statistical Physics},
    VOLUME = {34},
      YEAR = {1984},
    NUMBER = {3-4},
     PAGES = {609--614},
      ISSN = {0022-4715,1572-9613},
   MRCLASS = {82A68 (82A05)},
  MRNUMBER = {748805},
MRREVIEWER = {J.\ Fernando\ Perez},
       DOI = {10.1007/BF01018561},
       URL = {https://doi.org/10.1007/BF01018561},
}

@article {campanino_gaussian_1991,
    AUTHOR = {Campanino, Massimo and Chayes, J. T. and Chayes, L.},
     TITLE = {Gaussian fluctuations of connectivities in the subcritical
              regime of percolation},
   JOURNAL = {Probab. Theory Related Fields},
  FJOURNAL = {Probability Theory and Related Fields},
    VOLUME = {88},
      YEAR = {1991},
    NUMBER = {3},
     PAGES = {269--341},
      ISSN = {0178-8051,1432-2064},
   MRCLASS = {60K35 (82B43)},
  MRNUMBER = {1100895},
MRREVIEWER = {Neal\ Madras},
       DOI = {10.1007/BF01418864},
       URL = {https://doi.org/10.1007/BF01418864},
}

@inproceedings{chayes_random_1984,
    author = "Chayes, J. T. and Chayes, L.",
    title = "{Percolation and random media: lecture notes for les {H}ouches, summer 1984}",
    booktitle = "{Summer School in Theoretical Physics, Session XLIII: Critical Phenomena, Random Systems, Gauge Theories}",
    reportNumber = "HUTMP B184",
    month = "8",
    year = "1984"
}

@article {abraham_random_1984,
    AUTHOR = {Abraham, D. B. and Chayes, J. T. and Chayes, L.},
     TITLE = {Random surface correlation functions},
   JOURNAL = {Comm. Math. Phys.},
  FJOURNAL = {Communications in Mathematical Physics},
    VOLUME = {96},
      YEAR = {1984},
    NUMBER = {4},
     PAGES = {439--471},
      ISSN = {0010-3616,1432-0916},
   MRCLASS = {82A05 (60K35 82A43)},
  MRNUMBER = {775041},
MRREVIEWER = {Th\'{o}rdur\ J\'{o}nsson},
       URL = {http://projecteuclid.org/euclid.cmp/1103941909},
}

@book {madras_self-avoiding_1993,
    AUTHOR = {Madras, Neal and Slade, Gordon},
     TITLE = {The self-avoiding walk},
    SERIES = {Probability and its Applications},
 PUBLISHER = {Birkh\"{a}user Boston, Inc., Boston, MA},
      YEAR = {1993},
     PAGES = {xiv+425},
      ISBN = {0-8176-3589-0},
   MRCLASS = {82-02 (60K35 82B41 82C22 82D60)},
  MRNUMBER = {1197356},
MRREVIEWER = {Cheng\ Xun\ Wu},
}

@article {chayes_ornstein-zernike_1986,
    AUTHOR = {Chayes, J. T. and Chayes, L.},
     TITLE = {Ornstein-{Z}ernike behavior for self-avoiding walks at all
              noncritical temperatures},
   JOURNAL = {Comm. Math. Phys.},
  FJOURNAL = {Communications in Mathematical Physics},
    VOLUME = {105},
      YEAR = {1986},
    NUMBER = {2},
     PAGES = {221--238},
      ISSN = {0010-3616,1432-0916},
   MRCLASS = {82A05 (60J15 82A25 82A68)},
  MRNUMBER = {849206},
MRREVIEWER = {Anton\ Bovier},
       URL = {http://projecteuclid.org/euclid.cmp/1104115332},
}

@article {duminil-copin_sharp_2019,
    AUTHOR = {Duminil-Copin, Hugo and Raoufi, Aran and Tassion, Vincent},
     TITLE = {Sharp phase transition for the random-cluster and {P}otts
              models via decision trees},
   JOURNAL = {Ann. of Math. (2)},
  FJOURNAL = {Annals of Mathematics. Second Series},
    VOLUME = {189},
      YEAR = {2019},
    NUMBER = {1},
     PAGES = {75--99},
      ISSN = {0003-486X,1939-8980},
   MRCLASS = {60K35},
  MRNUMBER = {3898174},
MRREVIEWER = {Stanislav\ Volkov},
       DOI = {10.4007/annals.2019.189.1.2},
       URL = {https://doi.org/10.4007/annals.2019.189.1.2},
}

@article {campanino_ornstein-zernike_2003,
    AUTHOR = {Campanino, Massimo and Ioffe, Dmitry and Velenik, Yvan},
     TITLE = {Ornstein-{Z}ernike theory for finite range {I}sing models
              above {$T_c$}},
   JOURNAL = {Probab. Theory Related Fields},
  FJOURNAL = {Probability Theory and Related Fields},
    VOLUME = {125},
      YEAR = {2003},
    NUMBER = {3},
     PAGES = {305--349},
      ISSN = {0178-8051,1432-2064},
   MRCLASS = {82B20 (60F15 60K35 82B28)},
  MRNUMBER = {1964456},
       DOI = {10.1007/s00440-002-0229-z},
       URL = {https://doi.org/10.1007/s00440-002-0229-z},
}

@book{grimmett_RCM,
    address = {Berlin, Heidelberg},
    author = {Grimmett, Geoffrey},
    booktitle = {Probability on Discrete Structures},
    doi = {10.1007/978-3-662-09444-0_2},
    isbn = {978-3-662-09444-0},
    publisher = {Springer Berlin Heidelberg},
    title = {The Random-Cluster Model},
    url = {https://doi.org/10.1007/978-3-662-09444-0_2},
    year = {2004},
    }

@misc{messikh_surface_2006,
      title={The surface tension near criticality of the {2D}-{I}sing model}, 
      author={R. J. Messikh},
      year={2006},
      howpublished={arXiv preprint, arXiv:061036},
      eprint={math/0610636},
      archivePrefix={arXiv},
      primaryClass={math.PR},
      url={https://arxiv.org/abs/math/0610636}, 
}

@misc{dalimonte_near-critical_2025,
      title={Near-critical {O}rnstein--{Z}ernike theory for the planar random-cluster model}, 
      author={Lucas D'Alimonte and Ioan Manolescu},
      year={2025},
      howpublished={arXiv preprint, arXiv:2510.13648},
      eprint={2510.13648},
      archivePrefix={arXiv},
      primaryClass={math.PR},
      url={https://arxiv.org/abs/2510.13648}, 
}

@misc{manolescu_wulff_2026,
      title={The {W}ulff crystal of self-dual {FK}-percolation becomes round when approaching criticality}, 
      author={Ioan Manolescu and Maran Mohanarangan},
      year={2026},
      howpublished={arXiv preprint, arXiv:2603.16318},
      eprint={2603.16318},
      archivePrefix={arXiv},
      primaryClass={math.PR},
      url={https://arxiv.org/abs/2603.16318}, 
}

@article {duminil-copin_wulff_2013,
    AUTHOR = {Duminil-Copin, Hugo},
     TITLE = {Limit of the {W}ulff crystal when approaching critically for
              site percolation on the triangular lattice},
   JOURNAL = {Electron. Commun. Probab.},
  FJOURNAL = {Electronic Communications in Probability},
    VOLUME = {18},
      YEAR = {2013},
     PAGES = {no. 93, 9},
      ISSN = {1083-589X},
   MRCLASS = {60K35 (82B20 82B26)},
  MRNUMBER = {3151749},
MRREVIEWER = {N.\ N.\ Ganikhodjaev},
       DOI = {10.1214/ECP.v18-3163},
       URL = {https://doi.org/10.1214/ECP.v18-3163},
}

@misc{dalimonte_analyticity_2026,
      title={Uniform analyticity of local observables in {FK}-percolation and analyticity of the {I}sing spontaneous magnetisation}, 
      author={Lucas D'Alimonte and Lo{ï}c Gassmann},
      howpublished={arXiv preprint, arXiv:2604.18558},
      year={2026},
      eprint={2604.18558},
      archivePrefix={arXiv},
      primaryClass={math.PR},
      url={https://arxiv.org/abs/2604.18558}, 
}

@misc{ott_weak_2020-1,
      title={Weak mixing and analyticity in Random Cluster and low temperature {I}sing models}, 
      author={Sébastien Ott},
      howpublished={arXiv preprint, arXiv:2003.05879},
      year={2020},
      eprint={2003.05879},
      archivePrefix={arXiv},
      primaryClass={math-ph},
      url={https://arxiv.org/abs/2003.05879}, 
}

@article {ott_potts_2018,
    AUTHOR = {Ott, S\'{e}bastien and Velenik, Yvan},
     TITLE = {Potts models with a defect line},
   JOURNAL = {Comm. Math. Phys.},
  FJOURNAL = {Communications in Mathematical Physics},
    VOLUME = {362},
      YEAR = {2018},
    NUMBER = {1},
     PAGES = {55--106},
      ISSN = {0010-3616,1432-0916},
   MRCLASS = {82D40 (82B20)},
  MRNUMBER = {3833604},
       DOI = {10.1007/s00220-018-3197-6},
       URL = {https://doi.org/10.1007/s00220-018-3197-6},
}

@InProceedings{HDC_lecture_notes,
author="Duminil-Copin, Hugo",
editor="Barlow, Martin T.
and Slade, Gordon",
title="Lectures on the {I}sing and {P}otts Models on the Hypercubic Lattice",
booktitle="Random Graphs, Phase Transitions, and the Gaussian Free Field",
year="2020",
publisher="Springer International Publishing",
address="Cham",
pages="35--161",
isbn="978-3-030-32011-9"
}

@article{2D_Ising_mass,
  title = {Theory of {T}oeplitz Determinants and the Spin Correlations of the Two-Dimensional {I}sing Model. {I}},
  author = {Wu, Tai Tsun},
  journal = {Phys. Rev.},
  volume = {149},
  issue = {1},
  pages = {380--401},
  numpages = {0},
  year = {1966},
  month = {Sep},
  publisher = {American Physical Society},
  doi = {10.1103/PhysRev.149.380},
  url = {https://link.aps.org/doi/10.1103/PhysRev.149.380}
}

@article {Onsager,
    AUTHOR = {Onsager, Lars},
     TITLE = {Crystal statistics. {I}. {A} two-dimensional model with an
              order-disorder transition},
   JOURNAL = {Phys. Rev. (2)},
  FJOURNAL = {Physical Review. Series II},
    VOLUME = {65},
      YEAR = {1944},
     PAGES = {117--149},
      ISSN = {0031-899X,1536-6065},
   MRCLASS = {82.1X},
  MRNUMBER = {10315},
MRREVIEWER = {J.\ S.\ Frame},
}

@article {duminil-copin_discontinuity_2021,
    AUTHOR = {Duminil-Copin, Hugo and Gagnebin, Maxime and Harel, Matan and
              Manolescu, Ioan and Tassion, Vincent},
     TITLE = {Discontinuity of the phase transition for the planar
              random-cluster and {P}otts models with {$q>4$}},
   JOURNAL = {Ann. Sci. \'{E}c. Norm. Sup\'{e}r. (4)},
  FJOURNAL = {Annales Scientifiques de l'\'{E}cole Normale Sup\'{e}rieure.
              Quatri\`eme S\'{e}rie},
    VOLUME = {54},
      YEAR = {2021},
    NUMBER = {6},
     PAGES = {1363--1413},
      ISSN = {0012-9593,1873-2151},
   MRCLASS = {82B20 (82B26)},
  MRNUMBER = {4371394},
       DOI = {10.24033/asens.2485},
       URL = {https://doi.org/10.24033/asens.2485},
}

@article {DCM_scaling_2022,
    AUTHOR = {Duminil-Copin, Hugo and Manolescu, Ioan},
     TITLE = {Planar random-cluster model: scaling relations},
   JOURNAL = {Forum Math. Pi},
  FJOURNAL = {Forum of Mathematics. Pi},
    VOLUME = {10},
      YEAR = {2022},
     PAGES = {Paper No. e23, 83},
      ISSN = {2050-5086},
   MRCLASS = {60K35},
  MRNUMBER = {4515274},
       DOI = {10.1017/fmp.2022.16},
       URL = {https://doi.org/10.1017/fmp.2022.16},
}

@article {dalimonte_entropic_2024,
    AUTHOR = {D'Alimonte, Lucas},
     TITLE = {Entropic repulsion and scaling limit for a finite number of
              non-intersecting subcritical {FK} interfaces},
   JOURNAL = {Electron. J. Probab.},
  FJOURNAL = {Electronic Journal of Probability},
    VOLUME = {29},
      YEAR = {2024},
     PAGES = {Paper No. 68, 53},
      ISSN = {1083-6489},
   MRCLASS = {60K35 (82B20 82B24 82B41 82B43)},
  MRNUMBER = {4741492},
       DOI = {10.1214/24-ejp1127},
       URL = {https://doi.org/10.1214/24-ejp1127},
}

@article {duminil-copin_exponential_2020,
    AUTHOR = {Duminil-Copin, Hugo and Goswami, Subhajit and Raoufi, Aran},
     TITLE = {Exponential decay of truncated correlations for the {I}sing
              model in any dimension for all but the critical temperature},
   JOURNAL = {Comm. Math. Phys.},
  FJOURNAL = {Communications in Mathematical Physics},
    VOLUME = {374},
      YEAR = {2020},
    NUMBER = {2},
     PAGES = {891--921},
      ISSN = {0010-3616,1432-0916},
   MRCLASS = {82B20 (60K35)},
  MRNUMBER = {4072233},
MRREVIEWER = {Jianping\ Jiang},
       DOI = {10.1007/s00220-019-03633-y},
       URL = {https://doi.org/10.1007/s00220-019-03633-y},
}

@article {harel_spinka,
    AUTHOR = {Harel, Matan and Spinka, Yinon},
     TITLE = {Finitary codings for the random-cluster model and other
              infinite-range monotone models},
   JOURNAL = {Electron. J. Probab.},
  FJOURNAL = {Electronic Journal of Probability},
    VOLUME = {27},
      YEAR = {2022},
     PAGES = {Paper No. 51, 32},
      ISSN = {1083-6489},
   MRCLASS = {60K35 (60G60)},
  MRNUMBER = {4416675},
MRREVIEWER = {Bruno\ Schapira},
       DOI = {10.1214/22-ejp778},
       URL = {https://doi.org/10.1214/22-ejp778},
}

@article {Ott_Ising,
    AUTHOR = {Ott, S\'{e}bastien},
     TITLE = {Weak mixing and analyticity of the pressure in the {I}sing
              model},
   JOURNAL = {Comm. Math. Phys.},
  FJOURNAL = {Communications in Mathematical Physics},
    VOLUME = {377},
      YEAR = {2020},
    NUMBER = {1},
     PAGES = {675--696},
      ISSN = {0010-3616,1432-0916},
   MRCLASS = {82B20 (82D45)},
  MRNUMBER = {4107941},
MRREVIEWER = {Eric\ O.\ Endo},
       DOI = {10.1007/s00220-019-03606-1},
       URL = {https://doi.org/10.1007/s00220-019-03606-1},
}

\clearpage
\clearpage

\setcounter{section}{0}
\renewcommand{\thesection}{\Alph{section}}
\renewcommand{\theHsection}{appendix.\Alph{section}}

\setcounter{counterEnvMain}{0}
\setcounter{counterEnvDefault}{0}

\renewcommand{\thecounterEnvMain}{\thesection.\arabic{counterEnvMain}}
\renewcommand{\thecounterEnvDefault}{\thesection.\arabic{counterEnvDefault}}

\providecommand{\theHcounterEnvMain}{}
\providecommand{\theHcounterEnvDefault}{}

\renewcommand{\theHcounterEnvMain}{appendix.\Alph{section}.\arabic{counterEnvMain}}
\renewcommand{\theHcounterEnvDefault}{appendix.\Alph{section}.\arabic{counterEnvDefault}}

\section{Appendix}
\label{sec:appendix}

\subsection{Real continuity of the mass in a general direction}

We start by proving the continuity of $\frakm$ and $\fraky$, needed to establish equation \eqref{thm:corr_length_eigenvalue_Ruelle}.  
An elementary proof of the continuity of the rate of decay of the probability of connection to distance $n$ can be found in~\cite[Theorem 6.14]{grimmett}. However, the argument does not adapt directly to the case of the directional mass $\frakm$.
We thus present an easy argument using Theorem \ref{thm:clust_exp_UDE}.

\begin{lemma}\label{lem:mass_continuous}
The functions $\frakm(p,x)$ and $\fraky(p,x)$ are continuous on $(0,p_c) \times \bbR^d$.
\end{lemma}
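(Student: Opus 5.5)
We prove continuity of $\frakm$ first; continuity of $\fraky$ then follows from convex duality. The argument has three steps: an a priori bound $\frakm(p,x)\ge c|x|$, a sub/superadditive squeeze in the direction variable, and a Lipschitz-type bound in $p$ coming from the UD-event machinery.

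\textbf{Step 1: behaviour in $x$ at fixed $p$.} For fixed $p<p_c$, $\frakm(p,\cdot)$ is the limit of the subadditive sequence $-\log\phi^0_p[0\lra\lfloor nx\rfloor]$. It is therefore a norm on $\R^d$: it is homogeneous and convex, and by sharpness~\cite{duminil-copin_sharp_2019} it is bounded below by $c|x|$. Being finite and convex, $\frakm(p,\cdot)$ is continuous in $x$. Hence it suffices to show that $p\mapsto\frakm(p,x)$ is continuous, locally uniformly in $x\in\bbS^{d-1}$.

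\textbf{Step 2: uniform control in $p$.} Monotonicity in $p$, obtained by comparison of FK measures for $q\ge1$, gives that $\frakm(\cdot,x)$ is nonincreasing. We then need to exclude jumps. For this we compare $\phi^0_{p}$ and $\phi^0_{p'}$ on connection events through UD-events. Fix a finite self-avoiding open path $\gamma$ from $0$ to $\lfloor nx\rfloor$ together with the closed status of its outer edge boundary. This is the event $\mathsf{Ins}(\gamma)\cap\mathsf{Out}(\gamma)$, whose pattern imposes free boundary conditions outside, so it is a UD-event by Remark~\ref{rem:ins_out_UDE}. By Theorem~\ref{thm:clust_exp_UDE}(2), applied for real $u$ with $|u|<\eps$,
\[
\phi^0_{p+u}[\mathsf{Ins}(\gamma)\cap\mathsf{Out}(\gamma)]\ge \e^{-c_\eps|\mathsf{Supp}|}\,\phi^0_{p}[\mathsf{Ins}(\gamma)\cap\mathsf{Out}(\gamma)].
\]
The support size is at most $2d|\gamma|$. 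Summing over clusters $\frakc\ni 0,\lfloor nx\rfloor$, these events are disjoint and their union is $\{0\lra\lfloor nx\rfloor\}$, so we decompose according to the cluster $\calC_0$ itself, with $\mathsf{Ins}(\calC_0)\cap\mathsf{Out}(\calC_0)$. The exponential volume tail $\phi^0_p[|\calC_0|\ge k]\le C\e^{-ck}$~\cite{duminil-copin_sharp_2019}, used exactly as in the proof of~\eqref{equ:complex_summability_potential}, lets us restrict to clusters with $|\frakc|\le Kn$ at negligible cost, since $K$ large makes that part decay faster than $\e^{-2\frakm n}$. This gives
\[
\phi^0_{p\pm u}[0\lra\lfloor nx\rfloor]\ \ge\ \e^{-c_\eps' Kn}\,\phi^0_p[0\lra\lfloor nx\rfloor]-\e^{-c'' n},
\]
and symmetrically with the roles of $p$ and $p\pm u$ exchanged. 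Here $K$ and $c''$ can be taken uniform for $p'$ in a compact subinterval of $(0,p_c)$, since volume tails are monotone in $p$. Taking logs and letting $n\to\infty$ yields $|\frakm(p+u,x)-\frakm(p,x)|\le c_\eps K'|x|$ with $c_\eps\to0$. Together with Step 1 this gives joint continuity of $\frakm$.

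\textbf{Step 3: transfer to $\fraky$.} The support function of the set $\calU_p=\{\frakm(p,\cdot)\le1\}$ is $\frakg(p,t)=\sup\{\langle t,x\rangle:\frakm(p,x)\le1\}$. Uniform convergence of the norms $\frakm(p',\cdot)\to\frakm(p,\cdot)$ on the sphere, together with the uniform bounds $c|x|\le\frakm\le C|x|$, implies Hausdorff convergence of the sets $\calU_{p'}$. This gives joint continuity of $\frakg$, which is positive, and hence of $\fraky=1/\frakg$.

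The main obstacle is Step 2. One must make sure that the cluster decomposition yields UD-events for the base graph $\Z^d$, and that the volume truncation gives a loss that is linear in $n$ with a constant vanishing as $\eps\to0$. The most delicate point is handling the subtraction when the error term is comparable to the main term; we would handle this by choosing $K$ so large that the truncated tail is at most $\tfrac12\e^{-(\frakm+1)n}$.
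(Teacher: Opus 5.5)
Your argument is correct and is essentially the paper's: you decompose $\{0\lra\lfloor nx\rfloor\}$ according to $\calC_0$, discard $|\calC_0|\geq Kn$ via the exponential volume tail, and compare each $\phi^0[\calC_0=C]$ at $p$ and at $p\pm u$ with the two-sided bound of Theorem~\ref{thm:clust_exp_UDE}, getting $|\frakm(p\pm u,x)-\frakm(p,x)|\leq Kc_\eps$. The paper instead uses the upper bound $\phi^0_{p+\eps}[0\lra\lfloor nx\rfloor]\leq \e^{c_\eps Kn}\phi^0_p[0\lra\lfloor nx\rfloor]+\e^{-2\frakm(p,x)n}$ together with monotonicity in $p$, which avoids controlling a subtraction, whereas you make both one-sided limits, the uniformity in $x$ and the passage to $\fraky$ explicit.

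One fix is needed. The events must be $\{\calC_0=C\}$, which also closes the edges joining two vertices of $C$ that are not in $E(C)$. Your path events and $\mathsf{Ins}(\frakc)\cap\mathsf{Out}(\frakc)$ leave those edges free, so they are not disjoint. Moreover, when $\frakc$ is not an induced subgraph they fail the decoupling property of UD-events, because the number of components of $H\setminus\mathsf{Supp}_A$ then depends on those free internal edges.
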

\begin{proof}
The continuity of $\fraky$ follows directly from the continuity of $\frakm$ and the definition of~$\fraky$. 
The continuity of $\frakm$ in the space variable $x$ is a classical, as $\frakm$ is an equivalent norm on $\R^d$.
 Also notice that for all $p$, $\frakm(p,0)=0$. 
 So, it only remains to prove the continuity of $\frakm(\cdot, x)$ for a fixed vector $x \in \bbR^d \setminus \{0\}$.
 Let $p\in (0,p_c)$, and take $\eps > 0$ such that $p+\eps < p_c$.
For any $n \in \Z_{\geq 1}$ and for $K \in \Z_{\geq 1}$ to be specified latter, we write, 
\begin{equation}
\phi^0_{p+\eps}[ 0 \longleftrightarrow \lfloor nx \rfloor ] = \sum_{\substack {C \ni 0, \lfloor nx\rfloor\\ C \text{ is connected}  \\ |C| <Kn}} \phi^0_{p+\eps}[\calC_0 = C] + \phi_{p+\eps}[0 \longleftrightarrow \lfloor nx \rfloor, | \calC_0 | \geq Kn ]. 
\end{equation}
A rough upper bound on the exponential decay of the volume of the cluster of 0 in the subcritical phase implies that one may choose $K \geq 0$, depending only on $p$ and $x$, such that for all $\eps > 0$ sufficiently small
\begin{equation}
\phi^0_{p+\eps}[0 \longleftrightarrow \lfloor nx \rfloor, | \calC_0 | \geq Kn ] \leq \exp( -2\frakm(p, x) n).
\end{equation}
On the other hand, since $\{\calC_0 = C\}$ is a UD-event, Theorem \ref{thm:clust_exp_UDE} implies the existence\footnote{One can alternatively use~\cite[Theorem 1.2]{dalimonte_analyticity_2026} instead of Theorem~\ref{thm:clust_exp_UDE} to get the same result without using the notion of UD-event.} of a constant $c_\eps > 0$ tending to 0 when $\eps$ tends to 0 such that
\begin{align*}
\sum_{\substack {C \ni 0, \lfloor nx\rfloor\\ C \text{ is connected}  \\ |C| <Kn}} \phi^0_{p+\eps}[\calC_0 = C] 
&\leq \exp(+c_\eps K n) \sum_{\substack {C \ni 0, \lfloor nx\rfloor\\ C \text{ is connected}  \\ |C| <Kn}} \phi^0_{p}[\calC_0 = C] \\
&\leq \exp(+c_\eps K n) \phi_p[0 \leftrightarrow \lfloor nx \rfloor].
\end{align*}
Since the mass $\frakm$ is clearly decreasing in $p$, we obtain
\begin{equation}
0 \leq \frakm(p, x) - \frakm(p+\eps, x) \leq K c_\eps,
\end{equation}
which concludes on the continuity of $\frakm(\cdot, x)$ on $(0, p_c)$. 
\end{proof}
\subsection{A uniform mixing estimate}
\label{sec:appendix_mixing}

The proof presented in this paper makes several uses of mixing estimates for subcritical percolation. Either directly or by invoking results of~\cite{campanino_ornstein-zernike_2003} (especially for the results invoked in Remark \ref{remq:real_mass_gap}). The following lemma, is a generalised version of the classical ratio mixing estimate. Combining this lemma with exponential decay of connection probabilities in FK-percolation is enough to obtain all the mixing estimates used in the present paper and in the papers on which it relies.

Similar statements can be found in the literature \cite{aoun_two-point_2022, DCM_scaling_2022}, and follow from the seminal work of Alexander~\cite{alexander_weak_1998}. But for the purpose of this article, we state a ratio weak mixing for $\phi_{G,p}$ that is uniform over $G$. The proof presented below is a direct adaptation of \cite[Corollary 3.2]{aoun_two-point_2022}. The point here is simply that removing edges from $\bbZ^d$ by restricting the model to a subgraph $G$ may only decrease the probability of connections. And since mixing is controlled by these probabilities, restricting to subgraph do not alter the mixing bounds. 

\begin{lemma}[Ratio weak mixing in subgraphs]\label{lem:ratio_mixing_uniform_G}
Take $p < p_c$, a subgraph $G$ of $\bbZ^d$ and two events $A$ and $B$ with positive probability and whose supports are finite and disjoint. Let $S$ and $S'$ denote the support of $A$ and $B$, respectively. Then, the following bound holds whenever the right-hand side is less than $1$
\begin{equation}
\Big\vert \frac{\phi^0_{G,p}[A \,|\, B]}{\phi^0_{G,p}[A]} - 1 \Big\vert \leq 4 \phi^1_{\bbZ^d \setminus \{S \cup S'\}}[\partial S \leftrightarrow \partial S'].
\end{equation}
\end{lemma}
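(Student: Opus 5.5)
We need to prove ratio weak mixing in a subgraph $G$: for events $A,B$ with finite disjoint supports $S,S'$,
\[
\Big|\frac{\phi^0_{G,p}[A\mid B]}{\phi^0_{G,p}[A]}-1\Big|\le 4\,\phi^1_{\Z^d\setminus\{S\cup S'\}}[\partial S\lra\partial S'].
\]
Here we interpret $\phi^0_{G,p}$ as the infinite-volume limit of $\phi^0_{G\cap\Lambda_N,p}$. The plan is to adapt the argument of \cite[Corollary 3.2]{aoun_two-point_2022}, which goes back to Alexander. The idea is to couple the configuration outside $S\cup S'$ under the different conditionings through an exploration of the clusters touching $\partial S$. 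This exploration stops before reaching $\partial S'$ unless a connection event occurs.

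First we reduce to a finite volume $H=G\cap\Lambda_N$ and pass to the limit at the end, since both sides are stable in the limit: the right-hand side does not depend on $N$. We then rewrite the ratio as
\[
\frac{\phi^0_{H}[A\cap B]\,\phi^0_H[\,\cdot\,]}{\phi^0_H[A]\,\phi^0_H[B]}.
\]
Conditioning on the configuration $\om_S$ in $S$ and $\om_{S'}$ in $S'$, the spatial Markov property reduces the problem to comparing
\[
\phi^{\xi,\xi'}_{H\setminus(S\cup S')}[\,\cdot\,]
\]
for boundary conditions induced on $\partial S$ by $\xi$ and on $\partial S'$ by $\xi'$. It therefore suffices to show that, for any two boundary conditions $\xi_1,\xi_2$ on $S$ and any $\xi'$ on $S'$, the ratio
\[
\frac{Z^{\xi_1,\xi'}\,Z^{\xi_2,\xi'_2}}{Z^{\xi_1,\xi'_2}\,Z^{\xi_2,\xi'}}
\]
of partition functions on $H\setminus(S\cup S')$ lies within $1\pm 4\eta$, where $\eta$ denotes the connection probability on the right-hand side. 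Averaging over $\om_S,\om_{S'}$ then yields the claim.

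The key step is the following. Write each ratio $Z^{\xi,\xi'}/Z^{\xi,\xi'_2}$ as an expectation under $\phi^{\xi,\xi'_2}_{H\setminus(S\cup S')}$ of $q^{\Delta k}$, where $\Delta k$ is the change in the number of clusters induced by the boundary condition on $S'$. This change depends only on which vertices of $\partial S'$ are connected to each other, or to $\partial S$, in $H\setminus(S\cup S')$. On the event that $\partial S\nlra\partial S'$, explore the clusters of $\partial S'$. Conditionally on this exploration, the dependence on the $S$-boundary condition factorizes, so the factor $q^{\Delta k}$ is the same for $\xi_1$ and $\xi_2$. The discrepancy is therefore carried only by the event $\{\partial S\lra\partial S'\}$.

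Its probability under any boundary condition restricted to $H$ is bounded, by monotonicity, by the probability under the wired measure on $\Z^d\setminus(S\cup S')$. Two features make this work. Closing the edges of $\Z^d\setminus G$ only decreases connection probabilities, which is why the bound is uniform in $G$. Also, $q^{\Delta k}$ is bounded both above and below, so we may compare the measures $\phi^{\xi_1,\cdot}$ and $\phi^{\xi_2,\cdot}$ by a monotone coupling, following Alexander's disagreement-percolation argument.

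The main obstacle is controlling the ratio on the bad event, where the weight $q^{\Delta k}$ is not uniformly bounded. I would follow \cite{aoun_two-point_2022}: use the monotone coupling of the wired and free measures on $H\setminus(S\cup S')$, and bound the first-order discrepancy by $2\eta$ for each of the two factors. Under the assumption that the right-hand side is less than $1$, an elementary estimate of the form
\[
\frac{1+x}{1-x}-1\le 4x
\]
then gives the constant $4$. Letting $N\to\infty$ concludes the proof.
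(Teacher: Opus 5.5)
Your reduction to the cross-ratio $\frac{Z^{\xi_1,\xi'}Z^{\xi_2,\xi_2'}}{Z^{\xi_1,\xi_2'}Z^{\xi_2,\xi'}}$ is valid: $\phi[A\mid B]/\phi[A]$ is a ratio of two sums whose termwise ratios are exactly such cross-ratios. The gap is in what follows, which does not give a bound with a constant independent of $|S|$ and $|S'|$. That uniformity is the whole content of the lemma, since the paper applies it to supports of size of order $N^d$.

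Two things go wrong.
\begin{itemize}
\item \emph{The good event.} On $\{\partial S\nlra\partial S'\}$ the weight $q^{\Delta k}$ is indeed a function of the connectivity pattern of $\partial S'$ alone. However, the \emph{law} of that pattern differs under $\phi^{\xi_1,\xi_2'}$ and $\phi^{\xi_2,\xi_2'}$. Restricted to the good event, $Z^{\xi,\xi'}=\sum_C W^{\xi'}(C)\,Z^{\xi}_{R(C)}$, where $C$ runs over explored clusters of $\partial S'$ and $R(C)$ is the unexplored region. The $\xi$-dependence sits in $Z^{\xi}_{R(C)}$, which varies with $C$, so nothing cancels. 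You need a monotone coupling ($\xi_1\ge\xi_2$) in which the cluster of $\partial S$ is explored in the \emph{larger} configuration; exploring the clusters of $\partial S'$ is the wrong side.
\item \emph{The bad event (the serious problem).} Even with that coupling, your scheme controls the bad-event contribution only through $q^{-|S'|}\le q^{\Delta k}\le q^{|S'|}$. This gives an error of order $q^{O(|S'|)}\eta$. You name this as the main obstacle, but ``bound the first-order discrepancy by $2\eta$ for each factor'' is not an argument. Moreover, $x=2\eta$ in $\frac{1+x}{1-x}-1\le 4x$ would give $8\eta$, not $4\eta$.
\end{itemize}

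The missing idea is to evaluate the bad event under a measure that makes the error \emph{multiplicative}. The paper does this on the configuration side. For $\rho\ge\sigma$ on $S'$, it couples $\om^+\ge\om^-$ so that they agree on $S$ unless $\partial S\lra\partial S'$ in $\om^+$. It then bounds $\Phi(\eta|\rho)-\Phi(\eta|\sigma)\le\bbP[\partial S\lra\partial S'\mid\om^+_{|S}=\eta]\,\Phi(\eta|\rho)\le\eps\,\Phi(\eta|\rho)$, using that the conditional connection probability is still dominated by the wired measure on $\Z^d\setminus(S\cup S')$. This does not give the reverse inequality, so the paper adds a separate lower bound: FKG for the open part of $\eta$, and an exploration of the cluster of $S'$ plus domain monotonicity for the closed part. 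Unordered $\eta'$ are handled by comparing through the all-closed configuration, which gives $(1-\eps)^{\pm 2}$ and hence $4\eps$.

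Your framework can be completed along the same lines. Set $R(\xi)=Z^{\xi,\xi'}/Z^{\xi,\xi_2'}$.
\begin{itemize}
\item The reweighting identity $q^{\Delta k}\,\mathrm{d}\phi^{\xi,\xi_2'}=R(\xi)\,\mathrm{d}\phi^{\xi,\xi'}$ gives $\mathbb{E}^{\xi_1,\xi_2'}[q^{\Delta k}\mathds{1}_{\mathrm{bad}}]=R(\xi_1)\,\phi^{\xi_1,\xi'}[\mathrm{bad}]\le\eps\,R(\xi_1)$.
\item With the coupling above, this yields $R(\xi_2)\ge(1-\eps)R(\xi_1)$ whenever $\xi_1\ge\xi_2$.
\item Since $\xi',\xi_2'$ need not be ordered, exchanging them gives the reverse bound for free.
\item Comparing through $\xi=0$ then puts every cross-ratio in $[(1-\eps)^2,(1-\eps)^{-2}]$, which gives $4\eps$.
\end{itemize}
As written, though, the proposal stops exactly where the proof has to happen.
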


\begin{proof}
We essentially follow the proof of \cite[Corollary 3.2]{aoun_two-point_2022} the main difference being that we work on the subgraph $G$ instead of $\bbZ^d$. We fix $p < p_c$ and omit the parameter $p$ from the FK measures to shorten notation. Let $\eps(S,S') \coloneqq \phi^1_{\bbZ^d \setminus \{ S \cup  S'\}}[\partial S \leftrightarrow \partial S']$ and suppose that $4 \eps(S,S') \leq 1$.
It is actually sufficient to prove that, for any $\eta \in \{0,1\}^S$ and $\eta' \in \{0,1\}^{S'}$,
\begin{equation}\label{eq:ratio_mixing_configuration}
\Big\vert \frac{\phi^0_G[\om_{|S} = \eta \,|\, \om_{|S'} = \eta']}{\phi^0_G[\om_{|S} = \eta]} - 1 \Big\vert \leq 4 \eps(S,S').
\end{equation}
Indeed, summing this bound over the possible configurations on $S$ and $S'$ realising $A$ and~$B$ respectively yields the required bound. The bound above will follow from the following.

\medskip

\textbf{Claim:} For any $\eta \in \{0,1\}^S$ and any $\rho, \sigma \in \{0,1\}^{S'}$ such that\footnote{$\rho \geq \sigma$ means that $\forall s \in S',\, \rho_s \geq \sigma_s$.} $\rho \geq \sigma$, 
\begin{align*}
(1-\eps(S,S')) \leq \frac{\phi^0_G[\om_{|S} = \eta \,|\, \om_{|S'} = \rho]}{\phi^0_G[\om_{|S} = \eta \,|\, \om_{|S'} = \sigma]} \leq (1-\eps(S,S'))^{-1}.
\end{align*}
Let us first prove this claim. By the FKG-lattice property of FK-percolation, the measure $\phi^0_G[\cdot\,|\, \om_{|S'} = \rho]$ stochastically dominates the measure $\phi^0_G[\cdot\,|\, \om_{|S'} = \sigma]$ and it is classical that one may find a coupling $\bbP$ between $\om^+ \sim \phi^0_G[\cdot\,|\, \om_{|S'} = \rho]$ and $\omega^- \sim \phi^0_G[\cdot\,|\, \om_{|S'} = \sigma]$ such that $\om^+ \geq \om^-$ almost surely. The coupling techniques described in \cite[Section 2.3]{DCM_scaling_2022} also show that $\bbP$ can be chosen\footnote{When $S$ and $S'$ are not connected in $\om^+$, one can explore the connected component of $S'$ in $\om^+$ to obtain a separating dual surface between $S'$ and $S$. Since $\om^+ \geq \om$, the same dual surface is also present in $\om$. Once this surface is revealed in $\om^+$ and $\om^-$, the rest of the exploration of $\om^+$ and $\om^-$ can be carried out while requiring that the configurations $\om^+$ and $\om^-$ are equal in the volume delimited by the surface.}
in such a way that $\bbP$-a.s., if $\partial S$ and $\partial S'$ are not connected in $\om^+$, then the two configurations $\om^+$ and $\om^-$ coincide on $S$.

By taking such a coupling $\bbP$, we get 
\begin{align*}
\phi^0_G[\om_{|S} = \eta \,|\, \om_{|S'} = \rho] - \phi^0_G[\om_{|S} = \eta \,|\, \om_{|S'} = \sigma]
&\leq \bbP[\omega^+_{|S} = \eta,\, \omega^-_{|S} \neq \eta]\\
&\leq \bbP[\omega^+_{|S} = \eta,\, \partial S \overset{\om^+}{\longleftrightarrow} \partial S']\\
&\leq \bbP[\partial S \overset{\om^+}{\longleftrightarrow} \partial S' \,|\, \omega^+_{|S} = \eta] \phi_G^0[\omega_{|S} = \eta \,|\, \omega_{|S'} = \rho]\\
&\leq \eps(S,S') \phi_G^0[\omega_{|S} = \eta \,|\, \omega_{|S'} = \rho],
\end{align*}
where the last inequality follows from the monotonic properties of the model and the fact that $G$ is a subgraph of $\bbZ^d$. Dividing by $\phi_G^0[\omega_{|S} = \eta \,|\, \omega_{|S'} = \rho]$ provides the upper bound of the \textbf{claim}.

To prove the lower bound, we write $S_{\star} = \{e \in S: \, \eta_e = \star\}$ for $\star \in \{0,1\}$. Decomposing the event $\om_{|S} = \eta$ and using the FKG-lattice property yields
\begin{align}\label{equ:bound_decomposition_eta}
\frac{\phi^0_G[\om_{|S} = \eta \,|\, \om_{|S'} = \rho]}{\phi^0_G[\om_{|S} = \eta \,|\, \om_{|S'} = \sigma]}
&= \frac{\phi^0_G[\om_{|S_1} = 1 \,|\, \om_{|S'} = \rho] \phi^0_G[\om_{|S_0} = 0 \,|\, \om_{|S_1} = 1,\, \om_{|S'} = \rho] }{\phi^0_G[\om_{|S_1} = 1 \,|\, \om_{|S'} = \sigma] \phi^0_G[\om_{|S_0} = 0 \,|\, \om_{|S_1} = 1,\, \om_{|S'} = \sigma]} \notag\\
&\geq \frac{\phi^0_G[\om_{|S_0} = 0 \,|\, \om_{|S_1} = 1,\, \om_{|S'} = \rho] }{ \phi^0_G[\om_{|S_0} = 0 \,|\, \om_{|S_1} = 1,\, \om_{|S'} = \sigma]}.
\end{align}
For any configuration $\om$ on $G$, we denote by $\Omega(\om)$ the connected component of $S'$ in $\om$. Observing what are the possible realisations of $\Omega(\om)$ when $\partial S \overset{\om}{\centernot\longleftrightarrow} \partial S'$, we get
\begin{align*}
\phi^0_G[\om_{|S_0} = 0 \,|\, \om_{|S_1} = 1,\, \om_{|S'} = \rho]
&\geq \phi^0_G[\om_{|S_0} = 0,\, \partial S \overset{\om}{\centernot\longleftrightarrow} \partial S' \,|\, \om_{|S_1} = 1,\, \om_{|S'} = \rho]\\
&\geq \sum\limits_{\Omega} \phi^0_{G \setminus \Omega}[\om_{|S_0} = 0 \,|\, \omega_{S_1} = 1] \phi_G[\Omega(\omega) = \Omega \,|\, \om_{|S_1} = 1,\, \om_{|S'} = \rho],
\end{align*}
where the sum runs over the subgraphs $\Omega$ of $G$ such that $S' \subset \Omega$ and $\Omega \cap S = \emptyset$. Note that, for any possible choice of $\Omega$, the monotonic properties of FK-percolation imply that 
\begin{align*}
\phi^0_{G \setminus \Omega}[\om_{|S_0} = 0 \,|\, \omega_{S_1} = 1] 
\geq \phi^0_G[\om_{|S_0} = 0 \,|\, \om_{|S_1} = 1,\, \om_{|S'} = \sigma].
\end{align*}
Thus,
\begin{align*}
\frac{\phi^0_G[\om_{|S_0} = 0 \,|\, \om_{|S_1} = 1,\, \om_{|S'} = \rho]}{\phi^0_G[\om_{|S_0} = 0 \,|\, \om_{|S_1} = 1,\, \om_{|S'} = \sigma]}
&\geq \phi^0_G[ \partial S \overset{\om}{\centernot\longleftrightarrow} \partial S' \,|\, \om_{|S_1} = 1,\, \om_{|S'} = \rho]\\
&\geq 1 - \eps(S,S').
\end{align*}
Inserting this in \eqref{equ:bound_decomposition_eta}, proves the lower bound of the \textbf{claim}.

\medskip

We can now prove the bound \eqref{eq:ratio_mixing_configuration}. Take $\eta$ and $\eta'$ as in \eqref{eq:ratio_mixing_configuration}. Let us rewrite the ratio
\begin{align*}
\frac{\phi^0_G[\om_{|S} = \eta]}{\phi^0_G[\om_{|S} = \eta \,|\, \om_{|S'} = \eta']}
= \sum\limits_{\tau \in \{0,1\}^{S'}} \phi^0_G[\omega_{|S'} = \tau] \frac{\Phi(\eta|\tau)}{\Phi(\eta|0)} \frac{\Phi(\eta|0)}{\Phi(\eta|\eta')},
\end{align*}
where we used the shorthand $\Phi(\eta|\star) = \phi^0_G(\om_{|S} = \eta \,|\, \om_{S'} = \star)$. Applying the \textbf{claim} to the two fractions of $\Phi$-terms on the right-hand side gives
\begin{align*}
(1 - \eps(S,S'))^{2} \leq \frac{\phi^0_G[\om_{|S} = \eta]}{\phi^0_G[\om_{|S} = \eta \,|\, \om_{|S'} = \eta']} \leq (1 - \eps(S,S'))^{-2}.
\end{align*}
Since $4\eps(S,S') \leq 1$, the upper bound above is actually smaller than $1 + 4 \eps(S,S')$. The bound \eqref{eq:ratio_mixing_configuration} follows directly and this concludes the proof.
\end{proof}

\subsection{Analyticity and polar duality}
\label{sec:legendre}

In order to transmit the analyticity of the support function $\frakg$ to its dual $\frakm$, we use the following lemma.

\begin{lemma} \label{lem:dual_analytic}
Let $I$ be an interval of $\R$ and let $(\calK_p)_{p \in I}$ be a family of convex bodies of $\R^d$ containing 0 in their interior such that for every $p \in I$, $\partial\calK_p$ is strictly convex, analytic and has strictly positive Gaussian curvature. Call $\fraks(p, \cdot)$ and $\frakg(p, \cdot)$ the support function and gauge function of $\calK_p$, respectively. 
Then the following are equivalent:
\begin{itemize}
\item $(p, x) \in I\times\R^{d}\setminus\{0\} \mapsto \fraks(p,x)$ is jointly analytic 
\item $(p, x) \in I\times\R^{d}\setminus\{0\} \mapsto \frakg(p,x)$ is jointly analytic 
\end{itemize}
\end{lemma}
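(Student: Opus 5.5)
By symmetry of polar duality (the polar body $\calK_p^\circ$ has gauge $\fraks(p,\cdot)$ and support $\frakg(p,\cdot)$, and inherits analyticity, strict convexity and positive curvature of the boundary from $\calK_p$), it suffices to prove one implication. We assume $\fraks$ is jointly analytic and deduce the joint analyticity of $\frakg$. The plan is to express $\frakg$ through the Gauss map of $\calK_p$ and apply the real-analytic implicit/inverse function theorem.

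\textbf{Step 1: the Gauss-map inverse.} For fixed $p$, the boundary point of $\calK_p$ with outer unit normal $\mathbf{n}$ is $\nabla_x \fraks(p,\mathbf{n})$. Indeed, $\fraks(p,\cdot)$ is differentiable away from $0$ because $\partial\calK_p$ is strictly convex, and its gradient is the unique maximiser of $\langle \mathbf{n},\cdot\rangle$ on $\calK_p$. Consider the map
\[
\Phi(p,\mathbf{n}) \coloneqq \big(p,\nabla_x\fraks(p,\mathbf{n})\big), \qquad (p,\mathbf{n})\in I\times \bbS^{d-1}.
\]
By hypothesis $\Phi$ is jointly analytic, and for each $p$ its second component is a bijection from $\bbS^{d-1}$ onto $\partial\calK_p$. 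The restriction of $\nabla_x\fraks(p,\cdot)$ to $\bbS^{d-1}$ is the inverse Gauss map. Its differential, namely the restriction of $D^2_x\fraks(p,\mathbf{n})$ to $\mathbf{n}^\perp$, has eigenvalues equal to the principal radii of curvature. These are finite and strictly positive because the Gaussian curvature is positive. Hence $D\Phi$ is invertible, and the analytic inverse function theorem applies.

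\textbf{Step 2: from boundary points to the gauge.} The radial projection $\pi(p,\mathbf{n}) \coloneqq \nabla_x\fraks(p,\mathbf{n})/|\nabla_x\fraks(p,\mathbf{n})|\in\bbS^{d-1}$ is analytic, since $0\in\mathrm{int}\,\calK_p$. For each $p$ it is a bijection of $\bbS^{d-1}$, because $\calK_p$ is star-shaped with respect to $0$ and its boundary meets each ray exactly once. Its differential is invertible: it is the composition of the invertible differential from Step 1 with radial projection restricted to the tangent space of $\partial\calK_p$. That restriction is an isomorphism because the tangent plane never contains the ray through $0$, which holds since $\langle \mathbf{n},x\rangle=\fraks(p,\mathbf{n})>0$ at the point $x$ with normal $\mathbf{n}$. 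By the analytic inverse function theorem applied to $(p,\mathbf{n})\mapsto(p,\pi(p,\mathbf{n}))$, we get an analytic inverse $(p,u)\mapsto \mathbf{n}(p,u)$ locally. Global injectivity for each $p$ makes these local inverses patch into a single analytic map on $I\times\bbS^{d-1}$.

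Then
\[
\frakg(p,u)=\frac{1}{|\nabla_x\fraks(p,\mathbf{n}(p,u))|}, \qquad u\in\bbS^{d-1},
\]
is analytic. Extending by $1$-homogeneity, $\frakg(p,x)=|x|\,\frakg(p,x/|x|)$ gives joint analyticity on $I\times\R^d\setminus\{0\}$.

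\textbf{Alternative formulation.} One can instead define $\frakg$ implicitly as the solution $r$ of $F(p,x,r)=1$, with $F(p,x,r)=\tilde\frakg(p,x/r)$. The Gauss-map route above avoids the circularity of needing $\tilde\frakg$ in advance, so I would use it.

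\textbf{Main obstacle.} The one step requiring care is the nondegeneracy in Step 1: identifying the restricted Hessian of $\fraks$ with the inverse of the Weingarten map, so that positive curvature gives invertibility. This is classical convex geometry (the reverse Weingarten map), and I would cite~\cite{convexity} for it. A secondary point is that the inverse function theorem is only local in $p$. This is harmless because analyticity is a local property, and uniqueness of inverses lets the local pieces agree.
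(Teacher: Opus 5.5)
Your argument is correct, but it proves directly the implication that the paper obtains by polarity, and it uses a different nondegenerate map.

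The paper assumes $\frakg$ analytic and writes $\fraks(p,x)=\langle x,y(p,x)\rangle$. It gets the maximiser from the implicit function theorem applied to the Lagrange system $\nabla_y\frakg(p,y)=\lambda x$, $\frakg(p,y)=1$, in the unknowns $(y,\lambda)\in\R^d\times\R$. Invertibility of the resulting bordered Hessian reduces to positive definiteness of $D^2_{yy}\frakg(p,y)$ on the tangent space of $\{\frakg(p,\cdot)=1\}$, which is positive curvature read off the second fundamental form. The converse is delegated to the involution.

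You instead assume $\fraks$ analytic and parametrise $\partial\calK_p$ by the inverse Gauss map $\nabla_x\fraks(p,\cdot)$. You then compose with radial projection and invert on the fixed manifold $I\times\bbS^{d-1}$, which gives the explicit formula $\frakg(p,u)=1/|\nabla_x\fraks(p,\mathbf n(p,u))|$. Your route costs a convex-geometric fact about $D^2_x\fraks|_{\mathbf n^\perp}$ and a global patching step. In return, the patching makes explicit something the paper leaves tacit: the local implicit solution is the actual maximiser, not the other critical point (the minimiser, with $\lambda<0$).

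The two proofs are dual, and each one's appeal to polarity silently uses the other's curvature input. The paper's converse needs $\partial\calK_p^\circ=\{\fraks(p,\cdot)=1\}$ to have positive curvature, which is exactly the nondegeneracy of $D^2_x\fraks$ that you use. You are also more careful than the paper in stating that the polar body inherits the hypotheses.

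Two small corrections:
\begin{itemize}
\item In Step 1, $\Phi$ maps the $d$-dimensional $I\times\bbS^{d-1}$ into the $(d+1)$-dimensional $I\times\R^d$. So $D\Phi$ is injective, not invertible, and the inverse function theorem does not apply to $\Phi$ itself. It only applies after composing with radial projection, as you correctly do in Step 2, so Step 1 should conclude only that $\Phi$ is an analytic immersion.
\item You attribute the invertibility of $D^2_x\fraks(p,\mathbf n)|_{\mathbf n^\perp}$ to positive Gaussian curvature, but the wrong half of the hypothesis does the work. Invertibility needs the radii of curvature to be nonzero, i.e.\ the principal curvatures to be finite. This follows from $\partial\calK_p$ being $C^2$, or directly from differentiating $\nu_p(\nabla_x\fraks(p,\mathbf n))=\mathbf n$, where $\nu_p$ is the Gauss map. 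Positive curvature is what makes the radii finite, and in your direction that is already implied by $\fraks(p,\cdot)$ being twice differentiable. This is why positive curvature is the decisive hypothesis in the paper's direction and smoothness of $\partial\calK_p$ is decisive in yours.
\end{itemize}
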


\begin{proof}
This is a mere consequence of the implicit functions theorem. Let us assume that $\frakg$ is analytic and prove that $\fraks$ is also analytic.
The converse implication will follow directly by the fact that polar duality is an involution in the case of strictly convex bodies.

By definition,  
\begin{equation}\label{equ:def_support}
\fraks(p,x) = \max_{y \in \partial \calK_p} \langle x,y \rangle.
\end{equation}
Call $y(p, x)$ the maximizer in the last display, which is unique by strict convexity. 
Classical duality considerations imply that the maximum is realized when $x$ is normal to $\partial \calK_p$ at~$y(p,x)$.
The normal vector of  $\partial \calK_p$ at $y$ is $\nabla_y \frakg(p,y)$, which implies that $y(p, x)$ solves the following system (of unknown $(y,\lambda) \in \R^d \times \R$):
\begin{equation}
\begin{cases}
&\nabla_y \frakg(p, y) = \lambda x \\ &\frakg(p,y) = 1.
\end{cases}
\end{equation}

Define a function $F$ as follows:
\begin{equation}
F(p,x, y, \lambda) \coloneqq (\nabla_y \frakg(p, y) - \lambda x, \frakg(p,y) - 1).
\end{equation}
By assumption, $F$ is analytic, and we want to prove that the first coordinate of the solution $(y(p,x), \lambda(p,x)))$ to the equation $F(p, x, y(p,x), \lambda(p,x)) = (0,0)$ is analytic. 
To apply the analytic version of the implicit function theorem, we compute the Jacobian
\begin{equation}
D_{(y,\lambda)}F =
 \begin{pmatrix}
D^2_{yy} \frakg(p,y) & -x \\
\nabla_y\frakg(p,y)^{T} & 0 
\end{pmatrix} .
\end{equation}
As the minimizer satisfies $\nabla_y \frakg(p, y) = \lambda x$, the Jacobian at the minimizer reduces to 
\begin{equation}
D_{(y,\lambda)}F (y(p,x), \lambda(p,x)))=
 \begin{pmatrix}
D^2_{yy} \frakg(p,y) & -x \\
\lambda x^T & 0 
\end{pmatrix} .
\end{equation}

It remains to show that this matrix is invertible around $(p,x)$, when $x \neq 0$. 
This is formally a consequence of the positivity of the curvature of $\calK_p$ at the point $y \in \partial \calK_p$; we prove it in the ``pedestrian'' way by showing that its kernel is reduced to $\{0\}$.

Consider a pair $(u, \alpha) \in \R^d \times \R$ such that $D_{(y,\lambda)}F \begin{pmatrix}
u\\
\alpha
\end{pmatrix} 
 = 0$.
This yields the two equations
\[ 
\begin{cases}
D^2_{yy} \frakg(p,y) u - \alpha x = 0 \\
\langle x,u \rangle = 0.
\end{cases}
 \]
 
Multiplying the first line by $u ^T$ yields the equation $u^T D^2_{yy} \frakg(p,y) u - \alpha \langle u, x\rangle  = 0$, but $\langle u, x\rangle  = 0$ by the second line. 
Hence, we obtain  $u^T D^2_{yy} \frakg(p,y) u = 0$.
Remember that $\nabla_y \frakg(p, y) = \lambda x$. 
Thus, that $\langle x,u \rangle = 0$ implies that $u$ is tangent to $\calK_p$ at the point $y$. 
By the positive curvature condition, the quadratic form $u \mapsto u^T D^2_{yy} \frakg(p,y) u $ is positive definite on the set of tangent vectors to $\calK_p$. 

We obtain that $u = 0$, and that $\alpha = 0$.
Hence, the Jacobian of $F$ is invertible. 
By the implicit function theorem, we obtain that the solution $(y(p,x), \lambda(p,x)))$ of the equation $F(p, x, y(p,x), \lambda(p,x))=0$ is jointly analytic

Finally, inserting this input in the formula~\eqref{equ:def_support} implies that $\fraks$ is jointly analytic, which concludes the proof of the lemma.  
\end{proof}

\subsection{A sequence of local events with complex probabilities having accumulating zeroes}
\label{sec:counter_example}

Here, we give an example of a sequence of events $(A_n)_{n \geq 0}$ satisfying the following two properties:
\begin{enumerate}
\item For any $n \geq 0$, the event $A_n$ is local and $\phi_p[A_n] \neq 0$.
\item There exist $q \geq 1$, a given $p < p_c(q) $  and a sequence $z_n$ of complex numbers tending to 0 when $n$ tends to infinity such that, when $n$ is sufficiently large,
\begin{equation}
\phi_{p+z_n}[A_n] = 0,
\end{equation}
where $\phi_{p+z_n}[A_n]$ denotes the complex extension of $\phi_p[A_n]$ constructed in~\cite{dalimonte_analyticity_2026}.
\end{enumerate}

In particular, this shows that Theorem~\ref{thm:clust_exp_UDE} cannot be extended to general local events. This also means that the cluster expansion of~\cite{ott_ising,ott_weak_2020-1} cannot be done uniformly on the local event under consideration. Indeed, if it were the case, the cluster expansion theorem would ensure that the complex probabilities of any local event cannot vanish in a uniform neighbourhood of the subcritical parameter $p$, which contradicts the above-mentioned construction. This provides an a priori explanation for the need of the additional argument of~\cite{dalimonte_analyticity_2026} as compared with~\cite{ott_ising,ott_weak_2020-1}.

\vspace{15mm}
{\noindent \bf Construction.}\\

For sake of simplicity\footnote{It should be clear that the construction can be extended beyond this particular setting.}, we choose $d = 2$, $p = 1/2$ and take $q > 1$. Observe that $p < p_c(q)$ since $p_c(q) = \tfrac{\sqrt q}{1+\sqrt q}$. Let $n < N$ be two integers, and define the following sets of edges (we refer to Figure~\ref{fig:UME_event_B} for illustration). 
\begin{itemize}
\item For $k \in \{1, n\}$, $a_k \coloneqq \{ ( k, 0 ), (k,1) \}$. Those are the red edges of Figure~\ref{fig:UME_event_B}.
\item $\calB_n$ is the set of edges bordering the rectangle surrounding the red edges. Those are the black edges of Figure~\ref{fig:UME_event_B}.
\item Finally, $\calC_n$ is the set of edges sharing an endpoint with the edges of $\calB_n$ and pointing towards the outside. They are depicted in blue in Figure~\ref{fig:UME_event_B}
\end{itemize}

Now, for $1 \leq k \leq n$, denote by $B_{k,n}$ the event that 
\begin{itemize}
	\item The edges $a_1, \dots, a_k$ are open and $a_{k+1}, \dots, a_n$ are closed.
	\item The edges of $\calB_n$ are open.
	\item The edges of $\calC_n$ are closed. 
\end{itemize}
Finally, set $A_n \coloneqq B_{1,n} \cup \cdots \cup B_{n,n} $.

Then, the event $A_n$ enjoys the locality and decoupling properties of UD-events (see Def.~\ref{def:UDE}), but not the uniqueness property. 
As a consequence, it follows that when $N \geq n$, there exists a function $\frakf_{N,n}$, well-defined on a neighbourhood of 0, such that one can write 
\begin{equation} 
Z_{\Lambda_N,p+z} \phi_{\Lambda_N,p+z}[A_n] = \frakf_{N,n}(z) \sum_{k=1}^{n} a_{z}^k,
\end{equation}
where $a_{z} = (p+ z)(1 - (p+z))^{-1}$.

\begin{figure}
\centering
\includegraphics[width = 0.9\textwidth]{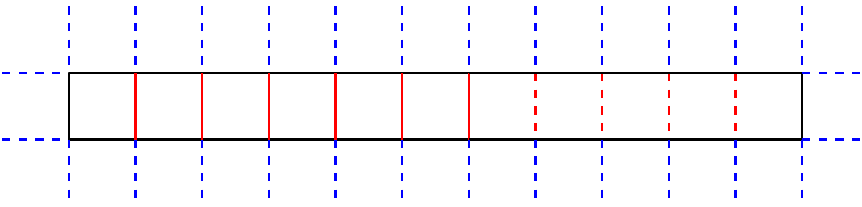}
	\caption{
	The event $B_{n,k}$ occurs if the edges are in the state prescribed by the picture. Plain edges are open whereas dashed edges are closed. The rectangle delimited by the black open edges contains $n$ edges shown in red. The $k$ first red edges are open and the $n-k$ remaining edges are closed. The closed blue edges surrounding the rectangle ``shield'' it so that $B_{n,k}$ lies in the class of UM-events.	 }
	\label{fig:UME_event_B}	
\end{figure}

Now, using a classical trigonometric formula, we observe that the choice $p=1/2$ 
and $z_n \coloneqq \frac{\mathrm{i}}{2}\tan(\frac\pi n)$ 
yields $\alpha_{z_n} = \e^{\frac{2\mathrm{i}\pi}{n}}$.

This in turns implies that $\phi_{ p+z_n}[A_n] = 0$.
It is straightforward that $z_n$ tends to 0 when $n$ tends to infinity, which concludes the announced construction of the counterexample. 

\vspace{5mm}

\end{document}